\newlist{myenumi}{enumerate}{1}
\setlist[myenumi,1]{label=\upshape(\roman*)}
\newlist{myenuma}{enumerate}{1}
\setlist[myenuma,1]{label=\upshape(\alph*)}
\declaretheorem[name=Theorem, numberwithin=section]{thm}
\declaretheorem[name=Theorem, numbered=no]{thm*}
\declaretheorem[name=Lemma,numberlike=thm]{lem}
\declaretheorem[name=Lemma,numbered=no]{lem*}
\declaretheorem[name=Corollary,numberlike=thm]{cor}
\declaretheorem[name=Proposition,numberlike=thm]{prop}
\declaretheorem[name=Definition,numberlike=thm, style=definition]{defi}
\declaretheorem[name=Setup,numberlike=thm, style=definition]{setup}
\declaretheorem[name=Conjecture,numberlike=thm, style=remark]{conj}
\declaretheorem[name=Example, numberlike=thm, style=remark]{ex}
\declaretheorem[name=Remark, numberlike=thm, style=remark]{rem}
\numberwithin{equation}{section}
\crefname{figure}{Figure}{Figures}
\crefname{table}{Table}{Tables}
\crefname{thm}{Theorem}{Theorems}
\crefname{lem}{Lemma}{Lemmas}
\crefname{defi}{Definition}{Definitions}
\crefname{setup}{Setup}{Setups}
\crefname{conj}{Conjecture}{Conjectures}
\crefname{cor}{Corollary}{Corollaries}
\crefname{prop}{Proposition}{Propositions}
\crefname{ex}{Example}{Examples}
\crefname{rem}{Remark}{Remarks}
\crefname{section}{Section}{Sections}
\crefname{subsection}{Subsection}{Subsections}
\crefname{chapter}{Chapter}{Chapters}
\crefname{appendix}{Appendix}{Appendices}
\title{Scalar and mean curvature comparison\\via the Dirac operator}
\author{Simone Cecchini\thanks{Funded by the Deutsche Forschungsgemeinschaft (DFG, German Research Foundation) through the Priority Programme ``Geometry at Infinity'' (SPP 2026, CE~393/1-1).}}
\affil{Mathematical Institute\\University of Göttingen, Germany\\\vspace{0.2cm}
email:~\href{mailto:simone.cecchini@mathematik.uni-goettingen.de}{simone.cecchini@mathematik.uni-goettingen.de}\\ url:~\href{https://simonececchini.org}{simonececchini.org}}
\author{Rudolf Zeidler\thanks{Funded by the Deutsche Forschungsgemeinschaft (DFG, German Research Foundation) – Project-ID 427320536 – SFB 1442, as well as under Germany’s Excellence Strategy EXC 2044  390685587, Mathematics Münster: Dynamics–Geometry–Structure, and through the Priority Programme ``Geometry at Infinity'' (SPP 2026, ZE~1123/2-2).}}
\affil{Mathematical Institute\\University of Münster, Germany\\\vspace{0.2cm}
email:~\href{mailto:math@rzeidler.eu}{math@rzeidler.eu}\\ url:~\href{https://www.rzeidler.eu}{www.rzeidler.eu}}
\date{}
\begin{document}

\maketitle
\begin{abstract} 
We use the Dirac operator technique to establish sharp distance estimates for compact spin manifolds under lower bounds on the scalar curvature in the interior and on the mean curvature of the boundary.
In the situations we consider, we thereby give refined answers to questions on metric inequalities recently proposed by Gromov.
This includes optimal estimates for Riemannian bands and for the long neck problem.
In the case of bands over manifolds of non-vanishing $\widehat{\mathrm{A}}$-genus, we establish a rigidity result stating that any band attaining the predicted upper bound is isometric to a particular warped product over some spin manifold admitting a parallel spinor.
Furthermore, we establish scalar- and mean curvature extremality results for certain log-concave warped products.
The latter includes annuli in all simply-connected space forms.
On a technical level, our proofs are based on new spectral estimates for the Dirac operator augmented by a Lipschitz potential together with local boundary conditions.
\end{abstract}
\newpage
\tableofcontents

\section{Introduction}
Manifolds of positive scalar curvature have been a central topic in differential geometry and topology in recent decades.
On complete spin manifolds, a particularly powerful tool in the study of positive scalar curvature metrics has been the spinor Dirac operator which facilitates a fruitful exchange between geometry and topology.
This technique exploits the tension between, on the one hand, the Schrödinger--Lichnerowicz formula
\[
  \ReducedSpinDirac^2 = \nabla^\ast \nabla + \frac{\scal}{4},  
\]
which implies invertibility of the spinor Dirac operator \(\ReducedSpinDirac\) in case the scalar curvature is uniformly positive and, on the other hand, index theory in the sense of Atiyah and Singer which in various situations yields differential-topological obstructions to invertibility.
Until recently, and with the notable exceptions of sharp Dirac eigenvalue estimates~\cite{Friedrich:DiracEigenwert,HOR02}, sharp \(\K\)-area estimates~\cite{LLarull,Goette-Semmelmann}, and approaches to the positive mass theorem based on an idea of Witten~\cite{Witten:positive_mass}, the strongest applications of the Dirac operator technique in positive scalar curvature geometry have been of a fundamentally \emph{qualitative} nature.
Indeed, there is a substantial body of celebrated literature addressing existence questions of positive scalar curvature metrics on a given manifold, or more generally, studying the topology of the space of positive scalar curvature metrics via the Dirac method; see~\cite{Lichnerwociz:Spineurs,GromovLawson:PSCDiracComplete,GromovLawson:Classification,Stolz:SimplyConnected,Botvinnik-Ebert-RW:InfiniteLoopSpaces} for a selection.
However, Gromov~\cite{Gromov:MetricInequalitiesScalar,gromovFourLecturesScalar2019} recently directed the focus towards more \emph{quantitative} questions and proposed studying the geometry of scalar curvature via various  metric inequalities which have similarities to classical Riemannian comparison geometry.
This resulted in a number of conjectures, a few of which we now recall.
\begin{conj}[{\cite[p.~87, \emph{Long neck problem}]{gromovFourLecturesScalar2019v3}}]\label{conj:long_neck}
Let $(M,g)$ be a compact connected \(n\)-dimensional Riemannian manifold with boundary such that its scalar curvature is bounded below by \(n(n-1)\).
Suppose that $\Phi \colon M\to S^n$ is a smooth area non-increasing map which is locally constant near the boundary.
If
\[
  \dist_g(\supp(\dd \Phi), \partial M) \geq \frac{\pi}{n},
\]
then the mapping degree of \(\Phi\) is zero.
\end{conj}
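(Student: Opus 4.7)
The plan is to prove the contrapositive by constructing a Callias-type Dirac operator that is simultaneously invertible --- via a Weitzenböck estimate augmented by a Lipschitz potential --- and of nonzero index, the latter detected by the degree of \(\Phi\). Assume \(\deg(\Phi) \neq 0\). Twist the spinor Dirac operator \(\Dop\) on \(M\) by the pullback \(E = \Phi^\ast \Sigma S^n\) of the spinor bundle of \(S^n\), producing the twisted Dirac operator \(\Dop_E\). Since \(\Phi\) is locally constant near \(\partial M\), the bundle \(E\) is canonically flat on the neck \(N = \{x \in M : \dist_g(x, \partial M) < \pi/n\}\), so the twisting curvature endomorphism \(\mathcal R^E\) appearing in the Lichnerowicz formula vanishes there; on \(\supp(\dd\Phi)\), Llarull's pointwise estimate together with the area non-increasing hypothesis yields \(\scal/4 + \mathcal R^E \geq 0\), which by itself is only barely non-negative and hence too weak to rule out a harmonic spinor.

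To exploit the genuine scalar curvature surplus \(\scal/4 \geq n(n-1)/4\) available on \(N\), I would introduce a Lipschitz potential \(\varphi \colon M \to [0, \infty)\) depending only on \(t(x) = \dist_g(x, \partial M)\), with \(\varphi(0) = 0\), \(\varphi\) solving a Riccati-type ODE of the shape \(\varphi' = \varphi^2 + c\) for the appropriate constant \(c\) on \([0, \pi/n]\), and \(\varphi\) extended as its terminal value beyond the neck. The sharp threshold \(\pi/n\) should correspond precisely to the blow-up time of this Riccati ODE, so that \(\varphi\) remains finite throughout \(M\) exactly up to the critical distance. I would then form the Callias-type operator \(\Dop_E + \varphi \Gamma\), where \(\Gamma\) is a fiberwise endomorphism anticommuting with Clifford multiplication, and impose a local chiral boundary condition at \(\partial M\) compatible with \(\Gamma\) and with the vanishing of \(\varphi\) on \(\partial M\).

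The Weitzenböck identity combined with the Llarull bound yields
\[
  \|(\Dop_E + \varphi\Gamma)\psi\|^2 \geq \int_M \Bigl(\tfrac{\scal}{4} + \mathcal R^E + \varphi^2 - |\nabla\varphi|\Bigr)|\psi|^2 + \int_{\partial M} \mathcal B(\psi),
\]
where \(\mathcal B(\psi) \geq 0\) by the choice of boundary condition. The Riccati ODE is arranged so that the bulk integrand is pointwise non-negative on \(N\) and strictly positive on \(\supp(\dd\Phi)\); unique continuation for the Dirac operator then propagates this vanishing and forces any kernel element to be identically zero. On the other hand, the relative index formula for Callias-type operators with local boundary conditions --- the spectral machinery built on Lipschitz potentials advertised in the abstract --- identifies the index of the boundary-value problem for \(\Dop_E + \varphi\Gamma\) with a nonzero multiple of \(\deg(\Phi)\), producing the desired contradiction.

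The main obstacle is the fine calibration of the three pieces of data --- the twisting bundle \(E\), the Lipschitz potential \(\varphi\), and the local boundary condition --- so that all conspire to give sharp positivity at the critical distance \(\pi/n\). In particular, matching the Riccati blow-up time to exactly \(\pi/n\) requires a \emph{refined} Weitzenböck estimate exploiting the specific structure of the spinor bundle on \(S^n\) (rather than the naive \(\scal/4\) bound alone), and the compatibility of the local boundary condition with both the Callias potential and the index computation must be verified carefully so that the boundary contribution carries the correct sign and the degree of \(\Phi\) can still be read off from the index.
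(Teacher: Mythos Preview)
Your overall strategy aligns with the paper's---twisted Dirac operator, tangent-type Callias potential, local chiral boundary condition, Friedrich-refined Weitzenb\"ock estimate, index detecting $\deg(\Phi)$---but the potential is oriented the wrong way, and this breaks the argument at two coupled points. First, the involution $\Gamma$ (the paper's $\sigma$) can only be constructed where the twisting data trivializes, namely on the neck $M\setminus K$ with $K\supseteq\supp(\dd\Phi)$; for $\psi\,\Gamma$ to extend to a globally defined zeroth-order term one must have $\psi\equiv 0$ on $K$, not on $\partial M$. The paper accordingly sets $\psi(p)=\tfrac{n}{2}\tan\bigl(\tfrac{n}{2}\min(\dist(p,K),l)\bigr)$, so $\psi=0$ on $K$ and $\psi|_{\partial M}=\tfrac{n}{2}\tan(\tfrac{nl}{2})$. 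Your potential, vanishing at $\partial M$ and blowing up toward $K$, is neither bounded nor admissible in this sense. Second, and decisively, under the chiral boundary condition the boundary term is $\int_{\partial M}\bigl(\tfrac{n}{2}\mathrm{H}_g+s\psi\bigr)|u|^2$. The conjecture imposes no bound on $\mathrm{H}_g$, so with your choice $\psi|_{\partial M}=0$ there is nothing to make $\mathcal B\geq 0$. The paper's mechanism is exactly that $\psi|_{\partial M}\to\infty$ as $l\nearrow\pi/n$: for any fixed compact $(M,g)$ one chooses $l<\pi/n$ close enough to $\pi/n$ that both $l\leq\dist(\supp(\dd\Phi),\partial M)$ and $\tfrac{n}{2}\mathrm{H}_g+\psi|_{\partial M}\geq 0$ hold.

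Two further points. The paper does not twist by a single bundle $\Phi^\ast E_0$ but by a GL \emph{pair} $(E,F)=(\Phi^\ast E_0,\Psi^\ast E_0)$, where $\Psi$ agrees with $\Phi$ on the neck but factors through curves (so $F$ is flat); the involution $\sigma$ arises from the parallel isomorphism $E\cong F$ on the neck, and the index is the relative index on the double. A single twisting bundle does not supply $\Gamma$. Finally, strict positivity on $\supp(\dd\Phi)$ does not come from Llarull's estimate alone (which gives only $\geq 0$); the paper obtains strictness on the open set where the area-contraction constant is $<\tfrac12$, and then propagates $u=0$ by showing that $|\varphi(x)^{-1/2}u|$ is constant in the equality case---an argument specific to the structure of the potential, not generic unique continuation.
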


\begin{conj}[{\cite[11.12, Conjecture~D']{Gromov:MetricInequalitiesScalar}}]\label{conj:collar_width}
  Let \(X\) be a closed manifold of dimension \(n\) and such that \(X \setminus \{p_0\}\), \(p_0 \in X\), does not admit a complete metric of positive scalar curvature.
  Let \(M\) be the manifold with boundary obtained from \(X\) by removing an open ball around \(p_0\).
  Then for any Riemannian metric of scalar curvature \(\geq n(n-1) > 0\) on \(M\), the width of a geodesic collar neighborhood of \(\partial M\) is bounded above by \(\pi / n\).
\end{conj}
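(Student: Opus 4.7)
I would argue by contrapositive. Suppose $(M,g)$ carries a metric with $\scal_g \geq n(n-1)$ together with a geodesic collar neighborhood of $\partial M$ of width $r > \pi/n$. The aim is to build, from these data, a complete Riemannian metric of positive scalar curvature on $X \setminus \{p_0\}$, directly contradicting the standing topological hypothesis.

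First, I would use the geodesic collar to write $g = dt^2 + h_t$ on $U \cong \partial M \times [0,r]$. Fix $t_0 \in (\pi/n, r)$, let $M' \subset M$ be the compact region cut off at the level $\{t = t_0\}$, and observe that $X \setminus \{p_0\}$ is diffeomorphic to $M'$ with a half-infinite cylinder $\partial M \times [t_0, \infty)$ attached in place of the missing ball. It therefore suffices to equip this cylindrical end with a complete positive scalar curvature metric that matches $g$ to second order along $\partial M \times \{t_0\}$.

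Next, within the sublayer $[t_0 - \pi/n - \epsilon, t_0]$ of the collar, I would deform $g$ into a warped product $dt^2 + \phi(t)^2 h$ for a suitable reference metric $h$ on $\partial M$. The threshold length $\pi/n$ is dictated by the Sturm comparison for the ODE $\phi'' + n\phi \leq 0$, which follows from the scalar curvature bound via a concavity analysis of the warping function — in other words, the same ODE whose half-period realizes the model sphere $S^n$. Because $r > \pi/n$ strictly, there is a genuine margin on which the deformation can be performed smoothly while preserving $\scal > 0$ throughout. On the attached end, one then extends $\phi$ past $t_0$ to a strictly concave positive function decaying to zero. The warped product formula
\[
  \scal_{dt^2 + \phi^2 h} = \phi^{-2}\scal_h - 2(n-1)\phi^{-1}\phi'' - (n-1)(n-2)\phi^{-2}(\phi')^2
\]
then shows that the term $-2(n-1)\phi^{-1}\phi''$ can be arranged to dominate $\phi^{-2}\scal_h$, producing positive scalar curvature on the entire end regardless of the sign of $\scal_h$.

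The main obstacle will be the middle step: achieving a smooth, scalar-curvature-preserving interpolation from $g$ into warped product form while \emph{saturating} the sharp constant $\pi/n$. An elementary torpedo-type construction only yields a non-sharp collar bound; obtaining the predicted threshold requires either a precise analysis of the mean-curvature Riccati equation along the collar slices together with a rigidity dichotomy at the extremal profile $\sin(nt)$, or, more robustly, an approach via stable $\mu$-bubbles in the sense of Gromov: minimize the weighted area functional $\Sigma \mapsto \mathcal{H}^{n-1}(\Sigma) - \int \mu$ over hypersurfaces separating $\partial M$ from the inner region, with a Lipschitz weight $\mu$ adapted to the normal coordinate $t$, and use stability combined with the curvature bound to contradict the non-existence of a complete positive scalar curvature metric on $X \setminus \{p_0\}$ by induction on dimension. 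This latter avenue is especially attractive since it uses only the topological hypothesis as stated, with no spin or index-theoretic input.
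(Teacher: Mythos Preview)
This statement is recorded in the paper as a \emph{conjecture} of Gromov; the paper does not prove it in the stated generality. What the paper does prove is \cref{CollarInfiniteKArea}, a refinement under the additional hypotheses that $M$ is spin and its double has infinite $\Ahat$-area, via a Callias-type Dirac operator with local boundary conditions and a tangent-shaped potential. So there is no ``paper's own proof'' of the conjecture to compare against, and your task is really to prove an open problem.

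Your argument has a genuine gap at precisely the step you flag. The collar metric is $g = \D t^2 + h_t$ with an \emph{arbitrary} family $h_t$; the Sturm-type inequality $\phi'' + n\phi \leq 0$ you invoke is a feature of warped products, not of general collars, so it gives no mechanism for deforming $g$ into $\D t^2 + \phi(t)^2 h$ while keeping $\scal > 0$. Without that deformation you have no control over the reference metric $h$ on $\partial M$, and hence the extension step also breaks: the term $-2(n-1)\phi^{-1}\phi''$ cannot be made to dominate $\phi^{-2}\scal_h$ on an infinite cylinder when $\scal_h$ is uncontrolled (and note that a strictly concave positive function on $[t_0,\infty)$ decaying to zero does not exist --- concavity forces either monotone increase or a zero in finite time). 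More fundamentally, the hypothesis ``$X\setminus\{p_0\}$ admits no complete PSC metric'' is a bare non-existence statement; all known routes to the sharp constant replace it by something one can actually compute with --- a spin/index obstruction as in this paper, or a specific topological class (tori, aspherical manifolds, Schoen--Yau classes) that feeds a $\mu$-bubble induction. Your final paragraph proposes the $\mu$-bubble alternative, but that is a different argument from the one you sketched, and it too does not deliver the conjecture from this hypothesis alone.
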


\begin{conj}[{\cite[11.12, Conjecture~C]{Gromov:MetricInequalitiesScalar}}]\label{conj:band-width}
  Let \(M\) be a closed connected manifold of dimension \(n-1\neq 4\) such that \(M\) does not admit a metric of positive scalar curvature.
  Let \(g\) be a Riemannian metric on \(V = M \times [-1,1]\) of scalar curvature bounded below by \(n(n-1) = \scal_{\Sphere^n}\).
  Then
  \[
    \width(V, g) \leq \frac{2\pi}{n},  
  \]
  where  \(\width(V,g) \coloneqq \dist_g(\partial_- V, \partial_+ V)\) is the distance between the two boundary components of \(V\) with respect to \(g\).
\end{conj}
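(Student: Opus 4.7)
Since the Dirac-operator machinery of this paper requires a spin structure, I propose a proof of Conjecture~\ref{conj:band-width} under the additional hypothesis that $M$ is spin with nonvanishing higher index (e.g., $\widehat{A}(M)\neq 0$, or more generally a nonzero Rosenberg index in $\mathrm{KO}_{n-1}(\mathrm{C}^{*}\pi_{1}M)$); the dimension restriction $n-1\neq 4$ hints that the original statement is recovered from this case via the Gromov--Lawson--Rosenberg program of Stolz.

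Argue by contradiction: assume $\scal_{g}\ge n(n-1)$ and $\width(V,g)>2\pi/n$. Let $\phi\colon V\to[-L,L]$ be the signed $g$-distance from the central slice $M\times\{0\}$, so $2L=\width(V,g)>2\pi/n$. Consider the spin Dirac operator $\Dop$ on $V$, twisted by the pullback of a bundle representing the higher index of $M$, and impose at $\partial V$ local boundary conditions given by a chirality involution $\epsilon$ anticommuting with $\Dop$; in the appropriate Hilbert-$\mathrm{C}^{*}$-module sense, $\Dop$ becomes self-adjoint and Fredholm. Form the Callias-type operator
\[
  \Dop_{f} \coloneqq \Dop + f\,\epsilon,\qquad f(x)\coloneqq \tfrac{n}{2}\tan\!\bigl(\tfrac{n}{2}\phi(x)\bigr),
\]
which is defined on the sub-band $\{|\phi|<\pi/n\}\subset V$ and, by the contradiction hypothesis, blows up strictly inside $V$, effectively creating Dirichlet-type behavior at the two intermediate walls $\{\phi=\pm\pi/n\}$. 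The Weitzenböck identity
\[
  \Dop_{f}^{2} = \Dop^{2} + f^{2} + c(\nabla f)\,\epsilon,
\]
combined with $\Dop^{2}\ge\scal/4$, the pointwise bound $|c(\nabla f)\,\epsilon|\le|\nabla f|$, and the ODE $f'=f^{2}+n^{2}/4$, yields a borderline Schrödinger--Lichnerowicz estimate which the paper's new spectral estimate for Dirac with Lipschitz potentials and local boundary conditions promotes to genuine invertibility of $\Dop_{f}$.

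The contradiction then comes from a Callias-type index theorem: in the local-boundary-condition framework, $\operatorname{ind}(\Dop_{f})$ is identified with the higher index of $\Dop_{M}$ on the zero level set $\{f=0\}=M\times\{0\}$, which is nonzero by hypothesis.

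The hardest step will be the sharpness of the spectral estimate: the tangent potential saturates the naïve Schrödinger--Lichnerowicz computation by an amount of order $n$ at the critical threshold (reflecting that $\scal=n(n-1)$ falls short of the formal target $n^{2}$ by $n$), so closing this gap requires exploiting an additional curvature contribution, most naturally via a Llarull-type enhancement of the twisting bundle. Marrying such a sharp analytic estimate to a $\mathrm{C}^{*}$-algebraic Callias-type index formula compatible with local boundary conditions constitutes the analytic heart of the argument.
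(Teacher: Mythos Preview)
First, note that the statement is a \emph{conjecture} in the paper, not a theorem; the paper does not prove it in full generality but establishes it (in strict form, \(\width(V,g)<2\pi/n\)) for spin bands of \emph{infinite vertical \(\Ahat\)-area}, see \cref{band-estimate} and \cref{cor:total-band-estimate}. Your restriction to the spin case with a nonvanishing index-theoretic invariant is therefore appropriate; your remark about higher index theory and \(\mathrm{C}^*\)-coefficients is reasonable but the paper deliberately avoids this and works with classical finite-rank bundles (at the cost of a parity restriction).

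Your overall architecture---Callias operator with tangent potential, chiral local boundary conditions, Weitzenb\"ock/Schr\"odinger--Lichnerowicz estimate, and a Callias-type index identification with the boundary index---matches the paper's. However, two points in your proposal diverge from what actually works.

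\textbf{The sharp constant.} You correctly identify that the na\"ive Lichnerowicz bound \(\Dirac^2\geq\scal/4\) falls short by a factor of \(n/(n-1)\), but your proposed fix via a ``Llarull-type enhancement of the twisting bundle'' is not what closes the gap. The paper instead uses the \emph{Friedrich inequality} \(|\nabla u|^2\geq\tfrac{1}{n}|\Dirac u|^2\) (equivalently, the Penrose operator decomposition \labelcref{eq:Penrose-Dirac}), which upgrades the integrated Weitzenb\"ock identity to carry the curvature term with coefficient \(\tfrac{n}{n-1}\cdot\tfrac{\scal}{4}\); see \cref{lem:Dirac-Penrose-Mean-curvature} and \cref{thm:callias-estimate-with-boundary}. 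With \(\scal\geq n(n-1)\) this yields exactly \(n^2/4\), which matches the ODE \(f'-f^2=n^2/4\). No extra twisting is needed for sharpness; the twisting bundle is only there to make the index nonzero.

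\textbf{The treatment of the potential.} You propose to let \(f(\phi)=\tfrac{n}{2}\tan(\tfrac{n}{2}\phi)\) blow up at the intermediate walls \(\{\phi=\pm\pi/n\}\subset\interior{V}\), ``effectively creating Dirichlet-type behavior''. This is not how the paper proceeds, and making your heuristic rigorous would require substantial additional work (self-adjointness, Fredholmness, and an index computation for an operator with a singular potential on a noncompact domain). The paper instead works on all of \(V\) with the genuine boundary \(\partial V\): assuming \(\width(V,g)>t_+-t_-\), it chooses a smooth width function into a slightly larger interval and a \emph{bounded} rescaled potential \(f_{\lambda_0}\) with \(\lambda_0<1\), so that \(f_{\lambda_0}^2-f_{\lambda_0}'\geq -\lambda_0^2 n^2/4\) strictly beats \(-n^2/4\). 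The mean-curvature/boundary term in \cref{thm:callias-estimate-with-boundary} is then nonnegative (any finite lower bound on \(\mean_g\) suffices once \(l\) is chosen close enough to \(2\pi/n\)), and the interior term is uniformly positive, forcing invertibility and hence vanishing index---contradicting \cref{ex:band-index-conputation}.
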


Note that we have strengthened the bounds in \cref{conj:collar_width,conj:long_neck} compared to the original sources.
All these constants are optimal, as we shall discuss below.

Gromov's first definitive result on these questions~\cite{Gromov:MetricInequalitiesScalar} was a proof of \cref{conj:band-width} for the torus and related manifolds via the geometric measure theory approach to positive scalar curvature going back to the minimal hypersurface method of~\citeauthor{SchoenYau:HypersurfaceMethod}~\cite{SchoenYau:HypersurfaceMethod}.
In fact, it initially appeared that the Dirac operator technique was not suitable to such quantitative questions, in particular because they involve manifolds with boundary.
However, in recent articles of the authors~\cite{Zeidler:band-width-estimates,Cecchini:LongNeck,Zeidler:WidthLargeness}, we have demonstrated that the Dirac operator method can in principle be used to approach \cref{conj:band-width,conj:collar_width,conj:long_neck}.
A slightly different Dirac operator approach based on quantitative K-theory leading to similar (non-sharp) estimates for bands was subsequently given by~\citeauthor{Guo-Xie-Yu:Quantitative}~\cite{Guo-Xie-Yu:Quantitative}.

In the present article, we advance the spinor Dirac operator method further and put forward a novel point of view towards \cref{conj:band-width,conj:collar_width,conj:long_neck} which brings the mean curvature of the boundary into the focus of attention.
That is, under the assumption that the scalar curvature is bounded below by $n(n-1)$ and that suitable index-theoretic invariants do not vanish, we establish a precise quantitative relationship between the mean curvature of the boundary and the relevant distance quantity appearing in situations related to \cref{conj:band-width,conj:collar_width,conj:long_neck}.
More precisely, we show that in each case there exist constants $c_n(l) > 0$, depending on a distance parameter $l > 0$ and the dimension of the manifold, such that if the mean curvature is bounded below by $-c_n(l)$, then the relevant distance is at most \(l\).
The crucial property of these constants is that $c_n(l)\to\infty$ as $l$ approaches the conjectured distance bound.
In other words, as the relevant distance tends to the threshold, the mean curvature tends to \(-\infty\) somewhere at the boundary.
The geometric intuition behind this behavior is that the metric must collapse as the critical threshold is approached.
Moreover, our new point of view allows us to establish rigidity results for certain extremal cases of the predicted quantitative relationship between scalar curvature, mean curvature and distance.

On a technical level, one ingredient is to augment the spinor Dirac operator---similarly as in our previous approaches---by a potential defined in terms of a distance function.
This procedure modifies the classical Schrödinger--Lichnerowicz formula in a way that allows to relate distance estimates to spectral properties of the modified operator.
However, the crucial new ingredient is that we study a tailor-made boundary value problem associated to the augmented Dirac operator. 
This enables us to use spinorial techniques to not only quantitatively control the scalar curvature using a differential expression of the potential but also bring the mean curvature of the boundary into play.
The main principle behind our new approach is that we can compare certain spin manifolds to model spaces which are suitable warped products, provided that one can produce a non-trivial solution of a boundary value problem associated to the augmented Dirac operator on the given manifold.
Moreover, up to a constant, the potential directly corresponds to the mean curvature of the cross sections in the model warped product space.

We develop this approach in a general setting that allows to treat the results related to \cref{conj:band-width,conj:collar_width,conj:long_neck} as well as further novel results in an essentially unified way.
To this end, we introduce a new abstract geometric structure which we call a \emph{relative Dirac bundle}.
This is a Dirac bundle \(S \to M\) in the sense of Gromov and Lawson~\cite[Section~1]{GromovLawson:PSCDiracComplete} (see also \cite{LawsonMichelsohn:SpinGeometry}) together with a suitable bundle involution \(\RelDiracInv \in \Ct^\infty(M \setminus K, \End(S))\) which is defined outside a compact subset \(K \subset \interior{M}\) of the interior of the manifold; see~\cref{sec:relative Dirac bundles} for details.
The use of this structure is twofold: Firstly, together with a suitable function \(\psi \colon M \to \R\), it allows to define the potential term necessary for the precise quantitative estimates.
This leads to the \emph{Callias\footnote{We use this terminology because the study of Dirac operators with potential was initiated by Callias~\cite{Callias}.} operator}
\[
  \Callias_\psi = \Dirac + \psi \RelDiracInv,
  \] 
where \(\Dirac\) is the Dirac operator associated to the Dirac bundle \(S\).
Secondly, the involution \(\RelDiracInv\) can be used to define natural chiral boundary conditions, which are crucial for the development of a suitable index theory for relative Dirac bundles (see~\cref{sec:index-theory}) and allow spectral estimates for \(\Callias_\psi\) (see~\cref{sec:curvature-estimates}) relating the mean curvature with the value of the function \(\psi\) along the boundary.
These boundary conditions are related to the treatment of the cobordism theorem via a boundary value problem as in~\cite[Section~21]{Boos-Bavnbek-Wojciechowski:Elliptic-boundary-Dirac} and \cite[Section~6.3]{Baer-Ballmann:Guide-Boundary-value-problems-Dirac}.
Our chirality also allows for an auxiliary choice of sign for each boundary component reminiscent of the boundary conditions considered by \citeauthor{Freed:Index-thms-odd-dim}~\cite{Freed:Index-thms-odd-dim}.
This additional choice will be relevant in the proofs of our results related to \cref{conj:band-width}.

A further notable observation is that our construction has a vague formal similarity to \emph{\(\mu\)-bubbles} or \emph{generalized soap bubbles}, which have recently led to substantial advances via the geometric measure theory approach to scalar curvature, see~\cite[Section~5]{gromovFourLecturesScalar2019}, \cite{Zhu:WidthEstimates,Zhu:RigdityComplete,chodosh_li,Gromov:5d,LUY2021-PositiveMass}.
Indeed, the latter can be viewed as an augmentation of the minimal hypersurface method by suitable potentials.

In the following \cref{subsec:intro_length_of_neck,subsec:intro_bands,subsec:intro-rigidity} of the introduction, we present a simplified overview of our main results.
In the main body of the article, these are derived by working with a suitable relative Dirac bundle and choosing a potential appropriate for the situation at hand.

\subsection{Length of the neck}\label{subsec:intro_length_of_neck}
Here we present our main geometric results related to \cref{conj:collar_width,conj:long_neck}.
We improve the upper bound of \(\pi /n\) to an estimate depending on the mean curvature of the boundary.

In our first result we estimate the length of the neck of a Riemannian manifold with boundary. 
Recall that for a smooth map of Riemannian manifolds $\Phi\colon M\to N$, the \emph{area contraction constant} at \(p \in M\) is defined to be the norm of the induced map \(\Phi_\ast \colon \Lambda^2 \T_p M \to \Lambda^2 \T_{f(p)} N\) on \(2\)-vectors. We say the map is \emph{area non-increasing} if the area contraction constant is \(\leq 1\) at every point.
If \(M\) is compact and \(N\) is closed, both connected and oriented, where \(n = \dim M = \dim N \geq 2\), then a smooth map \(\Phi \colon M \to N\) that is locally constant near the boundary \(\partial M\) has a well-defined mapping degree \(\deg(\Phi) \in \Z\).\footnote{This can be defined as usual via counting the pre-image of a regular value with signs or, in cohomological terms, using the induced map \(\Z \cong \HZ^n(N; \Z) \cong \HZ^n(N, \Phi(\partial M); \Z) \xrightarrow{\Phi^\ast} \HZ^n(M, \partial M; \Z) \cong \Z\), where we use that \(\Phi(\partial M)\) is a finite set of points and \(n \geq 2\).}
Moreover, given a Riemannian manifold \((M,g)\), we denote the mean curvature of its boundary \(\partial M\) by \(\mean_g\) (or simply \(\mean\) if the metric is implicit); see also~\labelcref{eq:mean_convention} in~\cref{sec:relative Dirac bundles} for our sign- and normalization convention.

\begin{thm}[see \cref{sec:LongNeck}]\label{LongNeckProblem}
Let $(M,g)$ be a compact connected Riemannian spin manifold with non-empty boundary, $n=\dim M \geq 2$ even, and let $\Phi\colon M\to \Sphere^n$ be a smooth area non-increasing map.
Assume that \(\scal_g \geq n(n-1)\).
Moreover, suppose there exists $l\in \left(0,\pi/n\right)$ such that $\mean_g\geq-\tan(nl/2)$ and $\dist_g\bigl(\supp(\dd \Phi),\partial M\bigr) \geq l$ \parensup{compare \cref{fig:Long_neck}}.
Then $\deg(\Phi)=0$.
\end{thm}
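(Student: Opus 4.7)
The plan is to argue by contradiction via the Callias-operator machinery advertised in the introduction. Suppose $\deg(\Phi)\neq 0$. Form the twisted spinor bundle $S=\slashed{S}_M\otimes\Phi^\ast E$, where $\slashed{S}_M$ is the complex spinor bundle of $M$ and $E\to\Sphere^n$ is the $\Z/2$-graded Hermitian bundle whose difference class generates $\K^0(\Sphere^n)$, as used in Llarull's rigidity theorem. Because $\Phi$ is locally constant on $M\setminus K$ with $K=\supp(\dd\Phi)$, the pullback $\Phi^\ast E$ is canonically trivialised there, and the grading of $E$ induces a parallel self-adjoint involution $\RelDiracInv$ on $S|_{M\setminus K}$ anticommuting with Clifford multiplication. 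This equips $S$ with the structure of a relative Dirac bundle in the sense of \cref{sec:relative Dirac bundles}, and the index theory of \cref{sec:index-theory} identifies the index of the Callias operator $\Callias_\psi=\Dirac+\psi\RelDiracInv$, equipped with the local chiral boundary condition determined by $\RelDiracInv$, as a non-zero multiple of $\deg(\Phi)$. By assumption this index is non-zero, so there exists a non-trivial $\sigma\in\ker\Callias_\psi$ satisfying the boundary condition.

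For the potential I would take $\psi(x)=h(\dist_g(x,\partial M))$, where $h\colon[0,\infty)\to\R$ is a smooth, non-increasing profile that vanishes on $[l,\infty)$ and on $(0,l)$ is prescribed by the model ODE underlying the spectral estimate of \cref{sec:curvature-estimates}---concretely, a tangent-type function of the shape $h(t)\propto\tan\!\bigl(\tfrac{n}{2}(l-t)\bigr)$. Two features are crucial: $\psi$ vanishes on a neighbourhood of $K$, so the twisting curvature endomorphism $\mathcal{R}^E$ is supported where $\psi=\nabla\psi=0$; and $h(0)$ is calibrated to match the prescribed bound $\mean_g\geq-\tan(nl/2)$. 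Applying the Schrödinger--Lichnerowicz formula for $\Callias_\psi^2$ to $\sigma$ and integrating yields
\[
0=\int_M|\Callias_\psi\sigma|^2=\int_M|\nabla\sigma|^2+\int_M\mathcal{K}\,|\sigma|^2+\int_{\partial M}\mathcal{B}\,|\sigma|^2,
\]
with bulk endomorphism $\mathcal{K}=\tfrac{\scal_g}{4}+\mathcal{R}^E+\psi^2+c(\nabla\psi)\RelDiracInv$ and, once the correct sign of the chiral boundary condition is fixed, a boundary integrand $\mathcal{B}$ of the form $\tfrac{1}{2}\bigl(\mean_g+2\psi|_{\partial M}\bigr)$. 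On $K$, the Llarull bound $\mathcal{R}^E\geq-\tfrac{n(n-1)}{4}$ combines with $\scal_g\geq n(n-1)$ and $\psi\equiv 0$ to give $\mathcal{K}\geq 0$; on the collar $M\setminus K$, where $\mathcal{R}^E=0$, the profile ODE for $h$ arranges $\mathcal{K}\geq 0$ pointwise as an endomorphism; and $\mathcal{B}\geq 0$ by the mean-curvature hypothesis together with the calibration of $h(0)$. Every term therefore vanishes: from $\nabla\sigma\equiv 0$, $|\sigma|$ is constant on the connected manifold $M$, and combining $\Callias_\psi\sigma=0$ with the automatic identity $\Dirac\sigma=0$ (valid because $\sigma$ is parallel) forces $\psi\RelDiracInv\sigma\equiv 0$, hence $\sigma\equiv 0$ on the open set $\{\psi>0\}$, which contains a collar of $\partial M$. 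This contradicts the non-triviality of $\sigma$ and yields $\deg(\Phi)=0$.

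The main obstacle I anticipate is the sharp pointwise non-negativity of $\mathcal{K}$ on the collar $M\setminus K$. Using only the operator-norm bound $\|c(\nabla\psi)\RelDiracInv\|=|h'|$, a naive Cauchy--Schwarz estimate produces $\mathcal{K}\geq\tfrac{\scal_g}{4}+h^2-|h'|$, and for a tangent-type profile satisfying an ODE of the form $|h'|=c^2+h^2$ this falls short of non-negativity by a fixed constant. The sharp argument has to exploit the algebraic structure of the self-adjoint involution $c(\nabla t)\RelDiracInv$ together with the $\pm 1$-eigenspace decomposition of the boundary chirality, working one derivative order lower at the level of $\Callias_\psi$ rather than $\Callias_\psi^2$. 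This refined estimate is precisely what the abstract spectral theorem of \cref{sec:curvature-estimates} is designed to deliver in its Lipschitz-potential form; once it is in hand, the present theorem reduces to verifying its hypotheses for the Llarull twisting, the tangent-type potential, and the chiral boundary condition described above.
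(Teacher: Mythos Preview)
Your overall strategy matches the paper's: assume $\deg(\Phi)\neq 0$, build the relative Dirac bundle from a Llarull-type GL pair supported on $K$, use a tangent-profile potential $\psi=\tfrac{n}{2}\tan(\tfrac{n}{2}\,\cdot\,)$ composed with a distance function, invoke the index computation of \cref{sec:index-theory} to produce a nontrivial kernel element, and derive a contradiction via the spectral estimate of \cref{sec:curvature-estimates}. You also correctly diagnose that the naive Bochner identity with $|\nabla\sigma|^2$ falls short by the constant $\tfrac{n^2}{4}-\tfrac{n(n-1)}{4}$, and that the Friedrich/Penrose improvement producing the factor $\tfrac{n}{n-1}$ in front of $\Curv$ is exactly what \cref{thm:callias-estimate-with-boundary} supplies.

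The genuine gap is in your treatment of the equality case. Once you invoke the refined estimate, the equality condition is \emph{not} $\nabla\sigma=0$ but the twistor equation $\Penrose\sigma=0$, i.e.\ $\nabla_\xi\sigma=-\tfrac{1}{n}\clm(\xi^\flat)\Dirac\sigma=\tfrac{\psi}{n}\clm(\xi^\flat)\RelDiracInv\sigma$. On the collar, where $\psi\neq 0$, this covariant derivative does not vanish, so $\sigma$ is not parallel, $|\sigma|$ is not constant, and your chain ``$\Dirac\sigma=0$ hence $\psi\RelDiracInv\sigma=0$ hence $\sigma=0$ on $\{\psi>0\}$'' collapses. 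Indeed, in the model warped product with $\varphi(t)=\cos(nt/2)^{2/n}$ the kernel sections satisfy $|\sigma|^2\propto\varphi(x)$, not a constant.

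The paper closes this gap by two further ideas you do not have. First, since $\Phi$ is area non-increasing and locally constant near $\partial M$, the intermediate value theorem gives a non-empty open set $U'=\{p\in\supp(\dd\Phi):a(p)<\tfrac12\}$; on $U'$ the Llarull bound sharpens to $\Curv^{E\oplus F}\ge -\tfrac{n(n-1)}{8}$, leaving a \emph{strictly} positive interior term $\tfrac{n^2}{4}-\tfrac{n^2}{8}$ in the inequality chain and forcing $\sigma|_{U'}=0$. Second, one shows that the rescaled section $w=\varphi(x)^{-1/2}\sigma$ has $\dd|w|^2=0$ on all of $M$ (this uses both equality conditions of \cref{thm:callias-estimate-with-boundary} together with \cref{rem:modified_section}, in particular the identity $\clm(\dd x)\sigma=\RelDiracInv\sigma$ outside $\supp(\dd\Phi)$). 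Combining $|w|$ constant with $w|_{U'}=0$ yields $\sigma\equiv 0$, the desired contradiction. Without these steps the borderline case $\dist_g(\supp(\dd\Phi),\partial M)=l$ cannot be excluded.
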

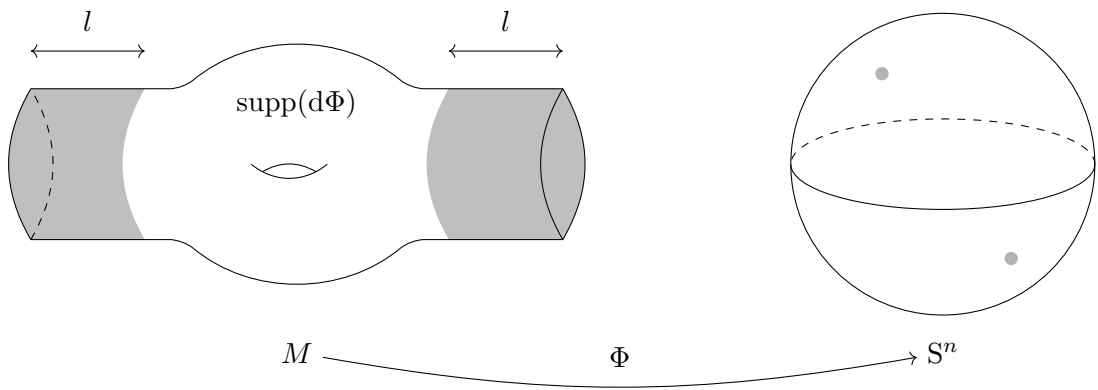
\begin{figure}[ht]
\begin{center}
\begin{tikzpicture}
\coordinate (A) at (0,-1);
\coordinate (B) at (0,1);
\coordinate (C) at (-7,1);
\coordinate (D) at (-7,-1);
\coordinate (M) at (5,0);

\fill [gray!50] (A) to [bend left=30] (B) to ++ (-1.5,0) to [bend right=30] ($(A) + (-1.5,0)$) -- cycle;
\draw[<->] ($(B) + (0,0.5)$) to node[label=above:{\(l\)}] {} ++ (-1.5,0);

\fill [gray!50] (D) to [bend left=30] (C) to ++ (1.5,0) to [bend right=30] ($(D) + (1.5,0)$) -- cycle;
\draw[<->] ($(C) + (0,0.5)$) to node[label=above:{\(l\)}] {} ++ (1.5,0);

\draw[fill=gray!50] (A) to [bend left=30] (B) to [bend left=30] (A);

\draw (D) to [bend left=30] (C);
\draw [dashed] (D) to [bend right=30] (C);

\draw [rounded corners=5] (A) to ++(-2,0) to [bend left=40]  ($ (D) + (2,0) $) to (D);
\draw [rounded corners=5] (B) to ++(-2,0) to [bend right=40]($ (C) + (2,0) $) to  (C);

\draw[bend left=40] (-3.1,-0) edge[name path=hd] (-4.1,-0);
\path[name path=hb] (-3.1,-0.2) to [bend right=40] (-4.1,-0.2);
\path[name intersections={of=hd and hb}];
\draw[bend right=30] (intersection-1) to (intersection-2);

\node at (-3.5,0.8) {\(\supp(\D \Phi)\)};
\node (Mlabel) at (-3.5,-2.5) {\(M\)};

\draw (M) circle (2);
\draw ($(M) + (-2,0)$) arc (180:360:2 and 0.6);
\draw[dashed] ($(M) + (2,0)$) arc (0:180:2 and 0.6);

\node[circle,fill=gray!60,minimum size=5pt,inner sep=0pt] (q1) at ($(M) + (-0.8,1.2)$) {};
\node[circle,fill=gray!60,minimum size=5pt,inner sep=0pt] (q2) at ($(M) + (0.9,-1.25)$) {};

\node (Spherelabel) at ($(M) + (0,-2.5)$) {\(\Sphere^n\)};

\draw[->] (Mlabel) to [bend right=10] node[label=above:{\(\Phi\)}] {} (Spherelabel);

\end{tikzpicture}
\caption{The long neck problem}\label{fig:Long_neck}
\end{center}
\end{figure}
The statement of~\cref{LongNeckProblem} is sharp; we discuss this in \cref{sec:LongNeck},~\cref{prop:long_neck_sharp}.
In this context, a subtle point hidden in the statement of the theorem is that we also rule out the equality situation \(\dist_g\bigl(\supp(\dd \Phi),\partial M\bigr) = l\) under these scalar- and mean curvature bounds if \(\deg(\Phi) \neq 0\).
This is in contrast to the situations of the other conjectures, where the equality situations can be realized, compare~\cref{rem:optimality} below.
Addressing this detail requires a considerably more precise analysis than in the earlier approach from~\cite[Theorem~A]{Ce20}.

\begin{cor}
Let $(M,g)$ be a compact connected Riemannian spin manifold with boundary, $n=\dim M \geq 2$ even, and let $\Phi\colon M\to \Sphere^n$ be a smooth area non-increasing map.
Assume that \(\scal_g \geq n(n-1)\).
If $\dist_g\bigl(\supp(\dd \Phi),\partial M\bigr) \geq \pi/n$, then \(\deg(\Phi) = 0\).
\end{cor}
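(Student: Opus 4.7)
The plan is to deduce this corollary from \cref{LongNeckProblem} via a direct limiting argument in which the mean curvature hypothesis becomes automatic by compactness, at the cost of replacing $\pi/n$ by a slightly smaller length parameter $l$. Since $M$ is compact, the boundary $\partial M$ is compact and the continuous function $\mean_g$ attains a minimum on $\partial M$; in particular there exists a finite constant $C \geq 0$ with $\mean_g \geq -C$ everywhere on $\partial M$.

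The crucial observation is that $\tan(nl/2) \to +\infty$ as $l \nearrow \pi/n$, so the admissible lower bound $-\tan(nl/2)$ for the mean curvature becomes arbitrarily lax as $l$ approaches $\pi/n$ from below. I would therefore pick $l \in (0,\pi/n)$ close enough to $\pi/n$ that $\tan(nl/2) \geq C$; for such a choice one automatically has $\mean_g \geq -C \geq -\tan(nl/2)$. The distance hypothesis is preserved for free, since $\dist_g(\supp(\dd\Phi),\partial M) \geq \pi/n > l$. All hypotheses of \cref{LongNeckProblem} then hold for this $l$, and it yields $\deg(\Phi)=0$.

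I do not anticipate any substantive obstacle. The real content has been pushed into \cref{LongNeckProblem}, whose quantitative refinement was designed precisely so that the mean curvature term can be absorbed into the length parameter, making limiting corollaries of this type essentially immediate. The only bookkeeping point worth noting is that the statement tacitly assumes $\partial M \neq \emptyset$, as needed both for \cref{LongNeckProblem} to apply and for the distance condition to carry any nontrivial content.
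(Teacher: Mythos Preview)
Your proposal is correct and follows exactly the paper's approach: the paper's proof is a single sentence noting that the corollary is a direct consequence of \cref{LongNeckProblem} because $\tan(nl/2)\to\infty$ as $l\to\pi/n$, which is precisely the compactness-and-limit argument you spell out in detail.
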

This corollary is a direct consequence of of \cref{LongNeckProblem} because $\tan(nl/2)\to\infty$ as $l\to \pi/n$.
Thus this refines the original approach to~\cref{conj:long_neck} from \cite[Theorem~A]{Ce20}.
\Cref{prop:long_neck_sharp} also shows that the constant \(\pi/n\) is optimal here.

For our second result, we introduce the notion of $\Ahat$-area, which is a generalization of the notion of $\K$-area introduced in~\cite[\S 4]{Gromov:KArea}.
For a Hermitian bundle $E$, denote by $\FullCurv^E$ the curvature of the connection on $E$.

\begin{defi}\label{def:Ahat area}
    Let \((X,g)\) be a closed even-dimensional oriented Riemannian manifold.
    The \emph{\(\Ahat\)-area} of \((X, g)\) is the infimum of the numbers 
    \(
        \|\FullCurv^E\|_\infty^{-1},    
    \)
    ranging over all Hermitian bundles \(E\) with metric connections such that
    \(
        \int_X \AhatClass(X) \wedge \ch(E) \neq 0
    \), where \(\AhatClass(X)\) denotes the \(\Ahat\)-form of \(X\) and \(\ch(E)\) denotes the Chern character form of $E$.
\end{defi}
The \emph{\(\Ahat\)-area} of \((X, g)\) depends on the metric $g$.
However, since $X$ is compact, the notion of having \emph{infinite $\Ahat$-area} is independent of $g$.
  A closed spin manifold of infinite K-area also has infinite $\Ahat$-area.
  An important class of examples consists of even-dimensional compactly enlargeable manifolds, e.g.\ the $2m$-dimensional torus $\Torus^{2m}$.
  For the notion of enlargeability and more examples, we refer to \cite{gromov-lawson:spin-and-scalar-in-presence,GromovLawson:PSCDiracComplete}.
  More generally, if $X$ has infinite $\K$-area and $Y$ has nonvanishing $\Ahat$-genus, then $X\times Y$ has infinite $\Ahat$-area.
  This includes, for example, the Cartesian product $\Torus^{2m}\times Y$, where $Y$ is the K3-surface.

Now suppose $X$ is a closed $n$-dimensional enlargeable spin manifold.
By \cite[Theorem~C]{Cecchini:LongNeck}, \(X \setminus \{p_0\}\), \(p_0 \in X\), does not admit a complete metric of positive scalar curvature.
Moreover, the double of $M\coloneqq X\setminus \Ball^n$, with $\Ball^n$ an open $n$-ball embedded in $X$, is enlargeable as well.
Therefore, the next theorem in particular refines the upper bound of \cref{conj:collar_width} in the case of even-dimensional enlargeable spin manifolds using information from the mean curvature of the boundary.

\begin{thm}[see \cref{S:K-area}]\label{CollarInfiniteKArea}
Let $(M,g)$ be a compact connected $n$-dimensional Riemannian spin manifold with boundary such that the double of $M$ has infinite $\Ahat$-area.
Suppose that $\scal_g>0$ and that there exist positive constants $\kappa$ and $l$, with $0 < l < \pi/(\sqrt\kappa n)$, such that
\[
    \mean_g\geq -\sqrt{\kappa}\tan\left(\frac{\sqrt\kappa nl}{2}\right).
\]
Then the boundary \(\partial M\) admits no open geodesic collar neighborhood \(\mathcal{N} \subseteq M\) of width strictly greater than \(l\) such that \(\scal_g \geq \kappa n(n-1)\) on \(\mathcal{N}\).
\end{thm}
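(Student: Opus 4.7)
The plan is to argue by contradiction via a Callias-type operator on a relative Dirac bundle. Suppose for contradiction that $\mathcal{N}\subseteq M$ is an open geodesic collar of width $L>l$ with $\scal_g\ge\kappa n(n-1)$ on $\mathcal{N}$. By the strict inequalities in the hypotheses and continuity of $\tan$, pick $l'\in(l,\min\{L,\pi/(\sqrt\kappa n)\})$ such that $\mean_g\ge-\sqrt\kappa\tan(\sqrt\kappa nl'/2)$ and $\mathcal{N}\supseteq\{x:\dist_g(x,\partial M)\le l'\}$; this gives a small amount of slack to absorb error terms. Since the double $\widetilde M=M\cup_{\partial M}M$ has infinite $\Ahat$-area, for every $\epsilon>0$ one can find a Hermitian bundle $\widetilde E\to\widetilde M$ with metric connection such that $\|\FullCurv^{\widetilde E}\|_\infty<\epsilon$ and $\int_{\widetilde M}\AhatClass(\widetilde M)\wedge\ch(\widetilde E)\ne 0$. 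Restricting to $E=\widetilde E|_M$ and combining with the canonical $\Z/2$-symmetry of $\widetilde M$ yields, as in \cref{sec:relative Dirac bundles}, a relative Dirac bundle $S_M\otimes E$ with bundle involution $\RelDiracInv$.

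Next I would consider the Callias operator $\Callias_\psi=\Dirac+\psi\RelDiracInv$ with the chirality boundary conditions from \cref{sec:index-theory}, where $\psi$ is the Lipschitz function of the boundary distance $t=\dist_g(\cdot,\partial M)$ defined by
\[
\psi(t)=\tfrac{n\sqrt\kappa}{2}\tan\!\left(\tfrac{n\sqrt\kappa}{2}(l'-t)\right)\text{ for }t\le l',\qquad\psi\equiv 0\text{ for }t\ge l'.
\]
This $\psi$ satisfies the Riccati-type identity $-\psi'-\psi^2=\tfrac{n^2\kappa}{4}$ along the inward normal coordinate on the collar, and its boundary value $\psi|_{\partial M}=\tfrac{n\sqrt\kappa}{2}\tan(\sqrt\kappa nl'/2)$ is calibrated to match the hypothesis on $\mean_g$. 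The index theorem of \cref{sec:index-theory} identifies the Fredholm index of $\Callias_\psi$ with $\int_{\widetilde M}\AhatClass(\widetilde M)\wedge\ch(\widetilde E)\ne 0$, and hence produces a non-trivial kernel element $u$ (for $\epsilon$ small enough that the index remains non-zero after perturbation by the potential).

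Plugging $u$ into the Schrödinger--Lichnerowicz-type identity of \cref{sec:curvature-estimates}---which combines a Friedrich-type twistor improvement with the contributions of the Callias potential and of the boundary term determined by $\mean_g$ and $\psi|_{\partial M}$---then yields an integral identity in which the Riccati relation reduces the bulk integrand to an error of size $O(\epsilon)$ controlled by $\|\FullCurv^{\widetilde E}\|_\infty$, while the boundary integrand is non-negative by the mean-curvature calibration. Taking $\epsilon\to 0$ forces $u\equiv 0$, a contradiction.

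The main obstacle is the quantitative matching between the pointwise bulk bound $\scal_g\ge\kappa n(n-1)$ and the Riccati constant $n^2\kappa/4=\tfrac{n}{n-1}\cdot\tfrac{\kappa n(n-1)}{4}$: this gap, invisible at the classical Lichnerowicz level, has to be compensated by the Friedrich-type improvement encoded in the spectral estimates of \cref{sec:curvature-estimates}. It is precisely this improvement that allows the sharp collar-width bound $\pi/(\sqrt\kappa n)$ to appear, rather than the weaker $2\pi/\sqrt{\kappa n(n-1)}$ that a naive Lichnerowicz argument would produce. Verifying that the spectral identity on a manifold with boundary still interacts correctly with the Riccati identity and the calibration at $\partial M$---and in particular that the boundary term is precisely $\psi+\tfrac{\mean_g}{n}$ (up to the Friedrich factor), not $\psi+\tfrac{\mean_g}{2}$---is the delicate technical point that makes the mean-curvature exponent $\sqrt\kappa$ (rather than $\sqrt\kappa n$) in the hypothesis the correct one.
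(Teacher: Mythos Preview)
Your overall architecture matches the paper's: contradiction via a Callias operator on a GL-pair relative Dirac bundle, with a $\tan$-type potential in the collar and the spectral estimate of \cref{sec:curvature-estimates}. However, there is a genuine gap in the endgame. You use the \emph{exact} Riccati parameter $r=\tfrac{n\sqrt\kappa}{2}$, so that $\tfrac{n^2\kappa}{4}+\psi^2-|\psi'|=0$ on the collar, and then propose to ``take $\epsilon\to 0$'' to kill the $O(\epsilon)$ curvature error. But the kernel element $u=u_\epsilon$ depends on the bundle $E_\epsilon$; for each fixed $\epsilon>0$ the inequality you obtain is
\[
0\ \ge\ c_0\int_{K}|u_\epsilon|^2\ -\ C\epsilon\int_{\mathcal N}|u_\epsilon|^2\ +\ \delta\int_{\partial M}|u_\epsilon|^2,
\]
which is perfectly compatible with a nonzero $u_\epsilon$ concentrated in the collar. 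There is no common space in which the $u_\epsilon$ live, so you cannot pass to a limit, and the slack you built at the boundary (via $l'>l$) does not propagate into the bulk.

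The paper's remedy is precisely to spend the extra length in the collar on the \emph{bulk} estimate rather than on the boundary: choose $\Lambda\in(l,l')$ and a sub-extremal parameter $r<\tfrac{n\sqrt\kappa}{2}$ close enough to $\tfrac{n\sqrt\kappa}{2}$ that $Y_r(\Lambda)=r\tan(r\Lambda)$ still exceeds $\tfrac{n\sqrt\kappa}{2}\tan(\tfrac{n\sqrt\kappa l}{2})$. Then $\tfrac{n^2\kappa}{4}+Y_r^2-|Y_r'|=\tfrac{n^2\kappa}{4}-r^2>0$ is \emph{strictly} positive on the collar, and one chooses a \emph{fixed} $\epsilon$ small enough to be absorbed both by this margin and by $\tfrac{\kappa_0}{4}=\tfrac14\inf_K\scal_g>0$ on the interior. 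This yields $\|\Callias_\psi u\|\ge c\|u\|$ for all $u$, contradicting the nonvanishing index. A secondary point: your sentence ``combining with the canonical $\Z/2$-symmetry of $\widetilde M$ yields a relative Dirac bundle $S_M\otimes E$'' does not describe a valid relative Dirac bundle; what is needed is a GL pair $(E,F)$ with support contained in the complement of the collar, produced from infinite relative $\Ahat$-area as in \cref{Rel-KArea-on-spin-mfd}, so that $\psi$ is allowed to be nonzero exactly where the involution $\sigma$ is defined.
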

The estimate in this theorem is also optimal; see~\cref{rem:optimality} below.

\begin{rem}
Similarly as before, one deduces from this that the case $l\geq \pi/{(n\sqrt\kappa)}$ is ruled out independently of mean curvature restrictions. 
This also follows from the techniques in~\cite[Theorem~B]{Ce20} and see the discussion in~\cite{perspectives-in-psc:generalized-callias}.
\end{rem}

In particular, \cref{CollarInfiniteKArea} implies that a manifold with boundary whose double has infinite $\Ahat$-area cannot carry any metric of positive scalar curvature and mean convex boundary.
This also follows from recent results of  \citeauthor{BH20}~\cite{BH20}.

\subsection{Estimates of bands}\label{subsec:intro_bands}
Here we exhibit our main results related to \cref{conj:band-width}.
Similarly as above, we are able to improve the upper bound of \(2 \pi /n\) to a bound depending on the mean curvature of the boundary.
We will formulate our result for manifolds which are not only cylinders.
We say that a \emph{band} is a compact manifold \(V\) together with a decomposition \(\partial V = \partial_- V \sqcup \partial_+ V\), where \(\partial_\pm V\) are unions of components.
This notion goes back to~\cite{Gromov:MetricInequalitiesScalar}, where such manifolds are called compact proper bands.
A map \(V \to V'\) is a \emph{band map} if it takes \(\partial_\pm V\) to \(\partial_\pm V'\).
The \emph{width} \(\width(V,g)\) of a Riemannian band \((V,g)\) is the distance between \(\partial_{-}V\) to \(\partial_{+} V\) with respect to \(g\).

We now focus on a class of bands to which our results apply and is simple to describe. 
An \emph{overtorical band}~\cite[Section~2]{Gromov:MetricInequalitiesScalar} is a band \(V\) together with a smooth band map \(V \to \Torus^{n-1} \times [-1,1]\) of non-zero mapping degree, where \(\Torus^{n-1}\) denotes the torus of dimension \(n-1\).
More generally, we can also consider \(\Ahat\)-overtorical bands~\cite{Zeidler:WidthLargeness}, which are defined similarly but replacing the usual mapping degree by the \(\Ahat\)-degree in the sense of~\cite[Definition~2.6]{gromov-lawson:spin-and-scalar-in-presence}.

\begin{thm}[{cf.~\cref{cor:symmetric-band-estimate,cor:half-band-estimate}}] \label{thm:intro-band-estimate}
    Let \(n\) be odd and \((V,g)\) be an \(n\)-dimensional \(\Ahat\)-overtorical spin band.
    Suppose that \(\scal_g \geq n(n-1)\).
    If the mean curvature of the boundary satisfies
    \begin{itemize}
        \item  either \(\mean_g \geq -\tan(n l / 4)\) for some \(0 < l < 2\pi/n\),
        \item or \(\mean_g|_{\partial_- V} \geq 0\) and \(\mean_g|_{\partial_+ V} \geq -\tan(n l / 2)\) for some \(0 < l < \pi/n\),
    \end{itemize}
    then \(\width(V, g) \leq l\).
\end{thm}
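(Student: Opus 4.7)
The plan is a proof by contradiction, combining a non-vanishing Fredholm index with a sharp spectral estimate that forces injectivity. Suppose $\width(V,g) > l$. I will construct a tailored Callias-type operator $\Callias_\psi = \Dirac + \psi\RelDiracInv$ on a relative Dirac bundle over $V$ whose Fredholm index under the chiral boundary conditions is non-zero (by $\Ahat$-overtoricality together with the index theory of relative Dirac bundles developed earlier), yet forced to be injective by a Bochner identity exploiting the assumed scalar- and mean-curvature bounds.

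For the relative Dirac bundle, I pull back a Hermitian bundle $E \to \Torus^{n-1}$ along the band map $\pi\colon V \to \Torus^{n-1}\times[-1,1]$, choosing $E$ with arbitrarily small curvature whose Chern character pairs non-trivially with $\AhatClass(\Torus^{n-1})$; this is possible because the torus has infinite $\K$-area. Twisting the spinor bundle of $V$ by $\pi^*E$ yields a Dirac bundle $S\to V$, and outside a compact set in $\interior(V)$ one defines the involution $\RelDiracInv$ so that it distinguishes collars of $\partial_-V$ and $\partial_+V$, with the paper's auxiliary sign choice for each boundary component allowing both cases of the theorem to be treated within one formalism. By the index theory of relative Dirac bundles, the Fredholm index of $\Callias_\psi$ on this setup is a non-zero multiple of the $\Ahat$-degree of $V$.

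The potential $\psi$ is modelled on the warped product $[-l/2, l/2]\times_f\Torus^{n-1}$ with $f(t)=\cos(nt/2)^{2/n}$, which has constant scalar curvature $n(n-1)$ and slice mean curvature $f'/f=-\tan(nt/2)$. Using the strict inequality $\width(V,g) > l$, I fix a $1$-Lipschitz function $\rho\colon V\to[-l/2,l/2]$ with $\rho|_{\partial_\pm V}=\pm l/2$ in Case~(i), respectively $\rho\colon V\to[0,l]$ with $\rho|_{\partial_-V}=0$ and $\rho|_{\partial_+V}=l$ in Case~(ii), and set $\psi(x)=\varphi(\rho(x))$ with $\varphi(t)=(n/2)\tan(nt/2)$. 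The key analytic property is the Riccati identity $\varphi'=\varphi^2+(n/2)^2$, and the boundary values $\psi|_{\partial_\pm V}=\pm(n/2)\tan(nl/4)$ in Case~(i), respectively $\psi|_{\partial_- V}=0$ and $\psi|_{\partial_+ V}=(n/2)\tan(nl/2)$ in Case~(ii), are calibrated precisely so that, under the assumed mean-curvature hypothesis, the boundary term in the integrated Bochner identity is pointwise non-negative and vanishes exactly at model equality.

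The contradiction is then extracted from the Bochner-type identity
\[
  \Callias_\psi^2 = \nabla^*\nabla + \tfrac{\scal_g}{4} + \psi^2 + \cliff(\dd\psi)\RelDiracInv + \tfrac12\mathcal{R}^{E},
\]
where $\mathcal{R}^{E}$ is the twisting-curvature endomorphism, of norm $\lesssim \|\FullCurv^E\|_\infty$ and hence negligible. Pairing against a section $\sigma$ satisfying the chiral boundary condition and integrating, the interior integrand on the favorable spinor eigenspace becomes $\scal_g/4+2\psi^2+(n/2)^2\geq(2n^2-n)/4>0$, and the boundary integrand is non-negative by the calibration above; hence $\Callias_\psi$ is injective, contradicting Step~1. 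The main technical obstacle is the analysis of the cross term $\cliff(\dd\psi)\RelDiracInv$: a naive pointwise estimate delivers only $-|\dd\psi|=-\psi^2-(n/2)^2$ and leaves a negative interior integrand bounded below by $-n/4$, destroying the argument. The sharp version requires showing that the chiral boundary condition together with the Clifford anti-commutation relations effectively constrains $\sigma$ to the $+1$-eigenspace of $\cliff(\dd\rho)\RelDiracInv$, producing the favorable contribution $+\varphi'(\rho)=\psi^2+(n/2)^2$. Implementing this compatibility between the auxiliary sign of $\RelDiracInv$ and the gradient of $\rho$ is the heart of the proof, and a secondary issue is that $\rho$ is only Lipschitz, so the Bochner identity must be justified via mollification.
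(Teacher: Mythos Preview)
Your overall architecture is right, but the step you yourself flag as ``the heart of the proof'' does not work, and the actual mechanism that closes the gap is different.

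The claim that the chiral boundary condition ``effectively constrains $\sigma$ to the $+1$-eigenspace of $\clm(\D\rho)\RelDiracInv$'' in the interior is unfounded. The boundary condition $\chi(u|_{\partial V}) = u|_{\partial V}$ only controls $u$ along $\partial V$; there is no propagation principle forcing $\clm(\D\rho)\RelDiracInv u = u$ in the interior. In fact, in the paper this identity is obtained only \emph{a posteriori}, in the rigidity analysis, as a consequence of all inequalities being equalities---it is not an ingredient in the width estimate itself. So the favorable interior term $\scal_g/4 + 2\psi^2 + (n/2)^2$ is not available, and your argument as written yields only the $-n/4$ bound you noticed.

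There are two things you are missing. First, the integrated identity should use the Friedrich/Penrose improvement, which replaces $\scal_g/4$ by $\tfrac{n}{n-1}\cdot\tfrac{\scal_g}{4}$ in the curvature term (at the cost of dropping $|\Penrose u|^2 \geq 0$). With $\scal_g \geq n(n-1)$ this gives $n^2/4$, which exactly cancels the naive lower bound $\psi^2 - |\D\psi| \geq -n^2/4$. Second, and this is the genuine trick you are lacking: to get a \emph{strictly} positive constant (needed to absorb the small twisting curvature $\Curv^E$), one does not aim for the critical potential. Instead one exploits the strict inequality $\width(V,g) > l$ to insert a parameter $0 < \lambda_0 < 1$ and take $\psi = f_{\lambda_0}\circ x$ with $f_{\lambda_0}(s) = \lambda_0 \tfrac{n}{2}\tan\bigl(\tfrac{n}{2}\lambda_0 s\bigr)$ on a slightly wider interval. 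Then $\psi^2 - |\D\psi| \geq -\lambda_0^2 \tfrac{n^2}{4}$, producing the slack $\tfrac{n^2}{4}(1-\lambda_0^2) > 0$; the mean-curvature hypothesis still dominates the boundary values because the interval has been widened, and now one chooses $E$ with curvature small enough relative to this slack. This $\lambda$-rescaling is what actually drives the contradiction, not any eigenspace constraint.
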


Again, as the expression \(-\tan(n l / 4)\) tends to \(-\infty\) as \(l \to 2\pi/n\), we obtain the strict version of the original estimate desired by~\cref{conj:band-width}.
This also follows from~\cite[Corollary~1.5]{ZeidlerWidthLargeness}.
If, in addition, we assume that one of the boundary components is mean convex, then we can even obtain a strict bound of \(\pi/n\).

\begin{cor}[{cf.~\cref{cor:total-band-estimate}}]
    Let \(n\) be odd and \((V,g)\) be an \(n\)-dimensional \(\Ahat\)-overtorical spin band. 
    Suppose that \(\scal_g \geq n(n-1)\).
    Then we always have \(\width(V, g) < {2\pi}/{n}\).
    Moreover, if \(\partial_- V\) is mean convex, then \(\width(V, g) < {\pi}/{n}\).
\end{cor}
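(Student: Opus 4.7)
The plan is to deduce both assertions directly from \cref{thm:intro-band-estimate} by exploiting the compactness of \(\partial V\). Since \(\mean_g\) is continuous on the compact boundary \(\partial V\), it attains a finite infimum; set \(m \coloneqq \min_{\partial V} \mean_g\) and, for the refinement, \(m_+ \coloneqq \min_{\partial_+ V} \mean_g\).

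For the first bound, observe that the function \(l \mapsto \tan(nl/4)\) is continuous and strictly increasing from \(0\) to \(+\infty\) on \([0, 2\pi/n)\). Consequently one can pick some \(l \in (0, 2\pi/n)\) with \(\tan(nl/4) \geq \max(-m, 0)\); this rephrases as \(\mean_g \geq -\tan(nl/4)\) pointwise on \(\partial V\). Since the scalar curvature hypothesis \(\scal_g \geq n(n-1)\) and the \(\Ahat\)-overtorical structure are also in place, the first bullet of \cref{thm:intro-band-estimate} now yields \(\width(V, g) \leq l < 2\pi/n\), which is the claimed strict inequality.

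For the refinement, assume additionally that \(\partial_- V\) is mean convex, so that \(\mean_g|_{\partial_- V} \geq 0\), and note that \(l \mapsto \tan(nl/2)\) exhausts \([0, +\infty)\) as \(l\) ranges over \([0, \pi/n)\). Selecting \(l \in (0, \pi/n)\) with \(\tan(nl/2) \geq \max(-m_+, 0)\) arranges both mean-curvature hypotheses of the second bullet of \cref{thm:intro-band-estimate}, so we conclude \(\width(V, g) \leq l < \pi/n\).

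There is no genuine obstacle in this corollary: the strict inequalities materialize precisely because compactness of \(V\) guarantees a finite lower bound on \(\mean_g\), which can always be absorbed into a choice of \(l\) strictly below the relevant critical value. In effect, this corollary is the way to remove the explicit mean-curvature hypothesis from \cref{thm:intro-band-estimate}, trading the sharp relation between \(\width(V,g)\) and \(\mean_g\) for a uniform—but no longer sharp in terms of boundary data—strict width bound.
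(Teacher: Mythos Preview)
Your proof is correct and follows essentially the same approach as the paper: both use compactness of \(\partial V\) to obtain a finite lower bound on \(\mean_g\), then choose \(l\) strictly below the critical threshold so that the relevant \(-\tan\) term lies below this bound, and apply the respective case of \cref{thm:intro-band-estimate}. The paper's version is simply terser, phrasing this as ``\(-\tan(\vartheta) \to -\infty\) as \(\vartheta \nearrow \pi/2\) but \(\inf_{p \in \partial V} \mean_g > -\infty\) by compactness.''
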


\begin{rem}
The statement of \cref{thm:intro-band-estimate} exhibited here in the introduction is a special case of the more general \cref{band-estimate} comparing the the mean curvature to arbitrary values of the form \(\mp \tan(n t_\pm /2)\), where \(-\pi/n < t_- < t_+ < \pi/n\), to get a corresponding width bound \(t_+ - t_-\).
\end{rem}
\begin{rem}
Our methods apply to a more general class of bands of \emph{infinite vertical \(\Ahat\)-area} which, in particular, includes bands diffeomorphic to \(M \times [-1,1]\) with \(M\) being a closed spin manifold of infinite \(\Ahat\)-area in the sense of \cref{def:Ahat area}. See~\cref{sec:bands} for details.
\end{rem}

\begin{rem}\label{rem:optimality}
  The band estimates given in \cref{thm:intro-band-estimate} are sharp.
  This follows from the warped product metric \(\varphi^2 g_{\Torus^{n-1}} + \D x \otimes \D x\) on \(\Torus^{n-1} \times [-l,l]\) for any \(0 < l < \pi/n\), where \(g_{\Torus^{n-1}}\) is the flat torus metric and \(\varphi(t) = \cos(nt/2)^{2/n}\), as indicated by Gromov in~\cite[653]{Gromov:MetricInequalitiesScalar}.
   By rescaling a given arbitrary metric \(g_M\) on any manifold \(M\) allowed in~\cref{conj:band-width}, the warped product metric \(\varphi^2 g_{M} + \D x \otimes \D x\) in fact shows optimality of this estimate on any band diffeomorphic to \(M \times [-1,1]\).
    Incidentally, this construction also shows optimality of the estimate in~\cref{CollarInfiniteKArea} by forgetting band structure and simply considering \(M = \Torus^{n-1} \times [-l,l]\) with this warped product metric.
\end{rem}

\subsection{Extremality and rigidity results}\label{subsec:intro-rigidity}
In view of the band width estimates, it is natural to investigate the extremal case.
The class of \(\Ahat\)-overtorical bands discussed above includes the special case of \emph{\(\Ahat\)-bands}, that is, bands such that \(\Ahat(\partial_- V) \neq 0\) (and thus, by bordism invariance also \(\Ahat(\partial_+ V) \neq 0\)).
In this special case, we prove the following rigidity theorem stating that the extremal case can only be achieved by the warped product construction discussed in~\cref{rem:optimality} over a Ricci flat manifold.

\begin{thm}[{cf.~\cref{cor:band-rigidity-concrete}}]\label{intro:band-rigidity}
Let \((V,g)\) be an \(n\)-dimensional band which is a spin manifold and satisfies \(\Ahat(\partial_- V) \neq 0\).
Suppose that \(\scal_g \geq n(n-1)\).
  Let \(0 < d < \pi/n\) and assume furthermore that one of the following conditions holds:
  \begin{itemize}
    \item either \(\width(V,g) \geq 2d\) and \(\mean_g|_{\partial V} \geq -\tan({n d}/2)\),
    \item or \(\width(V,g) \geq d\) and \(\mean_g|_{\partial_- V} \geq 0\), \(\mean_g|_{\partial_+ V} \geq -\tan({n d}/2)\).
  \end{itemize}
  Then \((V,g)\) is isometric to a warped product \((M \times I, \varphi^2 g_M + \D{x} \otimes \D{x})\), where either \(I = [-d,d]\) or \(I=[0,d]\), \(\varphi(t) = \cos\left(n t/{2}\right)^{2/n}\) and \(g_M\) is some Riemannian metric on \(M\) which carries a non-trivial parallel spinor.
  In particular, \(g_M\) is Ricci-flat.
\end{thm}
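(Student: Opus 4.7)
The plan is to leverage Theorem~\ref{thm:intro-band-estimate} at its boundary case. Under either bullet hypothesis, the width bound from that theorem is exactly saturated ($\width(V,g) = 2d$, respectively $= d$), and the mean curvature inequalities along $\partial V$ must be realized as pointwise equalities. Following the general principle outlined in the introduction, such simultaneous saturation of a family of Dirac-type estimates produces a non-trivial solution $\sigma$ to an associated boundary value problem for the Callias operator, and the rigidity of $(V,g)$ is extracted by analyzing the equality case of the underlying Schr\"odinger--Lichnerowicz--Weitzenb\"ock identity.

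Concretely, one works with the relative Dirac bundle on $V$ employed in the proof of Theorem~\ref{thm:intro-band-estimate} in the $\widehat{\mathrm{A}}$-band case, and considers the Callias operator $\Callias_\psi = \Dirac + \psi\RelDiracInv$ with potential $\psi(x) = -\tan\bigl(n(\rho(x)-c)/2\bigr)$, where $\rho$ is (a smoothed version of) the signed distance to $\partial_- V$ and $c$ is chosen so that the prescribed boundary bounds on $\mean_g$ coincide with $-\psi|_{\partial V}$. The non-vanishing of $\widehat{\mathrm{A}}(\partial_- V)$ forces the index of $\Callias_\psi$ under the chiral boundary conditions of the relative Dirac bundle to be non-zero, supplying a non-zero $\sigma$ with $\Callias_\psi\sigma = 0$. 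Integrating the augmented Schr\"odinger--Lichnerowicz--Weitzenb\"ock identity with boundary integration by parts yields
\begin{equation*}
0 = \|\Callias_\psi\sigma\|^2 = \int_V \bigl(|P\sigma|^2 + Q(\scal_g,\psi,\D\psi)\,|\sigma|^2\bigr) + \int_{\partial V} B(\mean_g,\psi)\,|\sigma|^2,
\end{equation*}
where $P\sigma$ is the twistor-type component of the modified covariant derivative of $\sigma$, the pointwise quantity $Q$ is non-negative precisely when $\scal_g \geq n(n-1)$ and $\psi$ has the above form, and the boundary integrand $B$ is non-negative precisely when $\mean_g$ satisfies the prescribed lower bounds. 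Under the hypotheses every term on the right is non-negative, so each must vanish identically.

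This triple vanishing gives pointwise on $V$: first, $\scal_g \equiv n(n-1)$; second, along $\partial V$ the mean curvature attains the prescribed bound; and third, $\sigma$ satisfies a generalized Killing-type equation of the schematic form $\nabla_X \sigma = f(\rho)\, X \cdot \RelDiracInv \cdot \sigma$ for a specific function $f$ of $\rho$ determined by $\psi$. Differentiating $|\sigma|^2$ using this identity shows that $|\sigma|^2$ depends only on $\rho$ and satisfies an ODE whose normalized solution is $|\sigma|^2 = C\cos(n\rho/2)^{2/n}$; in particular $\sigma$ is nowhere zero, $\rho$ is globally smooth with $|\D\rho|\equiv 1$, and the flow of $\nabla\rho$ gives a diffeomorphism $V \cong M \times I$ with $I = [-d,d]$ or $[0,d]$ and $M$ a fixed level set of $\rho$. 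Tracing the generalized Killing equation tangentially shows that each level set is umbilic with principal curvature $\varphi'(\rho)/\varphi(\rho)$ for $\varphi(t) = \cos(nt/2)^{2/n}$, which is precisely the defining property of the warped product $\varphi^2 g_M + \D t\otimes \D t$. Rescaling $\sigma$ by an appropriate power of $\varphi$ cancels the warping contribution to the spinor connection along each fibre, producing a non-trivial parallel spinor on $(M,g_M)$; its existence automatically forces $g_M$ to be Ricci flat.

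The principal technical obstacle is the regularity passage: $\rho$ is only Lipschitz a priori, so the Callias setup and the equality analysis must be executed with smooth approximations of $\rho$, with the equality conclusions then propagated to the limit. Once $\sigma$ is known to be nowhere vanishing, the generalized Killing equation becomes an overdetermined elliptic system with smooth coefficients, which bootstraps the smoothness of $\rho$ and of the entire warped product structure. Additional care is also needed to verify that the range of $\rho$ fills $I$ exactly rather than degenerating earlier, which follows from the width equality combined with the strict positivity of $\varphi$ on the interior of $I$.
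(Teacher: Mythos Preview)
Your overall architecture matches the paper's: use $\widehat{\mathrm{A}}(\partial_- V)\neq 0$ to get non-trivial index for the Callias operator with potential $\psi=f(x)=\tfrac{n}{2}\tan(\tfrac{n}{2}x)$ (you dropped the factor $\tfrac{n}{2}$, but that is cosmetic), feed a non-zero kernel element into the integrated Weitzenb\"ock identity, read off the equality case to get $\clm(\D x)u=\RelDiracInv u$ and the twistor-type equation, deduce $|u|^2=c\,\varphi(x)$ is nowhere zero, compute the Hessian of $x$ to obtain the warped product, and rescale to find a fiberwise parallel spinor.

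The one place where your sketch diverges from the paper and has a genuine gap is the regularity step. Your proposal to run the equality analysis with smooth approximations of the Lipschitz width function $x$ and then pass to the limit is problematic: for a strictly smaller interval one can choose a \emph{smooth} width function, but then the estimate becomes strict and the kernel vanishes; the equality case only arises for the exact Lipschitz $x$, so there is nothing to pass to the limit. Your alternative suggestion---that the Killing-type equation has ``smooth coefficients'' once $u$ is nowhere vanishing---is circular, since the coefficient $f(x)$ still involves the merely Lipschitz $x$. The paper's resolution is a different trick: from $\clm(\D x)u=\RelDiracInv u$ and the square formula $\Callias_\psi^2=\Dirac^2+\clm(\D\psi)\RelDiracInv+\psi^2$, together with the special algebraic identity $f(x)^2-f'(x)\equiv -\tfrac{n^2}{4}$, one deduces that $u$ satisfies the \emph{constant-coefficient} equation $\Dirac^2 u=\tfrac{n^2}{4}u$. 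Interior elliptic regularity then gives $u$ smooth; since $u$ is nowhere vanishing and $\RelDiracInv$ is smooth, the relation $\clm(\D x)u=\RelDiracInv u$ forces $\D x$ itself to be smooth, after which the Hessian computation and the rest of your outline go through verbatim.
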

Again, we also have a more general version of this theorem involving arbitrary mean curvature bounds of the form \(\mp \tan(n t_\pm /2)\), see~\cref{thm:band-rigidity}.

The study of extremality questions about scalar curvature has a long history initiated by Gromov's K-area inequalities~\cite{Gromov:KArea}.
\citeauthor{LLarull}~\cite{LLarull} proved sharp inequalities using the Dirac operator which imply that the round metric on the sphere is scalar curvature extremal, that means, it cannot be enlarged without a decrease in the scalar curvature at some point.
Llarull's technique and results were subsequently refined and generalized by~\citeauthor{Goette-Semmelmann}~\cite{Goette-Semmelmann} and remain of central importance in contemporary research on scalar curvature, see for instance~\cite[Section~10]{Gromov:MetricInequalitiesScalar}, \cite[Section~4]{gromovFourLecturesScalar2019}, \cite{Zhang-SIGMA}.
\citeauthor{Lott:Boundary}~\cite{Lott:Boundary} recently extended the technique of Llarull and Goette and Semmelmann to even-dimensional manifolds with boundary using the Dirac operator with (local) boundary conditions.
The following results combine the technique of \citeauthor{Goette-Semmelmann} with our machinery to obtain a new kind of extremality result for a large class of warped product manifolds.

Let \(M\) be a manifold with boundary \(\partial M\).
We say that a Riemannian metric \(g_M\) on \(M\) is \emph{scalar-mean extremal} if every metric \(g\) on \(M\) which satisfies \(g \geq g_M\), \(\scal_{g} \geq \scal_{g_M}\) and \(\mean_g \geq \mean_{g_M}\) already satisfies \(\mean_{g_M} = \mean_g\) and \(\scal_{g_M} = \scal_{g}\).
Moreover, we say that \(g_M\) is \emph{scalar-mean rigid} if any such metric must satisfy \(g = g_M\).

\begin{thm}[{cf.~\cref{goette-semmelmann-band-extremality}}] \label{intro:goette-semmelmann-band-extremality}
    Let \(n\) be odd and \((M, g_M)\) be an \((n-1)\)-dimensional Riemannian spin manifold of non-vanishing Euler-characteristic whose Riemannian curvature operator is non-negative.
    Let \(\varphi \colon [t_-, t_+] \to (0,\infty)\) be a smooth strictly logarithmically concave function and consider the warped product metric \(g_V = \varphi^2 g_M + \D x \otimes \D x\) on \(V \coloneqq M \times [t_-, t_+]\).
    Then any metric \(g\) on \(V\) which satisfies 
    \begin{myenumi}
      \item \(g \geq g_V\),
      \item \(\scal_g \geq \scal_{g_V}\),
      \item \(\mean_g \geq \mean_{g_V}\)
    \end{myenumi}
    is itself a warped product \(g = \varphi^2 \tilde{g}_M + \D x \otimes \D x\) for some metric \(\tilde{g}_M\) on \(M\) such that \(\scal_{\tilde{g}_M} = \scal_{g_M}\).
    In particular, \(g_V\) is scalar-mean extremal.
    
    If, in addition, the metric \(g_M\) satisfies \(\Ric_{g_M} > 0\), then \(g_V\) is scalar-mean rigid.
\end{thm}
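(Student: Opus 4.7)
The plan is to combine the Callias-operator technology for relative Dirac bundles developed in~\cref{sec:relative Dirac bundles,sec:index-theory,sec:curvature-estimates} with the twisting construction of~\cite{Goette-Semmelmann}, which extracts topological information from $\chi(M)\neq 0$ in the presence of a non-negative curvature operator on $(M,g_M)$.

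Since $\dim M = n-1$ is even, the hypotheses on $(M, g_M)$ yield via~\cite{Goette-Semmelmann} a Hermitian bundle with compatible metric connection $E\to M$ such that (i) the twisted spin Dirac operator $\Dirac_M^{E}$ on $M$ has index $\pm\chi(M)\neq 0$, and (ii) the curvature endomorphism $\mathcal{R}^E$ of $E$, acting by Clifford contraction on $S_M\otimes E$, satisfies $\mathcal{R}^E\geq -\tfrac{1}{4}\scal_{g_M}$ pointwise, with equality characterized by vanishing of the Ricci tensor in the directions realizing the bound. Over $V = M\times[t_-,t_+]$ I build the relative Dirac bundle $S_V\otimes\pi^*E$, where $\pi\colon V\to M$ is the projection and the relative involution $\RelDiracInv$ is Clifford multiplication by $\partial_x$ (signed appropriately on $\partial_\pm V$). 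Consider the Callias operator $\Callias_\psi = \Dirac^{S_V\otimes\pi^*E} + \psi\,\RelDiracInv$, with $\psi$ a Lipschitz potential on $[t_-,t_+]$ proportional to $(\log\varphi)'$---concretely, the solution of the Riccati-type equation associated with $\varphi$ whose boundary values match $\mp\mean_{g_V}|_{\partial_\pm V}$. Strict log-concavity of $\varphi$ ensures $\psi$ is strictly monotone. The index theorem for relative Dirac bundles from~\cref{sec:index-theory} computes the Fredholm index of the associated boundary value problem as $\pm\chi(M)\neq 0$, so $\ker\Callias_\psi$ contains a non-zero spinor $u$ satisfying the prescribed local boundary conditions.

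Substituting $\Callias_\psi u = 0$ into the Schrödinger--Lichnerowicz--Callias identity from~\cref{sec:curvature-estimates} yields
\[
0 \;=\; \|\nabla u\|_g^2 + \int_V Q\,|u|^2 + \int_{\partial V} B\,|u|^2,
\]
with $Q = \tfrac{1}{4}\scal_g + \mathcal{R}^{\pi^*E} + \psi^2 + c(\nabla\psi)\RelDiracInv$ acting pointwise on $u$, and $B$ a boundary term determined by $\mean_g$ and $\psi$. The hypothesis $g\geq g_V$ combined with (ii) yields $\mathcal{R}^{\pi^*E}\geq -\tfrac{1}{4}\varphi^{-2}\pi^*\scal_{g_M}$ on $u$; inserting the warped-product identity $\scal_{g_V} = \varphi^{-2}\pi^*\scal_{g_M} - 2(n-1)(\log\varphi)'' - n(n-1)((\log\varphi)')^2$, the hypothesis $\scal_g\geq\scal_{g_V}$, and the Riccati identity satisfied by $\psi$, one concludes $Q\geq 0$. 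Similarly, $\mean_g\geq\mean_{g_V}$ together with the prescribed boundary values of $\psi$ yields $B\geq 0$.

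Since $u\not\equiv 0$, all three non-negative contributions must vanish identically, so $u$ is parallel and the pointwise inequalities are all saturated. Saturation of the $c(\nabla\psi)\RelDiracInv$-term forces $\D x$ to have $g$-length $1$ and to be $g$-orthogonal to its level sets, so $g$ splits as a smooth $x$-family $g = \varphi^2\tilde g_M(x) + \D x\otimes \D x$. Saturation of the Goette--Semmelmann bound on each cross-section, together with the $M$-directional parallelism of $u$, implies via the equality analysis in~\cite{Goette-Semmelmann} that $\tilde g_M(x)$ is independent of $x$ and that $\scal_{\tilde g_M}\equiv \scal_{g_M}$, proving the extremality claim. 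If additionally $\Ric_{g_M}>0$, the classical Goette--Semmelmann rigidity applied to the pair $(\tilde g_M, g_M)$, which satisfies $\tilde g_M\geq g_M$ and $\scal_{\tilde g_M} = \scal_{g_M}$, forces $\tilde g_M = g_M$, whence $g = g_V$. The main technical obstacle will be the precise extraction of the warped-product structure from the equality case: the norm and orthogonality of $\nabla x$ must be read off carefully from the combined saturation of the $c(\nabla\psi)\RelDiracInv$-contribution and the local boundary conditions prescribed by $\RelDiracInv$, at which point the specific structure of the Goette--Semmelmann bundle---in particular its compatibility with the parallel spinor equation on the cross-sections---becomes essential.
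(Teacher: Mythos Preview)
Your overall strategy---twist by a Goette--Semmelmann bundle, run the Callias spectral estimate, and analyse the equality case---is the paper's. But the displayed identity with $\|\nabla u\|_g^2$ is the \emph{unrefined} Bochner formula, and with it the argument does not close. The estimate actually used (\cref{thm:callias-estimate-with-boundary}) passes through the Friedrich decomposition $|\nabla u|^2 = |\Penrose u|^2 + \tfrac{1}{n}|\Dirac u|^2$, which produces the factor $\tfrac{n}{n-1}$ on the curvature term and $\tfrac{n}{2}$ (not $\tfrac{n-1}{2}$) on the boundary mean-curvature term. These coefficients are exactly what makes the interior integrand non-negative with $\psi = -\tfrac{n}{2}(\log\varphi)'$ while the boundary integrand is simultaneously non-negative under $\mean_g \geq \mean_{g_V}$ (compare~\labelcref{eq:warping-balance}). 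With the raw formula the boundary forces $\psi = -\tfrac{n-1}{2}(\log\varphi)'$, and then the interior term becomes $-\tfrac{n-1}{4}((\log\varphi)')^2 < 0$. Correspondingly, the equality case yields a \emph{twistor} spinor $\Penrose u = 0$, not a parallel one; a genuinely parallel $u$ would force $\Curv u = 0$ and hence $\scal_g \equiv 0$, contradicting the generic situation.

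Your extraction of the warped-product structure is also too coarse. From $|\D x|_g = 1$ you only get a splitting $g = g_x + \D x \otimes \D x$ for \emph{some} family $g_x$; nothing yet says $g_x = \varphi(x)^2\tilde g_M$ with $\tilde g_M$ independent of $x$. The paper obtains this by computing the Hessian $\nabla^2 x = h(x)(g - \D x \otimes \D x)$ directly from the covariant derivative of the rescaled section $w = \varphi(x)^{-1/2}u$ (see \cref{warped-product-rigidity-lemma,thm:general-warped-product-rigidity}), which pins down the warping function as $\varphi$ via the Lie-derivative equation $\LieDeriv_{\partial_x} g_x = 2h(x) g_x$. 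Finally, the involution $\RelDiracInv$ is not Clifford multiplication by $\partial_x$: in the relevant relative Dirac bundle (\cref{ex:SimpleBand}) it is the constant matrix $\bigl(\begin{smallmatrix}0 & -\iu\\ \iu & 0\end{smallmatrix}\bigr)$ on the doubled spinor bundle; $\clm(\D x)$ would not even square to $-1$ before one has established $|\D x|_g = 1$.
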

Note that strictly logarithmically concave means that \(\log(\varphi)'' < 0\).
Geometrically, this means that the mean curvature of \(M \times \{x\}\) in the warped product (with respect to the normal field \(\partial_x\)) is strictly increasing.

\begin{rem}
We have a more general version of this theorem involving distance non-increasing maps of non-zero degree to these spaces; see~\cref{thm:goette-semmelmann-band-rigidity}.
\end{rem}

A particularly interesting special case of manifolds which can be written as such log-concave warped products are annuli in spaces of constant curvature.
Indeed, as a direct consequence of \cref{intro:goette-semmelmann-band-extremality}, we obtain the following result.

\begin{cor}[cf.~\cref{annulus-extremal-rigidity}] \label{intro:annulus-extremal-rigidity}
    Let \(n \geq 3\) be odd and \((M_\kappa, g_\kappa)\) the \(n\)-dimensional simply connected space form of constant sectional curvature \(\kappa \in \R\).
    Let \(0 < t_- < t_+ < t_\infty\), where \(t_\infty = + \infty\) if  \(\kappa \leq 0\) and \(t_\infty = \pi / \sqrt{\kappa}\) if \(\kappa > 0\). Consider the annulus 
    \[\Annulus_{t_-, t_+} \coloneqq \{ p \in M_\kappa \mid t_- \leq d_{g_\kappa}(p,p_0) \leq t_+ \}\] 
    around some base-point \(p_0 \in M_\kappa\).
    Then the metric \(g_\kappa\) is scalar-mean rigid on \(\Annulus_{t_-, t_+}\).
  \end{cor}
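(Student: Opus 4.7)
The plan is to realize the annulus $\Annulus_{t_-, t_+}$ as a warped product of exactly the form appearing in \cref{intro:goette-semmelmann-band-extremality}, verify the hypotheses of that theorem in the promoted (positive Ricci) case, and then invoke it directly.

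First, I would use the fact that the simply connected \(n\)-dimensional space form \((M_\kappa, g_\kappa)\) admits polar normal coordinates based at the point \(p_0\) which give, on \(M_\kappa \setminus \{p_0\}\) (and for \(\kappa > 0\) on the complement of \(p_0\) and its antipodal point), an isometry with \((\Sphere^{n-1} \times (0, t_\infty), \varphi_\kappa(x)^2 g_{\Sphere^{n-1}} + \D x \otimes \D x)\), where \(g_{\Sphere^{n-1}}\) is the round metric of sectional curvature \(1\) and
\[
  \varphi_\kappa(x) = \begin{cases} \sin(\sqrt{\kappa}\, x)/\sqrt{\kappa} & \kappa > 0,\\ x & \kappa = 0,\\ \sinh(\sqrt{-\kappa}\, x)/\sqrt{-\kappa} & \kappa < 0. \end{cases}
\]
Restricting to the radial interval \([t_-, t_+] \subset (0, t_\infty)\) identifies \(\Annulus_{t_-, t_+}\) isometrically with the warped product band \(V = \Sphere^{n-1} \times [t_-, t_+]\) with warping function \(\varphi_\kappa\) and base metric \(g_M = g_{\Sphere^{n-1}}\).

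Next I would check each hypothesis of \cref{intro:goette-semmelmann-band-extremality} for the upgraded (scalar-mean rigid) conclusion. Spin: the sphere \(\Sphere^{n-1}\) is spin. Non-vanishing Euler characteristic: since \(n \geq 3\) is odd, \(n-1\) is even and \(\chi(\Sphere^{n-1}) = 2 \neq 0\). Non-negative curvature operator and \(\Ric_{g_M} > 0\): the round sphere has strictly positive curvature operator, hence in particular positive Ricci. Strict logarithmic concavity of \(\varphi_\kappa\) on \((0, t_\infty)\): a direct computation gives \((\log \varphi_\kappa)''(x) = -\kappa / \sin^2(\sqrt{\kappa}\,x)\), \(-1/x^2\), or \(\kappa/\sinh^2(\sqrt{-\kappa}\,x)\) in the three cases, each of which is strictly negative throughout \((0, t_\infty)\).

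Having verified all assumptions, \cref{intro:goette-semmelmann-band-extremality} applies in its rigidity form: any Riemannian metric \(g\) on \(\Annulus_{t_-, t_+}\) with \(g \geq g_\kappa\), \(\scal_g \geq \scal_{g_\kappa}\), and \(\mean_g \geq \mean_{g_\kappa}\) must coincide with \(g_\kappa\). This is exactly the statement that \(g_\kappa\) is scalar-mean rigid on \(\Annulus_{t_-, t_+}\), concluding the proof.

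There is essentially no technical obstacle here; the argument is a direct reduction, with the only nontrivial verifications being (a) that all three space-form warping functions are strictly log-concave on their domains, and (b) keeping track of the constraint \(t_+ < t_\infty\) in the \(\kappa > 0\) case, which is precisely what avoids the antipodal singularity and keeps us inside the regime where the warped product description is an isometry.
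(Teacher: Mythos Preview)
Your proposal is correct and follows essentially the same route as the paper: realize the annulus via geodesic polar coordinates as a warped product over the round sphere, verify strict log-concavity of the warping function, and apply \cref{intro:goette-semmelmann-band-extremality} in its Ricci-positive form. The paper does the same, only writing the log-concavity more uniformly as \((\log \sn_\kappa)'' = -1/\sn_\kappa^2\) rather than splitting into three cases.
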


Similar statements for log-concave warped products (and, in particular, punctured space forms) have been suggested by Gromov in \cite[Section~5.4]{gromovFourLecturesScalar2019} based on considerations with \(\mu\)-bubbles.
Moreover, different rigidity statements for hyperbolic space have been studied in the context of the positive mass theorem, see for instance~\cite{Min-Oo:ScalarCurvRigid,Andersson-Dahl:ScalarCurvRigid,Chrusciel-Herzlich:MassHyperbolic,Sakovich-Sormani:AlmostRigidHyperbolic}.

\begin{rem}
These results also appear quite similar to Lott's main result in~\cite{Lott:Boundary} (apart from the fact that they apply in complementary parities of the dimension).
They are, however, of an essentially different geometric nature. On the one hand, \cite{Lott:Boundary}~proves \emph{area} extremality rather than merely (length-) extremality as in our results.
Our technique here relies on the existence of a suitable Lipschitz function and does not appear to be readily applicable to the study of area non-increasing maps.
On the other hand, we do not require that the curvature operator of the metric on \(V\) itself is non-negative or that the second fundamental form of the boundary is non-negative.
Indeed, as the examples of hyperbolic annuli show, our results include manifolds of negative sectional curvature.
Similarly, suitable spherical annuli are examples which have negative second fundamental form at the boundary.
One should note that on a manifold \emph{without boundary}, a metric of negative scalar curvature can never be scalar extremal---simply rescaling using a constant \(> 1\) provides a counter-example.
However, in our example of an annulus in hyperbolic space, the outer boundary component has positive mean curvature which thwarts such a rescaling argument.
This shows that the presence of the outer boundary component is crucial for our result to hold in the negative curvature example.
More generally, in all the examples covered by \cref{intro:annulus-extremal-rigidity} it is the case that at least one quantity is positive among the scalar curvature in the interior and the mean curvatures on the two boundary components.
\end{rem}

\subsection{Higher index theory and future directions}
In the present article we are only working with classical Dirac-type operators on finite dimensional Hermitian vector bundles rather than using \emph{higher index theory} which would involve bundles with coefficients in infinite dimensional \textCstar-algebras.
This is in contrast to our previous work~\cite{Zeidler:band-width-estimates,Ce20,Zeidler:WidthLargeness}.
The main rationale behind this change of perspective is that it allows a quicker and more accessible exposition of the novel geometric arguments we want to exhibit through this article.
At the same time, this does not sacrifice too much of the possible generality of the statements, because many examples which are usually approached via higher index theory can already be dealt with via more classical notions like \emph{enlargeability} or \emph{infinite \(\K\)-area}.
It does, however, restrict the parity of the dimension in some of the results.

Note that our central structure of a \emph{relative Dirac bundle} can be straightforwardly generalized to \textCstar-algebra coefficients.
In this way it would be possible to reformulate all arguments from~\cite{Zeidler:band-width-estimates,Ce20,Zeidler:WidthLargeness} in terms of this concept.
This will be partly explained in~\cite{perspectives-in-psc:generalized-callias}.
Since this notion is fairly general, we also expect that it can be applied in many other geometric contexts we have not considered thus far.
 However, extending the results from this article to higher index theory requires some new analytic work because the present state of the art in the literature on boundary value problems for Dirac-type operators, where we mostly rely on~\citeauthor{Baer-Ballmann:Boundary-value-problems-first-order}~\cite{Baer-Ballmann:Guide-Boundary-value-problems-Dirac,Baer-Ballmann:Boundary-value-problems-first-order}, does not cover the case of infinite dimensional bundles.
Since this aspect is orthogonal to the geometric arguments presented here, we will address it separately in future work.

\paragraph*{Acknowledgments.}
We thank Misha Gromov for useful comments on an earlier version of this manuscript pointing us to the examples showing optimality of \cref{LongNeckProblem}.
We also would like to thank Bernd Ammann for a brief helpful correspondence and the anonymous referee for valuable suggestions improving the exposition.
\section{Relative Dirac bundles}\label{sec:relative Dirac bundles}
In this section, we set up the differential geometric preliminaries underlying the rest of this article.
In particular, we introduce our new concept of a \emph{relative Dirac bundle}.

We begin by fixing some notation.
Let $(M,g)$ be a Riemannian manifold and let $E\to M$ be a Hermitian vector bundle.
The space of smooth sections will be denoted by \(\Ct^\infty(M,E)\) and the subspace of compactly supported smooth sections by $\Ct^\infty_\cpt(M,E)$.
We denote fiberwise inner products by $\langle\blank,\blank\rangle$ and fiberwise norms by $|\blank|$.
Next we recall the notion of a classic Dirac bundle in the sense of Gromov and Lawson.
\begin{defi}[{\cite[Section~1]{GromovLawson:PSCDiracComplete}, \cite{LawsonMichelsohn:SpinGeometry}}]
  A (\(\Z/2\)-graded) \emph{Dirac bundle} over $M$ is a Hermitian vector bundle $S\to M$ with a metric connection \(\nabla \colon \Ct^\infty(M,S) \to \Ct^\infty(M,\T^\ast M \otimes S)\) (endowed with a parallel and orthogonal \(\Z/2\)-grading \(S = S^+ \oplus S^-\)) and a parallel bundle map
  \(
    \clm \colon \T^\ast M \to \End(S)
  \), called \emph{Clifford multiplication},
  such that \(\clm(\omega)\) is anti-self-adjoint (and odd), and \(\clm(\omega)^2 = - |\omega|^2\) for all \(\omega \in \T^\ast M\).
\end{defi}
For simplicity, we sometimes denote Clifford multiplication by \(\omega \cdot u \coloneqq \clm(\omega) u\).
Let $\Dirac = \sum_{i=1}^n \clm(e^i)\nabla_{e_i} \colon\Ct^\infty(M,S)\to\Ct^\infty(M,S)$ be the associated Dirac operator, where \(e_1, \dotsc, e_n\) is a local frame and \(e^1, \dotsc, e^n\) the dual coframe.
If we have a $\Z/2$-grading $S=S^+\oplus S^-$, the operator $\Dirac$ is odd, that is, it is of the form
\[
    \Dirac=\begin{pmatrix}
    0&\Dirac^-\\\Dirac^+&0
    \end{pmatrix},
\]
where $\Dirac^\pm\colon\Ct^\infty(M,S^\pm)\to\Ct^\infty(M,S^\mp)$ are formally adjoint to one another.

We now turn to relative Dirac bundles, an augmentation of a Dirac bundle.
\begin{defi}\label{D:relative Dirac bundle}
  Let \(K \subset \interior{M}\) be compact subset in the interior.
  A \emph{relative Dirac bundle} with \emph{support} \(K\) is a \(\Z/2\)-graded Dirac bundle \(S \to M\) together with an odd, self-adjoint, parallel bundle involution \(\RelDiracInv \in \Ct^\infty(M \setminus K, \End(S))\) satisfying \(\clm(\omega) \RelDiracInv = - \RelDiracInv \clm(\omega)\) for every \(\omega \in \T^\ast M|_{M \setminus K}\) and such that \(\RelDiracInv\) admits a smooth extension to a bundle map on an open neighborhood of $\overline{M\setminus K}$.
\end{defi}
The final technical requirement in particular ensures the existence of a unique continuous extension of \(\RelDiracInv\) to the topological boundary of \(K\).
We do not consider the further extension to a neighborhood of \(\overline{M\setminus K}\) part of the data, we just require its existence.

It follows directly from the definition that the Dirac operator anti-commutes with the involution \(\RelDiracInv\) where it is defined.

One main use of this structure is that it will allow us to associate local boundary conditions to any choice of a sign for each connected component of $\partial M$.
\begin{defi}[Boundary chirality for relative Dirac bundles]\label{D:BoundaryChirality}
  For a relative Dirac bundle \(S \to M\) and a locally constant function \(s \colon \partial M \to \{\pm 1\}\),
  we say that the endomorphism
  \begin{equation}
    \chi \coloneqq s \clm(\nu^\flat) \RelDiracInv \colon S|_{\partial M} \to S|_{\partial M} \label{eq:boundary-involution}
  \end{equation}
  is the \emph{boundary chirality} on $S$ associated to choice of signs $s$.
  Here, \(\nu \in \Ct^\infty(\partial M, \T M|_{\partial M})\) is the inward pointing unit normal field.
\end{defi}
Note that \(\chi\) is a self-adjoint even involution which anti-commutes with \(\clm(\nu^\flat)\) but commutes with \(\clm(\omega)\) for all \(\omega \in \T^\ast(\partial M)\).
  This defines local boundary conditions for sections \(u \in \Ct^\infty(M, S)\) by requiring that
  \begin{equation}
    \chi(u|_{\partial M}) = u|_{\partial M}.
    \label{eq:boundary-conditions}
  \end{equation}
In Section~\ref{sec:index-theory}, we will observe that these boundary conditions are elliptic.
The next lemma will be used to show that they are self-adjoint.

\begin{lem} \label{lem:Lagrangian}
  Let \(S \to M\) be a relative Dirac bundle and \(s \colon \partial M \to \{\pm 1\}\) a choice of signs.
  Then \(L(\chi) \coloneqq \{ u \in S|_{\partial M} \mid \chi(u) = u\}\) is a Lagrangian subbundle of \(S|_{\partial M}\) with respect to the fiberwise symplectic form \((u,v) \mapsto \langle u, \clm(\nu^\flat) v \rangle\).
  In other words, \(\clm(\nu^\flat) L(\chi) = L(\chi)^\perp\).
\end{lem}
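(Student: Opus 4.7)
The plan is to exploit the algebraic properties of the boundary chirality \(\chi\) noted already in the paragraph preceding the lemma, namely that \(\chi\) is a self-adjoint (even) involution which anti-commutes with \(\clm(\nu^\flat)\). I would first translate the Lagrangian condition into a condition about Hermitian orthogonals. Writing \(\omega(u,v) = \langle u, \clm(\nu^\flat) v\rangle\), one has \(v \in L(\chi)^{\perp_\omega}\) exactly when \(\clm(\nu^\flat) v\) is Hermitian-orthogonal to \(L(\chi)\); using \(\clm(\nu^\flat)^2 = -1\) this gives \(L(\chi)^{\perp_\omega} = \clm(\nu^\flat)\, L(\chi)^{\perp}\), where \(\perp\) from now on denotes the Hermitian orthogonal. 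Hence the Lagrangian condition \(L(\chi) = L(\chi)^{\perp_\omega}\) is equivalent to \(\clm(\nu^\flat) L(\chi) = L(\chi)^{\perp}\), which is the reformulation stated in the lemma.

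Next, I would use that \(\chi\) is a self-adjoint involution on the Hermitian bundle \(S|_{\partial M}\), so that one has the orthogonal eigenspace decomposition
\[
S|_{\partial M} = L(\chi) \oplus L(-\chi), \qquad L(-\chi) = L(\chi)^{\perp},
\]
and in particular \(L(\chi)\) is a smooth subbundle, being the image of the smooth endomorphism \(\tfrac{1}{2}(1+\chi)\) (which is smooth because \(\RelDiracInv\) smoothly extends past \(\partial M\) by assumption).

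Then I would use the anti-commutation relation \(\chi\, \clm(\nu^\flat) = -\clm(\nu^\flat)\, \chi\) to show that \(\clm(\nu^\flat)\) interchanges the two eigenspaces: for \(u \in L(\chi)\), one computes \(\chi(\clm(\nu^\flat) u) = -\clm(\nu^\flat)\chi(u) = -\clm(\nu^\flat) u\), so \(\clm(\nu^\flat) u \in L(-\chi)\). Since \(\clm(\nu^\flat)^2 = -1\) is invertible, the same argument with \(\chi\) replaced by \(-\chi\) shows \(\clm(\nu^\flat) L(-\chi) \subseteq L(\chi)\), and together these inclusions give the bundle isomorphism \(\clm(\nu^\flat)\colon L(\chi) \xrightarrow{\cong} L(-\chi)\). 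Combining this with the orthogonal decomposition above yields
\[
\clm(\nu^\flat)\, L(\chi) = L(-\chi) = L(\chi)^{\perp},
\]
which is precisely the claim.

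There is no serious obstacle here; the only mildly subtle point is the bookkeeping in the first step which shows that the stated identity is indeed equivalent to the Lagrangian property of the (anti-Hermitian) form \(\omega\). Everything else reduces to the two structural facts about \(\chi\)—self-adjoint involution and anti-commutation with \(\clm(\nu^\flat)\)—already recorded immediately after \cref{D:BoundaryChirality}.
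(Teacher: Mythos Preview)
Your proof is correct and follows essentially the same approach as the paper: both use that \(\chi\) is a self-adjoint involution to get the orthogonal eigenspace decomposition via the projections \((1\pm\chi)/2\), and then use the anti-commutation of \(\chi\) with \(\clm(\nu^\flat)\) to conclude that \(\clm(\nu^\flat)\) swaps the eigenspaces. Your write-up is more detailed than the paper's three-line version, in particular spelling out why the identity \(\clm(\nu^\flat) L(\chi) = L(\chi)^\perp\) is equivalent to the Lagrangian condition for \(\omega\), which the paper leaves implicit.
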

\begin{proof}
  Note that \(L(\chi)\) is the image of the orthogonal bundle projection 
  \((1 + \chi)/2\).
  Since \(L(\chi)^\perp\) is the image of the complementary projection \((1 - \chi)/2\) and \(\chi\) anti-commutes with \(\clm(\nu^\flat)\), we obtain \(\clm(\nu^\flat) L(\chi) = L(\chi)^\perp\).
\end{proof}

In the following, we recall several standard formulas on Dirac bundles for later use.
\begin{itemize}
  \item \emph{Green's formula}~(\cite[Proposition~9.1]{Taylor:PDE1}). For the Dirac operator,
  \begin{equation}
    \int_M \langle \Dirac u, v \rangle\ \vol_M = \int_M \langle u, \Dirac v \rangle\ \vol_M + \int_{\partial M} \langle u, \nu^\flat \cdot v \rangle\ \vol_{\partial M}, \label{eq:Dirac-Green}
  \end{equation}
  and for the connection,
  \begin{equation}
    \int_M \langle \nabla u, \varphi \rangle\ \vol_M = \int_M \langle u, \nabla^\ast \varphi \rangle\ \vol_M - \int_{\partial M} \langle u, \varphi(\nu) \rangle\ \vol_{\partial M},
    \label{eq:connection-Green}
  \end{equation}
  where \(u, v \in \Ct^\infty_\cpt(M,S)\), \(\varphi \in \Ct^\infty_\cpt(M, \T^\ast M \otimes S)\) and \(\nu \in \Ct^\infty(\partial M, \T M|_{\partial M})\) is the inward-pointing normal field.
  \item \emph{Bochner--Lichnerowicz--Weitzenböck formula}~(\cite[Proposition~2.5]{GromovLawson:PSCDiracComplete}).
  \begin{equation}\label{Weitzenbock}
    \Dirac^2 = \nabla^\ast \nabla + \Curv,
  \end{equation}
  where \(\nabla^\ast \nabla = -\sum_{i=1}^n \nabla^2_{e_i, e_i}\) is the connection laplacian and 
  \[\Curv = \sum_{i < j} \clm(e^i) \clm(e^j) \mathrm{R}^\nabla(e_i, e_j)\]
  is a bundle endomorphism linearly depending on the curvature tensor \(\mathrm{R}^\nabla\) of \(\nabla\).
  \item \emph{Penrose operator and Friedrich inequality}~(\cite[Section~5.2]{SpinorialApproach}). We have that
  \begin{equation}
    | \nabla u |^2 = | \Penrose u |^2 + \frac{1}{n} | \Dirac u |^2 \label{eq:Penrose-Dirac}
  \end{equation}
  for all \(u \in \Ct^\infty(M, S)\), where \(\Penrose \colon \Ct^\infty(M,S) \to \Ct^\infty(M,\T^\ast M \otimes S)\) is the \emph{Penrose operator} defined as
  \[
    \Penrose_\xi u \coloneqq \nabla_\xi u + \frac{1}{n} \xi^\flat \cdot \Dirac u.
  \]
  In particular, we have the \emph{Friedrich inequality}
  \[
    |\nabla u|^2 \geq \frac{1}{n} |\Dirac u|^2,
  \]
  where equality holds if and only if \(\Penrose u = 0\), that is, \(u\) satisfies the \emph{twistor equation} 
  \begin{equation}
    \nabla_\xi u + \frac{1}{n} \xi^\flat \cdot \Dirac u = 0 \label{eq:twistor}
  \end{equation}
  for all \(\xi \in \T M\).
\item \emph{Boundary Dirac operator}~(\cite[Appendix~1]{Baer-Ballmann:Guide-Boundary-value-problems-Dirac}, \cite[Introduction]{Hijazi-Montiel-Roldan:Eigenvalue-boundary-Dirac}).
Let \(\nu\) be the interior unit normal field.
We turn \(S^\partial = S|_{\partial M}\) into a Dirac bundle via the Clifford multiplication and connection,
\begin{align}
  \clm^\partial(\omega) = \clm(\omega) \clm(\nu^\flat), \quad \omega \in \T^\ast \partial M, \nonumber\\
  \nabla^\partial_\xi =\nabla_\xi +\frac{1}{2}\clm^{\partial}(\nabla_\xi \nu^\flat), \quad \xi \in \T \partial M.\quad \label{eq:boundary-connection}
\end{align}
Let 
\begin{equation}
\BdDirac \colon \Ct^\infty(\partial M,S^\partial) \to \Ct^\infty(\partial M,S^\partial), \quad \BdDirac = \sum_{i=1}^{n-1} \clm^\partial(e^i) \nabla^\partial_{e_i} \label{eq:canonical-boundary-operator}
\end{equation} denote the corresponding Dirac operator.
This is the \emph{canonical boundary operator} associated to the Dirac bundle \(S \to M\).
Note that \(\BdDirac\) is even with respect to the grading on \(S^\partial\) restricted from the grading on \(S\).
It satisfies
\begin{equation}
  \BdDirac \clm(\nu^\flat) = -\clm(\nu^\flat) \BdDirac,
\end{equation}
and, if \(S \to M\) is endowed with the structure of a relative Dirac bundle, then
\begin{equation}
  \BdDirac \RelDiracInv = \RelDiracInv \BdDirac, \qquad \chi \BdDirac = - \BdDirac \chi, \label{eq:chirality-anti-commutes}
\end{equation}
where \(\chi\) is defined as in \labelcref{eq:boundary-involution} with respect to any choice of signs.
This implies that, if \(u \in \Ct^\infty(M, S)\) satisfies the boundary condition~\labelcref{eq:boundary-conditions}, then
\begin{equation}
  \langle u|_{\partial M}, \BdDirac u|_{\partial M} \rangle = 0.
  \label{eq:BdDirac-vanishes-under-bd-condition}
\end{equation}
Moreover, 
\begin{align} \BdDirac  &= \frac{n-1}{2} \mean  - \clm(\nu^\flat)\sum_{i=1}^{n-1}\clm(e^i) \nabla_{e_i}  \nonumber \\
  &=\frac{n-1}{2} \mean  - \clm(\nu^\flat) \Dirac - \nabla_\nu , \label{eq:boundary-dirac-vs-mean}
  \end{align}
  where \(\mean\) is the mean curvature of \(\partial M\) with respect to \(\nu\).
  To avoid any confusion about signs and normalization, let us be explicit about our convention for the mean curvature:
\begin{equation}
\mean = \frac{1}{n-1} \tr (-\nabla \nu) = \frac{1}{n-1} \sum_{i=1}^{n-1} \langle e_i, - \nabla_{e_i} \nu \rangle = \frac{1}{n-1} \sum_{i=1}^{n-1} \II(e_i, e_i). \label{eq:mean_convention}
\end{equation}
Here \(-\nabla \nu\) is the shape operator, \(\II\) denotes the second fundamental form and we use a local orthonormal frame \(e_1, \dotsc, e_{n-1}\) on \(\partial M\).
\end{itemize}

Finally, in the remainder of this section, we discuss two concrete geometric examples of relative Dirac bundles which are relevant for our main results.
\begin{ex} \label{ex:Gromov-Lawson-Relative}
  Let \(M\) be a compact even-dimensional Riemannian spin manifold with boundary and let $\slashed S_M=\slashed S_M^+\oplus \slashed S_M^-$ be the associated $\Z/2$-graded complex spinor bundle endowed with the Levi-Civita connection.
  Let \(E, F \to M\) be a pair of Hermitian bundles equipped with metric connections.
  Note that the bundle
  \[
  S \coloneqq \slashed S_M \tensgr \bigl(E \oplus F^{\op}\bigr)
  \]
  is a $\Z/2$-graded Dirac bundle, where the grading $S=S^+\oplus S^-$ is given by
  \[
  S^+\coloneqq\bigl( \slashed S_M^+\tensor E\bigr)\oplus\bigl(\slashed S_M^-\tensor F\bigr)\qquad\text{and}\qquad
  S^-\coloneqq\bigl(\slashed S_M^+\tensor F\bigr)\oplus\bigl(\slashed S_M^-\tensor E\bigr).
  \]
  In analogy with Gromov and Lawson~\cite{GromovLawson:PSCDiracComplete}, we make the following assumption.
  \begin{texteqn}\label{GromovLawsonAssumption}
  There exists a compact set \(K \subset \interior{M}\) and a parallel unitary bundle isomorphism \(\mathfrak{t} \colon E|_{M \setminus K} \to F|_{M \setminus K}\) which extends to a smooth bundle map on a neighborhood of \(\overline{M \setminus K}\).
  \end{texteqn}
  In this case, we say that $(E,F)$ is a \emph{GL pair} with \emph{support $K$}.
  Note that Condition~\eqref{GromovLawsonAssumption} implies that \(S\) is a relative Dirac bundle with involution
  \[
   \RelDiracInv \coloneqq \id_{\ReducedSpinBdl_M} \tensgr \begin{pmatrix}0 & \mathfrak{t}^\ast \\ \mathfrak{t} & 0 \end{pmatrix} \colon S|_{M \setminus K} \to S|_{M \setminus K},
  \]
  where \enquote{\(\tensgr\)} stands for the graded tensor product of operators, compare~\cite[Appendix~A]{Higson-Roe:KHomology}.
  This means \(\RelDiracInv((u_+ \oplus u_-) \otimes (e \oplus f)) = (u_+ \oplus - u_-) \otimes (\mathfrak{t}^\ast f \oplus \mathfrak{t} e)\) for sections \(u_\pm \in \Ct^\infty(M, \ReducedSpinBdl_M^\pm)\), \(e \in \Ct^\infty(M, E)\), \(f \in \Ct^\infty(M, F)\).
    The Dirac operator on $S$ is described as follows.
  Let $\ReducedSpinDirac_E$ and $\ReducedSpinDirac_F$ be the operators obtained by twisting the complex spin Dirac operator on $M$ respectively with the bundles $E$ and $F$.
  Observe that $\ReducedSpinDirac_E$ and $\ReducedSpinDirac_F$ are odd operators, that is, they are of the form
  \[
    \ReducedSpinDirac_E=\begin{pmatrix}0&\ReducedSpinDirac_E^-\\\ReducedSpinDirac_E^+&0\end{pmatrix}\qquad\text{and}\qquad
    \ReducedSpinDirac_F=\begin{pmatrix}0&\ReducedSpinDirac_F^-\\\ReducedSpinDirac_F^+&0\end{pmatrix},
  \]
where $\ReducedSpinDirac_E^+\colon\Cc^\infty(M,\slashed S^+\tensor E)\to \Cc^\infty(M,\slashed S^-\tensor E)$, 
$\ReducedSpinDirac_F^+\colon\Ct^\infty(M,\slashed S^+\tensor F)\to \Ct^\infty(M,\slashed S^-\tensor F)$ and $\ReducedSpinDirac_E^-$, $\ReducedSpinDirac_F^-$ are formally adjoint respectively to $\ReducedSpinDirac_E^+$, $\ReducedSpinDirac_F^+$.
The Dirac operator on $S$ is given by
\begin{equation}
    \Dirac=\begin{pmatrix}
    0&\Dirac^-\\\Dirac^+&0
    \end{pmatrix}\colon\Ct^\infty(M,S)\to\Ct^\infty(M,S),
\end{equation}
where $\Dirac^\pm\colon\Ct^\infty(M,S^\pm)\to\Ct^\infty(M,S^\mp)$ are the operators defined as
\begin{equation}\label{E:LNP1}
	\Dirac^+\coloneqq
	\begin{pmatrix}0 &\ReducedSpinDirac_F^- \\ \ReducedSpinDirac_E^+ & 0 	\end{pmatrix}
	\qquad\text{and}\qquad
		\Dirac^-\coloneqq
	\begin{pmatrix}0 &\ReducedSpinDirac_E^- \\ \ReducedSpinDirac_F^+ & 0 	\end{pmatrix}.
\end{equation}
Moreover, the curvature endomorphism~\(\Curv\) from \labelcref{Weitzenbock} is given by
\begin{equation}\label{E:GL-Lichnerowicz}
    \Curv = \frac{\scal_g}{4}+\Curv^{E\oplus F},
\end{equation}
 where $\mathcal{R}^{E\oplus F} = \sum_{i < j} \clm(e^i)\clm(e^j) (\id_{\ReducedSpinBdl_M} \tensor \FullCurv^{\nabla^{E \oplus F}}_{e_i, e_j})$ is an even endomorphism of the bundle $S$ which depends linearly on the curvature of the connection on $E \oplus F$; compare~\cite[Theorem~8.17]{LawsonMichelsohn:SpinGeometry}.
\end{ex}

\begin{ex} \label{ex:SimpleBand}
  Let $(V,g)$ be an odd-dimensional Riemannian spin band and let \(\ReducedSpinBdl_V \to V\) be the associated complex spinor bundle endowed with the connection induced by the Levi-Civita connection.
  Let \(E \to M\) be a Hermitian bundle equipped with a metric connection.
  Then \(S \coloneqq \bigl(\ReducedSpinBdl_V\tensor E\bigr) \oplus \bigl(\ReducedSpinBdl_V\tensor E\bigr)\) is a $\Z/2$-graded Dirac bundle with Clifford multiplication
  \[\clm \coloneqq \begin{pmatrix} 0 & \clm_{\ReducedSpinBdl}\tensor \id_E \\ \clm_{\ReducedSpinBdl}\tensor \id_E & 0 \end{pmatrix},\]
  where \(\clm_{\ReducedSpinBdl}\) is the Clifford multiplication on \(\ReducedSpinBdl_V\).
  Moreover, $S$ turns into a relative Dirac bundle with involution
  \begin{equation}\label{E:GLepsilon}
    \RelDiracInv \coloneqq \begin{pmatrix} 0 & -\iu \\ \iu & 0 \end{pmatrix} 
  \end{equation}
  globally defined on $V$ (that is, the support is empty).
  The Dirac operator on $S$ is given by
  \[
    \Dirac = \begin{pmatrix} 0 & \ReducedSpinDirac_E \\ \ReducedSpinDirac_E & 0 \end{pmatrix},
  \]
  where $\ReducedSpinDirac_E\colon\Ct^\infty(V,\slashed S_V\tensor E)\to \Ct^\infty(V,\slashed S_V\tensor E)$ is the spinor Dirac operator on $(V,g)$ twisted with the bundle $E$.
  As in the previous example, the curvature term from~\labelcref{Weitzenbock} is of the form
  \begin{equation}\label{E:Band-Lichnerowicz}
    \Curv =  \frac{\scal_g}{4}+\Curv^{E},
  \end{equation}
  where \(\Curv^E = \sum_{i < j} \clm(e^i) \clm(e^j) (\id_{\ReducedSpinBdl_V \oplus \ReducedSpinBdl_V} \otimes \FullCurv^{\nabla^E}_{e_i, e_j})\).
\end{ex}
\section{Index theory for relative Dirac bundles} \label{sec:index-theory}
In this section, we introduce the Callias operators associated to relative Dirac bundles and review the necessary analysis to develop an index theory for them.

We again start by briefly fixing the notation we are going to use.
Let \(E \to M\) be a Hermitian vector bundle over a smooth manifold \(M\) with \emph{compact} boundary \(\partial M\).
The $\Lp^2$-inner product of two sections $u$, $v\in \Ct^\infty_\textrm{c}(M,E)$ is defined by
\[
    (u,v) \coloneqq\int_M\left<u,v\right>\vol_M.
\]
The corresponding \(\Lp^2\)-norm will be denoted by \(\|u\| \coloneqq (u,u)^{1/2}\).
The space of \emph{square-integrable sections}, denoted by \(\Lp^2(M, E)\), can be identified with the completion of \(\Ct^\infty_\cpt(M,E)\) with respect to this norm.
We denote the space of \emph{locally square-integrable sections} by \(\Lp^2_\loc(M,E)\).
Similarly, the latter comes endowed with the family of semi-norms \(\|\blank\|_K\) ranging over compact subsets \(K \subseteq M\), where \(\|u\|_K \coloneqq \left(\int_K \langle u, u \rangle \vol_M \right)^{1/2}\).

We will also use Sobolev spaces to a limited extent; see for instance~\cite[Chapter~4]{Taylor:PDE1} for a detailed introduction.
The Sobolev space \(\SobolevH^1_{\loc}(M, E)\) consists of all sections \(u\in\Lp^2_\loc(M,E)\) such that \(\nabla u\), a priori defined as a distributional section over the interior of \(M\), is also represented by a locally square integrable section.
The space \(\SobolevH^1_{\loc}(M, E)\) is topologized using the family of semi-norms \(\|\blank\|_{\SobolevH^1_K}\) defined by \(\|u\|_{\SobolevH^1_{K}}^2 \coloneqq \|u\|^2_K + \|\nabla u\|_K^2\), where \(K\) ranges over all compact subsets of \(M\).
Since the boundary is assumed to be compact, the restriction \(\Ct^\infty_\cpt(M, E) \to \Ct^\infty(\partial M, E|_{\partial M})\), \(u \mapsto u|_{\partial M}\) extends to a continuous linear operator \(\tau \colon \SobolevH^1_\loc(M, E) \to \SobolevH^{1/2}(\partial M, E|_{\partial M})\) by the trace theorem; see e.g.~\cite[Chapter~4, Proposition~4.5]{Taylor:PDE1}.
Here \(\SobolevH^{1/2}(\partial M, E|_{\partial M})\) denotes a fractional Sobolev space for instance in the sense of~\cite[Chapter~4, Section~3]{Taylor:PDE1}.
An equivalent (and in our setup more relevant) description in the presence of a suitable Dirac operator on the boundary is given in~\cite[Section~4.1]{Baer-Ballmann:Guide-Boundary-value-problems-Dirac}.

Next we turn to the main player, the Callias operator associated to a relative Dirac bundle and a suitable potential function.
To this end, fix a complete Riemannian manifold $M$ with compact boundary \(\partial M\) and \(S \to M\) a relative Dirac bundle with support $K$.
This means that $K\subset \interior M$ is compact and the involution \(\RelDiracInv\) is defined on \(M \setminus K\); see \cref{D:relative Dirac bundle}.
\begin{defi}
A Lipschitz function \(\psi \colon M \to \R\) is called an \emph{admissible potential} if \(\psi=0\) on \(K\) and there exists a compact set $K \subseteq L \subseteq M$ such that $\psi$ is equal to a nonzero constant on each component of $M\setminus L$.
\end{defi}
Let $\psi$ be an admissible potential.
Then \(\psi \RelDiracInv\) extends by zero to a continuous bundle map on all of \(M\).
The \emph{Callias operator} associated to these data is the differential operator 
\begin{equation}
  \Callias_{\psi} \coloneqq \Dirac + \psi \RelDiracInv.  \label{eq:defi_callias}
\end{equation}
Since \(\psi\) is Lipschitz, the commutator $[\Dirac,\psi]$ extends to a bounded operator on \(\Lp^2(M, S)\) (compare e.g.~\cite[Lemma~3.1]{Baer-Ballmann:Boundary-value-problems-first-order}).
In fact, by Rademacher's theorem~\cite{Rademacher:Lipschitz}, the differential \(\D \psi\) exists almost everywhere and is an \(\Lp^\infty\)-section of the cotangent bundle.
In this view, the commutator is given by the formula $[\Dirac,\psi]=\clm(\D \psi)$ as an element of \(\Lp^\infty(M,\End(S))\).
Using that the involution \(\RelDiracInv\) anti-commutes with the Dirac operator, a direct computation then yields
\begin{equation}
  \Callias_{\psi}^2 = \Dirac^2 + \clm(\D \psi) \RelDiracInv + \psi^2.\label{eq:square_of_callias}
\end{equation}
To be very precise, this expression implicitly uses the requirement in~\cref{D:relative Dirac bundle} that \(\RelDiracInv\) extends to a smooth section on a neighborhood of \(\overline{M \setminus K}\).
However, as \(\D \psi = 0\) on the interior of \(K\), we only need the values of the continuous extension of \(\RelDiracInv\) to \(\overline{M \setminus K}\) to specify \(\clm(\D \psi) \RelDiracInv\), and this is only relevant if the topological boundary of \(K\) has positive Lebesgue measure.

Observe that the distance function $x$ from any fixed subset of $M$ is $1$-Lipschitz so that $|\D x|\leq 1$ almost everywhere.
In our applications, we will consider potentials of the form $\psi=Y\circ x$, where $Y$ is a smooth function on $\R$ and $x$ is a distance function from a geometrically relevant compact region of $M$.

For a choice of signs \(s \colon \partial M \to \{\pm 1\}\), we let \(\Callias_{\psi, s}\) denote the operator \(\Callias_\psi\) on the domain
\[
  \Css^\infty(M,S)\coloneqq  \{u \in \Cc^\infty(M, S) \mid \chi(u|_{\partial M}) = u|_{\partial M}\},
\]
where \(\chi \coloneqq s \clm(\nu^\flat) \RelDiracInv \) as in \labelcref{eq:boundary-involution}.

Note that, by definition, \(S = S^+ \oplus S^-\) is \(\Z/2\)-graded and the differential operator \(\Callias_{\psi}\) can be written as
\[
  \Callias_{\psi} =  \begin{pmatrix} 0 & \Callias_{\psi}^- \\ \Callias_{\psi}^+ & 0 \end{pmatrix},
\]
where \(\Callias_{\psi}^\pm\) are differential operators \(\Ct^\infty(M, S^\pm) \to \Lp^2(M, S^\mp)\).
Since \(\chi\) is even with respect to the grading, \(\Callias_{\psi, s}\) also decomposes similarly into operators \[\Callias_{\psi, s}^\pm \colon \{u \in \Cc^\infty(M, S^\pm) \mid \chi (u|_{\partial M}) = u|_{\partial M}\} \to \Lp^2(M, S^\mp).\]
Note that---even ignoring regularity---\(\Callias_{\psi, s}\) does \emph{not} preserve its domain.

In the following, we will discuss the necessary analysis and index theory for these operators.
We will mostly rely on the general framework of elliptic boundary value problems for Dirac-type operators due to \citeauthor{Baer-Ballmann:Guide-Boundary-value-problems-Dirac}~\cite{Baer-Ballmann:Guide-Boundary-value-problems-Dirac,Baer-Ballmann:Boundary-value-problems-first-order}.
However, as we will need to allow potentials which are a priori only Lipschitz for some of our applications, we will take a slight detour and first apply the results of \citeauthor{Baer-Ballmann:Guide-Boundary-value-problems-Dirac} only to the Dirac operator \(\Dirac\) and Callias operators \(\Callias_{\psi}\) with smooth potentials before deducing the desired statements for the general case.

We first observe that the canonical boundary Dirac operator \(\BdDirac\) from \labelcref{eq:canonical-boundary-operator} is an adapted operator for \(\Dirac\) in the sense of \cite[Section~3.2]{Baer-Ballmann:Guide-Boundary-value-problems-Dirac}.
  Since \(\chi\) anti-commutes with \(\BdDirac\) (see \labelcref{eq:chirality-anti-commutes}), \(\chi\) is a \emph{boundary chirality} in the sense of \cite[Exampe~4.20]{Baer-Ballmann:Guide-Boundary-value-problems-Dirac}.
  Thus \(B_\chi \coloneqq \SobolevH^{1/2}(\partial M, S_{\partial M}^{\chi})\), where \(S_{\partial M}^{\chi}\) is the \(+1\)-eigenbundle of \(\chi\), is an elliptic boundary condition in the sense of \citeauthor{Baer-Ballmann:Guide-Boundary-value-problems-Dirac}.
  Moreover, it is a consequence of \cref{lem:Lagrangian} that the adjoint boundary condition of \(B_\chi\) (see \cite[Theorem~4.6]{Baer-Ballmann:Guide-Boundary-value-problems-Dirac}, \cite[Section~7.2]{Baer-Ballmann:Boundary-value-problems-first-order}) is \(B_\chi\) itself (the crucial ingredient here is that \(\chi\) anti-commutes with \(\clm(\nu^\flat)\)).
  In other words, \(B_\chi\) is a self-adjoint boundary condition, and thus \(\Dirac\) is essentially self-adjoint on the domain \(\Css^\infty(M,S)\) by \cite[Theorem~4.11]{Baer-Ballmann:Guide-Boundary-value-problems-Dirac}.
  Moreover, \cite[Lemma~7.3]{Baer-Ballmann:Boundary-value-problems-first-order} implies that the domain of the closure is given by the Sobolev space
\[
  \SobolevH^1_{\RelDiracInv, s}(M, S) \coloneqq \{u \in \SobolevH^1_\loc(M, S) \cap \Lp^2(M, S) \mid \Dirac u \in \Lp^2(M,S), \RelDiracInv \tau(u) = s \tau(u) \}.
\]
We will endow \(\SobolevH^1_{\RelDiracInv, s}(M, S)\) with the norm defined by
\begin{equation}
  \|u\|_{\SobolevH^1_{\RelDiracInv, s}(M, S)}^2 \coloneqq \|u\|^2 + \|\Dirac u\|^2. \label{eq:Dirac-graph-norm}
\end{equation}
It also follows from ellipticity (also of the boundary condition) that the inclusion \(\SobolevH^1_{\RelDiracInv, s}(M, S) \hookrightarrow \SobolevH^1_\loc(M, S)\) is continuous.
In particular, the trace operator is continuous on \(\SobolevH^1_{\RelDiracInv, s}(M, S)\).
The next two lemmas allow to also describe the domain of the closure of \(\Callias_{\psi, s}\), temporarily denoted by \(\bar\Callias_{\psi,s}\), and show that compact perturbations of the potential do not alter any relevant properties.

\begin{lem}\label{domain-does-not-change-lemma}
  Let $\psi$ be an admissible potential.
  Then \(\bar\Callias_{\psi,s}\) is self-adjoint and $\dom\bigl(\bar\Callias_{\psi,s}\bigr)=\SobolevH^1_{\RelDiracInv, s}(M, S)$.
  Moreover, the graph norm induced by $\bar\Callias_{\psi,s}$ is equivalent to the norm given by \labelcref{eq:Dirac-graph-norm}.
\end{lem}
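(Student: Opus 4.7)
The plan is to write $\Callias_{\psi} = \Dirac + V$ with $V \coloneqq \psi \RelDiracInv$ and exhibit $V$ as a bounded self-adjoint perturbation; everything then follows from the Kato--Rellich theorem together with the self-adjointness of the closure of $\Dirac|_{\Css^\infty(M,S)}$ already recorded in the text.

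First, I would check that $V$ extends to a bounded self-adjoint multiplication operator on $\Lp^2(M,S)$. Since $\psi$ is Lipschitz and constant on each component of $M \setminus L$ for some compact $L$, it is globally bounded on $M$. On $M \setminus K$ the endomorphism $\psi\RelDiracInv$ is pointwise self-adjoint with operator norm $\leq |\psi|$, because $\RelDiracInv$ is a self-adjoint involution. Since $\psi \equiv 0$ on $K$ and $\psi$ is continuous, the product $\psi\RelDiracInv$ extends by zero across the topological boundary of $K$ to a bounded, measurable, pointwise self-adjoint bundle endomorphism on all of $M$ with $\Lp^\infty$ norm $\leq \|\psi\|_\infty$. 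Multiplication by this endomorphism is therefore a bounded self-adjoint operator on $\Lp^2(M,S)$; this is the only mildly subtle point, and it is precisely the reason for the extension requirement built into \cref{D:relative Dirac bundle}.

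Next, the discussion preceding the lemma shows (via the Bär--Ballmann theory) that $\Dirac$ restricted to $\Css^\infty(M,S)$ is essentially self-adjoint with closure $\bar{\Dirac}$ having domain $\SobolevH^1_{\RelDiracInv,s}(M,S)$. The Kato--Rellich theorem then yields that $\bar{\Dirac} + V$ is self-adjoint on the same domain. Because $\Css^\infty(M,S)$ is a core for $\bar{\Dirac}$ and $V$ is bounded, $\Css^\infty(M,S)$ is also a core for $\bar{\Dirac} + V$; hence $\bar{\Callias}_{\psi,s} = \bar{\Dirac} + V$ is self-adjoint with $\dom(\bar{\Callias}_{\psi,s}) = \SobolevH^1_{\RelDiracInv,s}(M,S)$.

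Finally, the graph norm equivalence follows from $\|V u\| \leq \|\psi\|_\infty \|u\|$ and two applications of the triangle inequality:
\[
  \|\Dirac u\| - \|\psi\|_\infty \|u\| \;\leq\; \|\Callias_\psi u\| \;\leq\; \|\Dirac u\| + \|\psi\|_\infty \|u\|,
\]
so the norm \labelcref{eq:Dirac-graph-norm} and the graph norm induced by $\bar{\Callias}_{\psi,s}$ are equivalent. No serious obstacle arises beyond the extension-by-zero observation for $V$; the whole statement reduces to a bounded perturbation argument once the Bär--Ballmann identification of $\dom(\bar{\Dirac})$ is in hand.
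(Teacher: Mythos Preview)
Your proof is correct and follows essentially the same approach as the paper: both observe that $\Callias_\psi - \Dirac = \psi\RelDiracInv$ defines a bounded self-adjoint operator on $\Lp^2(M,S)$, from which the claims follow immediately. The paper states this in one sentence without naming Kato--Rellich (a bounded self-adjoint perturbation of a self-adjoint operator is trivially self-adjoint on the same domain), while you have spelled out the details including the extension-by-zero point and the graph-norm inequalities.
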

\begin{proof}
This is a direct consequence of the fact that the difference $\Callias_{\psi}-\Dirac=\psi\RelDiracInv$ is in $\Lp^\infty(M,\End(S))$ and (fiberwise) self-adjoint, and thus extends to a self-adjoint bounded operator on $\Lp^2(M,S)$.
\end{proof}

From now on, we will drop the notational distinction between \(\Callias_{\psi,s}\) and its closure \(\bar \Callias_{\psi,s}\), and simply view \(\Callias_{\psi,s}\) to be defined on \(\SobolevH^1_{\RelDiracInv, s}(M, S)\) unless specified otherwise.

\begin{lem}\label{L:Lipschitz->smooth}
  Let $\psi_1$ and $\psi_2$ be admissible potentials coinciding outside a compact set $L$.
  Then $\Callias_{\psi_2}-\Callias_{\psi_1}$ defines a compact operator $\SobolevH^1_{\RelDiracInv, s}(M, S)\to \Lp^2(M,S)$.
  In particular, viewed as maps $\SobolevH^1_{\RelDiracInv, s}(M, S) \to \Lp^2(M, S)$, the operator $\Callias_{\psi_1,s}$ is Fredholm if and only if $\Callias_{\psi_2,s}$ is.
\end{lem}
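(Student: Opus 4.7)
The plan is to note that $\Callias_{\psi_2} - \Callias_{\psi_1} = \varphi \RelDiracInv$ with $\varphi \coloneqq \psi_2 - \psi_1$, and to observe that $\varphi$ is Lipschitz and vanishes outside the compact set $L$, so it is globally bounded with compact support contained in $L$. Since each $\psi_i$ vanishes on $K$, the product $T \coloneqq \varphi \RelDiracInv$ extends continuously by zero to all of $M$ and defines a bounded, compactly supported endomorphism-valued multiplier acting on $\Lp^2(M,S)$.

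Next I would establish compactness of $T$ as a map $\SobolevH^1_{\RelDiracInv, s}(M, S) \to \Lp^2(M, S)$ by factoring it through a local Rellich embedding. Choose a compact submanifold-with-boundary $L' \subset M$ whose interior (in $M$) contains $L$; such an $L'$ exists because $\partial M$ is compact and $L$ is compact. The continuous inclusion $\SobolevH^1_{\RelDiracInv, s}(M, S) \hookrightarrow \SobolevH^1_{\loc}(M, S)$ recorded in the text, followed by restriction to $L'$, yields a bounded map $\SobolevH^1_{\RelDiracInv, s}(M, S) \to \SobolevH^1(L', S|_{L'})$. The Rellich--Kondrachov theorem on the compact manifold-with-boundary $L'$ then provides a compact inclusion into $\Lp^2(L', S|_{L'})$, and pointwise multiplication by $\varphi \RelDiracInv$ followed by extension by zero defines a bounded map $\Lp^2(L', S|_{L'}) \to \Lp^2(M, S)$; here one crucially uses that $\supp \varphi \subseteq L$ lies in the interior of $L'$, so no boundary terms are produced by the extension. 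The composition of a bounded map, a compact map, and a bounded map realizes $T$, which is therefore compact.

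For the \enquote{in particular} clause, \cref{domain-does-not-change-lemma} ensures that both $\Callias_{\psi_i, s}$ are bounded as operators $\SobolevH^1_{\RelDiracInv, s}(M, S) \to \Lp^2(M, S)$, and by the above they differ by the compact operator $T$. Hence one is Fredholm if and only if the other is, by stability of the Fredholm property under compact perturbations (Atkinson's theorem). The only mildly subtle point in the whole argument is ensuring that $L'$ has enough regularity at its boundary for Rellich--Kondrachov to apply; however, any tubular thickening of a large compact exhausting piece of $M$ yields piecewise smooth boundary, which is more than sufficient. Beyond this routine regularity observation, the argument is a standard functional-analytic exercise.
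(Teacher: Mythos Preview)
Your proof is correct and follows essentially the same approach as the paper: factor the difference $\varphi\RelDiracInv$ through the continuous inclusion $\SobolevH^1_{\RelDiracInv,s}(M,S)\hookrightarrow\SobolevH^1_\loc(M,S)$, restrict to a compact set containing $\supp\varphi$, and invoke Rellich. The only cosmetic difference is that the paper first shows $\eta\RelDiracInv$ maps $\SobolevH^1_{\RelDiracInv,s}$ into itself (via the estimate $\|\Dirac(\eta\RelDiracInv u)\|\leq \|\eta\|_\infty\|\Dirac u\|+\|\dd\eta\|_\infty\|u\|$) before restricting, whereas you restrict first and multiply only at the $\Lp^2$ level---your ordering is arguably slightly cleaner since it never needs $\dd\varphi$ to be bounded.
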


\begin{proof}
Observe that, that, since $\psi_1$ and $\psi_2$ are Lipschitz and coincide in $M\setminus L$, both \(\eta \coloneqq \psi_2 - \psi_1\) and its differential $\dd \eta $ are essentially bounded and supported in the compact set $L$.
Together with the estimate 
\begin{align*}
 \|\Dirac (\Callias_{\psi_2}-\Callias_{\psi_1})u\| &= \| \Dirac \eta \RelDiracInv u \|\\
 &\leq \|\eta \RelDiracInv \Dirac u\| + \|\clm(\dd \eta) u \| \\
 &\leq  \|\eta\|_{\infty} \|\Dirac u\| + \|\dd \eta\|_{\infty} \|u\|,
\end{align*}
this implies that $\Callias_{\psi_2}-\Callias_{\psi_1}= \eta \RelDiracInv$ defines a bounded operator $\SobolevH^1_{\RelDiracInv, s}(M, S)\to \SobolevH^1_{\RelDiracInv, s}(M, S)$.
Moreover, since \(\eta\) is supported in the compact subset \(L\) and using continuity of the inclusion \(\SobolevH^1_{\RelDiracInv, s}(M, S) \hookrightarrow \SobolevH^1_\loc(M, S)\), this actually means that $\Callias_{\psi_2}-\Callias_{\psi_1}$ is a bounded operator $\SobolevH^1_{\RelDiracInv, s}(M, S)\to \SobolevH^1(L, S)$.
Finally, since the inclusion $\SobolevH^1(L,S)\hookrightarrow \Lp^2(M,S)$ is compact by the Rellich lemma, $\Callias_{\psi_2}-\Callias_{\psi_1}$ yields a compact operator $\SobolevH^1_{\RelDiracInv, s}(M, S)\to \Lp^2(M,S)$.
The second claim follows from classical properties of Fredholm operators.
\end{proof}

\begin{thm} \label{thm:selfadjoint-and-fredholm}
  Let \(M\) be a complete Riemannian manifold with compact boundary \(\partial M\) and \(S \to M\) be a relative Dirac bundle.
  Let \(\psi \colon M \to \mathbb{R}\) be an admissible potential and \(s \colon \partial M \to \{\pm 1\}\) a choice of signs.
  Then the operator \(\Callias_{\psi, s}\) is self-adjoint and Fredholm.
\end{thm}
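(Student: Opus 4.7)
Self-adjointness of $\Callias_{\psi,s}$ is already contained in \cref{domain-does-not-change-lemma}, so the real content of the statement is the Fredholm property. My plan is twofold: first reduce to the case where $\psi$ is smooth, and then establish Fredholmness via coercivity at infinity for the smooth Callias operator. For the reduction, I would mollify $\psi$ only on a compact neighborhood of the compact set $L$ outside of which $\psi$ is locally constant, producing a smooth admissible potential $\psi_0$ that still vanishes on $K$ and that agrees with $\psi$ outside a compact set. By \cref{L:Lipschitz->smooth}, $\Callias_{\psi,s}$ is Fredholm if and only if $\Callias_{\psi_0,s}$ is, so henceforth one may assume $\psi$ itself is smooth.

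With $\psi$ smooth, the plan is to combine Green's formula~\eqref{eq:Dirac-Green} with the Bochner-type identity \eqref{eq:square_of_callias} to extract a quadratic identity of the form
\[
  \|\Callias_\psi u\|^2 = \|\Dirac u\|^2 + \langle u, \clm(\D\psi)\RelDiracInv u\rangle + \|\psi u\|^2 + \mathcal{B}(u),
\]
valid for $u \in \SobolevH^1_{\RelDiracInv,s}(M,S)$, where $\mathcal{B}(u)$ is a boundary contribution controlled by $\|\psi\|_\infty\|u\|^2_{\Lp^2(\partial M)}$. By admissibility, $\clm(\D\psi)\RelDiracInv$ is compactly supported and $\psi^2 \geq c^2 > 0$ on the complement of some compact set $L'$. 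Introducing a cut-off equal to $1$ on a compact neighborhood of $L' \cup \partial M$ and combining the trace inequality with Rellich compactness of the continuous embedding $\SobolevH^1_{\RelDiracInv,s}(M,S) \hookrightarrow \SobolevH^1_{\loc}(M,S)$, I would derive the semi-Fredholm estimate
\[
  \|u\|^2 \leq C\bigl(\|\Callias_\psi u\|^2 + \|u\|^2_{\Lp^2(L'')}\bigr)
\]
for some compact $L'' \subset M$. This forces the kernel to be finite-dimensional and the range closed; self-adjointness from \cref{domain-does-not-change-lemma} then identifies cokernel with kernel, yielding the Fredholm property.

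The step I expect to be most delicate is the handling of the cross term $\langle \Dirac u, \psi\RelDiracInv u\rangle$ when integrating by parts, because \(\RelDiracInv\) anti-commutes with the boundary chirality (see the computation before \eqref{eq:chirality-anti-commutes}), so $\psi \RelDiracInv u$ does \emph{not} satisfy the boundary condition~\eqref{eq:boundary-conditions} even when $u$ does. Consequently, the resulting boundary term is a genuine nonzero quadratic form on $u|_{\partial M}$ that must be absorbed using the $\Lp^2(\partial M)$-trace theorem into the compact remainder. A cleaner---but less self-contained---alternative is to regard $\Callias_{\psi_0}$ as a smooth Dirac-type operator with the elliptic self-adjoint boundary condition $B_\chi$ supplied by \cref{lem:Lagrangian}, invoke Bär--Ballmann's general framework to bypass the integration-by-parts bookkeeping, and conclude Fredholmness directly from the Callias coercivity-at-infinity condition $|\psi_0|\geq c > 0$ off a compact set.
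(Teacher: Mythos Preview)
Your proposal is correct. The reduction to smooth $\psi$ via \cref{L:Lipschitz->smooth} and the self-adjointness from \cref{domain-does-not-change-lemma} match the paper exactly. For the Fredholm property, the paper takes precisely what you call the ``cleaner---but less self-contained---alternative'': it observes that the smooth $\Callias_\psi$ shares its principal symbol with $\Dirac$, so the same adapted boundary operator $\BdDirac$ and the same elliptic self-adjoint boundary condition $B_\chi$ apply, and then invokes B\"ar--Ballmann's coercivity-at-infinity criterion \cite[Theorem~5.3]{Baer-Ballmann:Guide-Boundary-value-problems-Dirac} directly from $\psi^2 - |\D\psi|$ being uniformly positive outside a compact set.

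Your primary route---deriving the semi-Fredholm estimate by hand---also works and is more self-contained. One small remark: your worry about $\psi\RelDiracInv u$ not satisfying the boundary condition is slightly beside the point, since Green's formula~\eqref{eq:Dirac-Green} holds for arbitrary compactly supported smooth sections; what actually matters is that for $u$ satisfying~\eqref{eq:boundary-conditions} the boundary term computes to $\int_{\partial M} s\psi\,|u|^2$, which you then correctly absorb via compactness of the trace $\SobolevH^1_{\RelDiracInv,s}(M,S)\to \SobolevH^{1/2}(\partial M)\hookrightarrow \Lp^2(\partial M)$. The paper's approach buys brevity at the cost of quoting the B\"ar--Ballmann machinery; your direct approach buys transparency at the cost of the extra bookkeeping you anticipated.
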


\begin{proof}
  Since for any given admissible potential, we can always find a smooth admissible potential which agrees with the original one outside a compact subset, \cref{domain-does-not-change-lemma,L:Lipschitz->smooth} imply that we can assume without loss of generality that $\psi$ is smooth.
  As \(\Callias_\psi\) has the same principal symbol as \(\Dirac\), the canonical boundary Dirac operator \(\BdDirac\) from \labelcref{eq:canonical-boundary-operator} is also an adapted operator for \(\Callias_\psi\) in the sense of \cite[Section~3.2]{Baer-Ballmann:Guide-Boundary-value-problems-Dirac}.
  Thus the discussion in the paragraphs preceding \cref{domain-does-not-change-lemma} applies verbatim with \(\Callias_\psi\) replaced by \(\Dirac\).
  Furthermore, since \(\Callias_\psi^2 = \Dirac^2 + \clm(\D \psi) \RelDiracInv + \psi^2\) and \(\psi^2 - |\D \psi|\) is uniformly positive outside a compact subset because \(\psi\) is admissible, the operator \(\Callias_\psi\) is coercive at infinity in the sense of \cite[Definition~5.1]{Baer-Ballmann:Guide-Boundary-value-problems-Dirac}.
  Thus \(\Callias_{\psi, s}\) is a Fredholm operator by \cite[Theorem~5.3]{Baer-Ballmann:Guide-Boundary-value-problems-Dirac}.
\end{proof}

In particular, we obtain an index
\[
    \ind(\Callias_{\psi, s}) \coloneqq \ind(\Callias_{\psi, s}^+) \coloneqq \dim \ker(\Callias_{\psi, s}^+) - \dim \ker (\Callias_{\psi, s}^-) \in \Z.
\]
As another immediate consequence of \cref{L:Lipschitz->smooth}, we obtain the following statement.
\begin{lem} \label{lem:potential-agrees-outside-compact}
  Let \(M\) be a complete Riemannian manifold with compact boundary \(\partial M\) and \(S \to M\) be a relative Dirac bundle.
  Let \(\psi_1, \psi_2 \colon M \to \mathbb{R}\) be two admissible potentials and \(s \colon \partial M \to \{\pm 1\}\) a choice of signs.
  Suppose that \(\psi_1\) and \(\psi_2\) agree outside a compact subset of \(M\).
  Then
  \[
    \ind(\Callias_{\psi_1, s}) = \ind(\Callias_{\psi_2, s}).
  \]
\end{lem}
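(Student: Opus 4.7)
The plan is to invoke \cref{L:Lipschitz->smooth} directly and reduce to the classical fact that the Fredholm index is invariant under compact perturbations. Concretely, since \(\psi_1\) and \(\psi_2\) agree outside a compact subset of \(M\), \cref{L:Lipschitz->smooth} gives that the difference
\[
  \Callias_{\psi_2} - \Callias_{\psi_1} = (\psi_2 - \psi_1) \RelDiracInv
\]
extends to a compact operator \(\SobolevH^1_{\RelDiracInv, s}(M, S) \to \Lp^2(M, S)\). Because the involution \(\RelDiracInv\) is odd with respect to the \(\Z/2\)-grading, this difference sends the \(S^+\)-part of the Sobolev domain into \(\Lp^2(M, S^-)\). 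Hence \(\Callias_{\psi_2, s}^+\) differs from \(\Callias_{\psi_1, s}^+\) by a compact operator between the same pair of Banach spaces.

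Now \cref{thm:selfadjoint-and-fredholm} ensures that both \(\Callias_{\psi_1, s}^+\) and \(\Callias_{\psi_2, s}^+\) are Fredholm between these spaces. By the standard stability of the Fredholm index under compact perturbations, it follows that
\[
  \ind(\Callias_{\psi_1, s}) = \ind(\Callias_{\psi_1, s}^+) = \ind(\Callias_{\psi_2, s}^+) = \ind(\Callias_{\psi_2, s}),
\]
which is the desired conclusion.

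There is essentially no obstacle: the only minor point to double-check is that the two domains \(\SobolevH^1_{\RelDiracInv, s}(M, S)\) for \(\psi_1\) and \(\psi_2\) coincide as topological vector spaces, but this is precisely the content of \cref{domain-does-not-change-lemma}, which identifies this domain intrinsically in terms of \(\Dirac\) and the chirality boundary condition, independently of the admissible potential. An alternative but equivalent route would be to interpolate linearly, setting \(\psi_t = (1-t)\psi_1 + t\psi_2\); each \(\psi_t\) is again admissible (it vanishes on \(K\) and equals the common constant of \(\psi_1, \psi_2\) outside a suitable compact set), the family \(\Callias_{\psi_t, s}\) is norm-continuous in \(t\) as operators from the fixed domain to \(\Lp^2(M,S)\), and homotopy invariance of the index then yields the same conclusion.
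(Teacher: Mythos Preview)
Your proof is correct and follows the same route as the paper, which simply records the lemma as an immediate consequence of \cref{L:Lipschitz->smooth}; you have just spelled out the compact-perturbation-of-index argument and the grading considerations in more detail than the paper does. The only remark is that \cref{thm:selfadjoint-and-fredholm} literally asserts Fredholmness of the full self-adjoint operator \(\Callias_{\psi,s}\), so one should note (as is standard) that oddness then yields Fredholmness of \(\Callias_{\psi,s}^+\) as a map \(\SobolevH^1_{\RelDiracInv,s}(M,S^+)\to\Lp^2(M,S^-)\).
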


The next theorem provides the main tool to compute \(\ind(\Callias_{\psi, s})\).

\begin{thm}\label{thm:splitting-theorem}
  Let \(M\) be a complete Riemannian manifold with \(\partial M = \emptyset\) and \(S \to M\) be a relative Dirac bundle of support \(K \subseteq M\).
  Let \(\psi \colon M \to \mathbb{R}\) be an admissible potential.
  Let \(N \subset M \setminus K\) be a compact hypersurface with trivial normal bundle.
  Then we let \(M'\) be the manifold obtained from cutting \(M\) open along \(N\) so that \(\partial M' = N_0 \sqcup N_1\), where \(N_0\) and \(N_1\) are two disjoint copies of \(N\).
  Pulling back all data via the quotient map \(M' \to M\) induces a relative Dirac bundle \(S' \to M'\) and an admissible potential \(\psi' \colon M' \to \R\), respectively.
  Let \(s \colon \partial M' \to \{\pm 1\}\) be any choice of signs such that \(s|_{N_0} = s|_{N_1} \colon N \to \{\pm 1\}\).
  Then
  \begin{equation} \label{eq:decomposition-index-formula}
      \ind(\Callias'_{\psi', s}) = \ind(\Callias_{\psi}),
  \end{equation}
  where \(\Callias_\psi = \Dirac + \psi \RelDiracInv\) denotes the Callias operator on \(M\) and \(\Callias'_{\psi'}\) the corresponding operator on \(M'\).
\end{thm}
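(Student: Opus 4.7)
The approach is to view both $\Callias_\psi$ on $M$ and $\Callias'_{\psi',s}$ on $M'$ as self-adjoint Fredholm extensions of a common symmetric operator on sections compactly supported away from $N$, and then connect them through a continuous path of such extensions. The key enabling geometric fact is that $N$ lies in the open set $M\setminus K$, so in a neighborhood of $N$ the bundle is fully equipped with the involution $\RelDiracInv$. I would first choose a tubular neighborhood $U\cong(-\delta,\delta)\times N \subset M\setminus K$, which exists since $K$ is compact and $N$ has trivial normal bundle. On $U$ the involution $\RelDiracInv$ is parallel, $\Dirac = \clm(\nu_N^\flat)\bigl(\partial_t + \BdDirac_t\bigr)$, and by \cref{L:Lipschitz->smooth} combined with \cref{lem:potential-agrees-outside-compact} I may modify $\psi$ in a compact neighborhood of $N$ so that it is as regular as needed on $U$, without affecting the index.

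The central algebraic ingredient is the interaction of the sign compatibility $s|_{N_0}=s|_{N_1}$ with the fact that, under the quotient $M'\to M$, the inward unit normals at $N_0$ and $N_1$ are identified with opposite normals $\pm\nu_N$ on $N$. Together these force $\chi_0 = -\chi_1$, and \cref{lem:Lagrangian} then yields that $L(\chi_0)$ and $L(\chi_1)$ are mutually symplectic-complementary Lagrangians in $S|_N$, with $L(\chi_1)=\clm(\nu_N^\flat)L(\chi_0) = L(\chi_0)^\perp$. This is precisely the compatibility needed for the chiral conditions on the two cut sides to reassemble into the continuity condition across $N$ that elements of the domain of $\Callias_\psi$ satisfy automatically (since $\partial M=\emptyset$).

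With this in hand, I would identify $\Lp^2(M,S) \cong \Lp^2(M',S')$ canonically (since $N$ is a null set) and construct a norm-resolvent continuous path of self-adjoint Fredholm extensions on this shared Hilbert space interpolating between the natural domain of $\Callias_\psi$ (encoding continuity across $N$) and $\SobolevH^1_{\RelDiracInv,s}(M',S')$ (encoding the chiral conditions at $N_0\sqcup N_1$). A natural such path rotates through a family of Lagrangians in $S|_N\oplus S|_N$ connecting the diagonal $\{(v,v)\}$ to $L(\chi_0)\oplus L(\chi_1)$, supported in the collar $U$. Coercivity of $\Callias_\psi^2$ at infinity (as in the proof of \cref{thm:selfadjoint-and-fredholm}) is unaffected by an interior deformation of this kind, and each intermediate transmission condition is elliptic and self-adjoint, so the operators remain Fredholm along the path. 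The index is therefore constant by homotopy invariance, yielding \eqref{eq:decomposition-index-formula}.

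The main obstacle will be the rigorous construction of the interpolating family and the uniform Fredholm control along it, since transmission conditions on an interior hypersurface fall slightly outside the direct scope of the Baer--Ballmann framework. I would circumvent this by carrying out the deformation entirely on the cut manifold $M'$, using a one-parameter family of local boundary conditions on $\partial M' = N_0 \sqcup N_1$ whose endpoints are $(\chi_0,\chi_1)$ on one side and a ``diagonal'' identification gluing $N_0$ with $N_1$ on the other. If this proves cumbersome, an alternative is to replace $\psi$ by a compactly supported perturbation featuring an arbitrarily large bump near $N$: by the identity $\Callias_\psi^2 = \Dirac^2 + \clm(\D\psi)\RelDiracInv + \psi^2$, solutions of $\Callias u=0$ decay exponentially across $U$, and a limiting argument decouples the two sides to produce the operator on $M'$ with the prescribed chiral conditions.
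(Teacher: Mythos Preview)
Your key algebraic observation---that the sign compatibility $s|_{N_0}=s|_{N_1}$ together with the opposite inward normals forces $\chi_0=-\chi_1$, so that $L(\chi_1)=L(\chi_0)^\perp$---is exactly what the paper uses. From there, however, the paper takes a much shorter route: after reducing to a smooth potential via \cref{L:Lipschitz->smooth,lem:potential-agrees-outside-compact}, it simply invokes the \emph{splitting theorem} of B\"ar--Ballmann \cite[Theorem~6.5]{Baer-Ballmann:Guide-Boundary-value-problems-Dirac}, \cite[Theorem~8.17]{Baer-Ballmann:Boundary-value-problems-first-order}. That theorem is precisely designed for this situation: given a closed hypersurface $N$ and complementary elliptic boundary conditions $B_0$, $B_1$ on the two sides of the cut with $(B_0)^\perp=B_1$, it identifies the index on the uncut manifold with the index on the cut manifold. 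Your complementarity $L(\chi_1)=L(\chi_0)^\perp$ is exactly the hypothesis of that black box.

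What you propose instead---building by hand a continuous family of self-adjoint Fredholm extensions interpolating between the transmission condition (the diagonal Lagrangian) and the chiral pair $L(\chi_0)\oplus L(\chi_1)$---is essentially a from-scratch proof of the relevant special case of the B\"ar--Ballmann splitting theorem. This is feasible in principle, and your alternative ``large bump'' decoupling argument is also a known heuristic, but both require nontrivial work that you yourself flag as the main obstacle (uniform Fredholm control along the path, ellipticity of intermediate transmission conditions). The paper's approach buys you all of this for free by citing the existing result; your approach would be more self-contained but substantially longer, and as written it is a plausible outline rather than a complete argument.
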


\begin{proof}
    Using \cref{L:Lipschitz->smooth,lem:potential-agrees-outside-compact}, it is again enough to prove the claim in the case when $\psi$ is smooth.
    We will prove this case as a consequence of the general splitting theorem due to B\"ar and Ballmann; see \cite[Theorem~6.5]{Baer-Ballmann:Guide-Boundary-value-problems-Dirac} and \cite[Theorem~8.17]{Baer-Ballmann:Boundary-value-problems-first-order}.
    
    First we note that, by the proof of \cref{thm:selfadjoint-and-fredholm} and the remarks preceding it, the boundary condition \(B_\chi^+ \subseteq \SobolevH^{1/2}(\partial M', S'^+)\) defined by the chirality \(\chi = s \clm(\nu^\flat) \RelDiracInv\) restricted to \(S^+\) is elliptic and its adjoint is the corresponding boundary condition \(B_\chi^- \subseteq \SobolevH^{1/2}(\partial M', S'^-)\).
  Thus we can also apply the theory of \citeauthor{Baer-Ballmann:Guide-Boundary-value-problems-Dirac} separately to \(\Callias_\psi^\pm\) and \(\Callias'^\pm_{\psi', s}\).
    In this light, the theorem is a direct consequence of the general splitting theorem~\cite[Theorem~6.5]{Baer-Ballmann:Guide-Boundary-value-problems-Dirac}, \cite[Theorem~8.17]{Baer-Ballmann:Boundary-value-problems-first-order}.
    To see this, observe that \(B_\chi^+ = B_0^+ \oplus B_1^+\) with respect to the decomposition \(\SobolevH^{1/2}(\partial M', S'^+) = \SobolevH^{1/2}(N, S^+) \oplus \SobolevH^{1/2}(N, S^+)\) coming from \(\partial M = N_0 \sqcup N_1\), where
  \[
    B_i^+ = \{ u \in \SobolevH^{1/2}(N, S^+) \mid s \clm(\nu^\flat|_{N_i}) \RelDiracInv u = u \}, \qquad i = 0,1.
  \]
  By construction, the interior normal field of \(\partial M'\) along \(N_0\) is equal to the exterior normal field along \(N_1\) and hence \(\nu|_{N_0} = - \nu|_{N_1}\).
  Thus \((B_0^+)^\perp = B_1^+\) viewed as an \(\Lp^2\)-orthogonal complement in \(\SobolevH^{1/2}(N, S^+)\).
  Consequently, the hypotheses of \cite[Theorem~6.5]{Baer-Ballmann:Guide-Boundary-value-problems-Dirac} are satisfied and we obtain \(\ind(\Callias_{\psi', s}^+) = \ind(\Callias_\psi^+)\).
  This concludes the proof because, by definition, \(\ind(\Callias'_{\psi', s}) = \ind(\Callias'^+_{\psi', s})\) and \(\ind(\Callias_{\psi}) = \ind(\Callias_{\psi}^+)\).
\end{proof}

\begin{lem} \label{lem:uniformly-coercive-potential-vanishing-lemma}
  Let \(M\) be a complete Riemannian manifold with compact boundary \(\partial M\) and \(S \to M\) be a relative Dirac bundle.
  Let \(\psi \colon M \to \mathbb{R}\) be an admissible potential and \(s \colon \partial M \to \{\pm 1\}\) a choice of signs such that 
  \begin{myenumi}
    \item there exists \(C > 0\) such that \(\psi^2 - |\D \psi| \geq C\) on all of \(M\),
    \item \(s \psi \geq 0\) along \(\partial M\).
  \end{myenumi}
  Then \(\Callias_{\psi, s}\) is invertible.
  In particular, \(\ind(\Callias_{\psi, s}) = 0\).
\end{lem}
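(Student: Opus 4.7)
The strategy is to establish a coercivity estimate $\|\Callias_{\psi,s} u\|^2 \geq C \|u\|^2$ for $u$ in the domain and then combine it with self-adjointness from \cref{domain-does-not-change-lemma,thm:selfadjoint-and-fredholm} to conclude invertibility. The vanishing of the index is then automatic from the $\Z/2$-grading.

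I would start with smooth $u \in \Css^\infty(M,S)$, which is a core for $\Callias_{\psi,s}$ by the essential self-adjointness discussed before \cref{domain-does-not-change-lemma}. Expanding $\|\Callias_\psi u\|^2 = \|\Dirac u\|^2 + 2\Re(\Dirac u, \psi\RelDiracInv u) + \|\psi u\|^2$, the cross term can be rewritten using Green's formula~\labelcref{eq:Dirac-Green} together with the derivation rule $\Dirac(\psi\RelDiracInv v) = \clm(\D\psi)\RelDiracInv v - \psi\RelDiracInv \Dirac v$; here the sign arises from the fact that $\RelDiracInv$ anti-commutes with $\Dirac$ on $M\setminus K$, while $\psi$ vanishes on $K$ so the identity holds weakly on all of $M$. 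A short computation (using also that $\psi\RelDiracInv$ is self-adjoint) yields the key identity
\begin{equation*}
  \|\Callias_\psi u\|^2 = \|\Dirac u\|^2 + \|\psi u\|^2 + (u, \clm(\D\psi)\RelDiracInv u) + \int_{\partial M} \langle u, \psi\, \clm(\nu^\flat)\RelDiracInv u\rangle\, \vol_{\partial M}.
\end{equation*}

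Next, I would bound the two lower-order terms separately. The endomorphism $\clm(\D\psi)\RelDiracInv$ is fiberwise self-adjoint (being the product of the anti-self-adjoint $\clm(\D\psi)$ and the self-adjoint $\RelDiracInv$, which anticommute) and has eigenvalues in $\{\pm |\D\psi|\}$, so pointwise $\psi^2|u|^2 + \langle u, \clm(\D\psi)\RelDiracInv u\rangle \geq (\psi^2 - |\D\psi|)|u|^2 \geq C|u|^2$ by hypothesis~(i). For the boundary term, the condition $\chi u = u$ reads $\clm(\nu^\flat)\RelDiracInv u|_{\partial M} = s\, u|_{\partial M}$, so the integrand becomes $s\psi |u|^2$, which is pointwise non-negative by hypothesis~(ii). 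Combining these gives $\|\Callias_\psi u\|^2 \geq C \|u\|^2$ on $\Css^\infty(M,S)$, and hence on all of $\dom(\Callias_{\psi,s})$ by density.

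Since $\Callias_{\psi,s}$ is self-adjoint and bounded below by $\sqrt{C}$ in graph norm, its kernel is trivial and its range is closed, so $\mathrm{ran}(\Callias_{\psi,s}) = \ker(\Callias_{\psi,s})^\perp = \Lp^2(M,S)$; thus $\Callias_{\psi,s}$ is invertible. The oddness of $\Callias_\psi$ with respect to the $\Z/2$-grading then forces $\ker \Callias_{\psi,s}^\pm \subseteq \ker \Callias_{\psi,s} = 0$, hence $\ind(\Callias_{\psi,s}) = 0$. The main delicate point I expect is the sign bookkeeping in the integration by parts: the boundary integrand must emerge exactly as $s\psi |u|^2$ (rather than $-s\psi |u|^2$) so that hypothesis~(ii) directly provides non-negativity; this pins down the chirality convention in~\labelcref{eq:boundary-involution} as precisely what is needed for the argument to close.
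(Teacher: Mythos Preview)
Your proposal is correct and follows essentially the same approach as the paper: both expand \(\|\Callias_\psi u\|^2\) via Green's formula to obtain the identity with the boundary term \(\int_{\partial M}\langle u,\psi\,\clm(\nu^\flat)\RelDiracInv u\rangle\), use the boundary condition \(\chi u=u\) to rewrite it as \(s\psi|u|^2\geq 0\), apply \(\psi^2-|\D\psi|\geq C\) in the interior, and conclude invertibility from the resulting lower bound together with self-adjointness. The paper's proof is just a slightly more condensed version of exactly this computation.
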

\begin{proof}
  By Green's formula, for any \(u \in \Css^\infty(M, S)\), we have
  \begin{align*}
    \int_M |\Callias_\psi u|^2\ \vol_M &= \int_{M} \left(|\Dirac u|^2 + \langle u, \clm(\D \psi) \RelDiracInv u + \psi^2 u \rangle\ \right) \vol_M 
    + \int_{\partial M} \langle u, \psi \underbrace{\clm(\nu^\flat) \RelDiracInv u}_{= s u} \rangle\ \vol_{\partial M} \\
    &\geq \int_M (\psi^2 - |\D \psi|) |u|^2\ \vol_M + \int_{\partial M} \underbrace{s \psi}_{\geq 0} |u|^2\ \vol_{\partial M} \geq C \int_{M} |u|^2\ \vol_M.
  \end{align*}
  By continuity, the final estimate holds for all \(u \in \SobolevH^1_{\RelDiracInv, s}(M, S)\).
  Therefore, \(\Callias_{\psi,s}\) is invertible (compare~\cite[Corollary~5.9]{Baer-Ballmann:Guide-Boundary-value-problems-Dirac}).
\end{proof}

\begin{lem} \label{lem:Callias-index-on-compact-mfd}
  Let \(M\) be a compact Riemannian manifold with boundary \(\partial M\) and \(S \to M\) be a relative Dirac bundle with empty support \parensup{that is, the involution \(\RelDiracInv\) is defined on all of \(M\)}.
  Let \(s \colon \partial M \to \{\pm 1\}\) be a choice of signs.
  Then any Lipschitz function \(\psi \colon M \to \R\) is an admissible potential and \(\ind(\Callias_{\psi, s})\) does not depend on the choice of the potential \(\psi\).
  Furthermore, if the sign \(s \in \{\pm 1\}\) is constant on all of \(\partial M\), then \(\ind(\Callias_{\psi, s}) = 0\) for any potential \(\psi\).
\end{lem}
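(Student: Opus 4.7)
The plan is to apply three of the earlier results of this section in sequence, with essentially no new work required.

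First, I would verify that admissibility of any Lipschitz function is automatic in this setting. Since the support $K$ of the relative Dirac bundle is empty, the condition $\psi|_K = 0$ from the definition of admissibility is vacuous. For the remaining requirement one may simply take $L = M$, which is compact by hypothesis; then the condition on the components of $M \setminus L = \emptyset$ is again vacuous. Hence every Lipschitz function $\psi \colon M \to \R$ qualifies as an admissible potential.

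Second, for the independence of the index on the choice of $\psi$, let $\psi_0, \psi_1 \colon M \to \R$ be any two Lipschitz functions. Since $M$ is compact, $\psi_0$ and $\psi_1$ trivially coincide outside the compact set $L = M$, so \cref{L:Lipschitz->smooth} applies and shows that $\Callias_{\psi_1, s} - \Callias_{\psi_0, s}$ is a compact operator $\SobolevH^1_{\RelDiracInv, s}(M, S) \to \Lp^2(M, S)$. Combined with the Fredholmness of both $\Callias_{\psi_j, s}$ provided by \cref{thm:selfadjoint-and-fredholm}, stability of the Fredholm index under compact perturbations yields $\ind(\Callias_{\psi_0, s}) = \ind(\Callias_{\psi_1, s})$.

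Third, to handle the case in which $s \equiv \varepsilon \in \{\pm 1\}$ is a single constant on all of $\partial M$, by the previous step it suffices to exhibit one specific potential whose Callias operator has vanishing index. I would take the constant function $\psi \equiv \varepsilon$; then $\D \psi = 0$, so $\psi^2 - |\D \psi| = 1 > 0$ everywhere, while $s \psi \equiv 1 \geq 0$ on $\partial M$. Both hypotheses of \cref{lem:uniformly-coercive-potential-vanishing-lemma} are thus satisfied, so $\Callias_{\psi, s}$ is invertible and in particular $\ind(\Callias_{\psi, s}) = 0$.

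No step poses a genuine obstacle; the argument is just a direct combination of the preceding machinery. The only minor subtlety is noticing that on a compact manifold the hypothesis of \cref{L:Lipschitz->smooth} is automatic (any two Lipschitz potentials differ by a compactly supported Lipschitz function), which reduces the general case to the constant potential, where the spectral estimate from \cref{lem:uniformly-coercive-potential-vanishing-lemma} directly gives invertibility.
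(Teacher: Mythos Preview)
Your proof is correct and essentially identical to the paper's own argument: the paper likewise observes that admissibility is vacuous, invokes \cref{lem:potential-agrees-outside-compact} (itself a direct corollary of \cref{L:Lipschitz->smooth}) for independence of the index, and then applies \cref{lem:uniformly-coercive-potential-vanishing-lemma} to the constant potential \(\psi = s\) to conclude vanishing when the sign is constant.
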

\begin{proof}
  Since \(M\) is compact and \(\RelDiracInv\) is defined on of all \(M\), the condition of being an \emph{admissible} potential is vacuous.
  Moreover, by \cref{lem:potential-agrees-outside-compact}, any two potentials yield the same index.
  Finally, if the choice of signs \(s\) is constant, then \cref{lem:uniformly-coercive-potential-vanishing-lemma} implies vanishing of the index for the constant potential \(\psi = s\).
  Since the index does not depend on the potential, it must vanish for any choice of potential.
\end{proof}

Let us now specialize to the case of \cref{ex:Gromov-Lawson-Relative}.
Let $M$ be a compact even-dimensional Riemannian manifold with boundary.
Let $S$ be the Dirac bundle associated to a GL pair $(E,F)$ over $M$.
For an admissible potential $\psi$, consider the Callias operator $\Callias_\psi$.
With the choice of sign $s=1$, let us consider the index of $\Callias_{\psi, 1}$.
In order to compute \(\ind(\Callias_{\psi,1})\), we make use of the following construction.
  Form the double $\double M \coloneqq M\cup_{\partial M} M^-$ of $M$, where $M^-$ denotes the manifold $M$ with opposite orientation.
  Observe that $\double M$ is a closed manifold equipped with a spin structure induced by the spin structure of $M$.
  Using Condition~\eqref{GromovLawsonAssumption}, let $V(E,F)\to \double M$ be a bundle on $\double M$ which outside a small collar neighborhood coincides with $E$ over $M$ and with $F$ over $M^-$ defined using the bundle isomorphism implicit in a GL pair.
  The \emph{relative index} of $(E,F)$ is the index of the spin Dirac operator on $\double M$ twisted with the bundle $V(E,F)$, that is,
  \[
  \relind(M;E,F)\coloneqq\ind\left(\ReducedSpinDirac_{\double M,V(E,F)}\right)\in\Z.
  \]
  The computation of \(\ind(\Callias_{\psi,1})\) is given by the next proposition.

\begin{cor}\label{relative-index-computation}
Consider the setup of \cref{ex:Gromov-Lawson-Relative}.
Then for any choice of potential \(\psi\), we have
\[
    \ind(\Callias_{\psi,1})=\relind(M;E,F),
\]
where the latter expression is described in the paragraph preceding this corollary.
\end{cor}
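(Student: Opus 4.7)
The plan is to invoke the splitting theorem \cref{thm:splitting-theorem} on the double, use the vanishing lemma \cref{lem:Callias-index-on-compact-mfd} to kill the mirror contribution, and then identify the remaining closed-manifold index with $\relind(M;E,F)$.

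First, I would use \cref{lem:potential-agrees-outside-compact} to reduce to any convenient admissible potential, exploiting the fact that $\ind(\Callias_{\psi,1})$ is independent of this choice. Next I would form the closed double $\double M = M\cup_{\partial M} M^-$, equip it with a smooth Riemannian metric extending $g$, and extend the relative Dirac bundle: since $\mathfrak t$ glues $E$ on $M$ to $F$ on $M^-$ across a collar of $\partial M$ (and similarly $F$ to $E$), the bundle $S$ extends to a relative Dirac bundle $\tilde S\to \double M$ whose support is still $K\subset \interior M$. Likewise $\psi$ extends to an admissible potential $\tilde\psi$ on $\double M$.

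The hypersurface $\partial M\subset \double M\setminus K$ is compact and has trivial normal bundle, so applying \cref{thm:splitting-theorem} with the constant sign $s=1$ on both copies of $\partial M$ gives
\[
\ind(\Callias^{\double M}_{\tilde\psi}) \;=\; \ind(\Callias^{M}_{\psi,1}) + \ind(\Callias^{M^-}_{\tilde\psi|_{M^-},1}).
\]
Because $K\cap M^- = \emptyset$, the involution on $\tilde S|_{M^-}$ is defined everywhere, so the induced relative Dirac bundle on $M^-$ has empty support. With $s=1$ constant on $\partial M^-$, \cref{lem:Callias-index-on-compact-mfd} then forces the second summand to vanish, and I conclude
\[
\ind(\Callias^{M}_{\psi,1}) \;=\; \ind(\Callias^{\double M}_{\tilde\psi}).
\]

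The final task is to match the right-hand side with $\relind(M;E,F)=\ind(\ReducedSpinDirac^{\double M}_{V(E,F)})$, and this is the main obstacle of the proof. Since $\double M$ is closed, \cref{lem:potential-agrees-outside-compact} shows that $\ind(\Callias^{\double M}_{\tilde\psi})$ does not depend on the choice of $\tilde\psi$, so it equals the index of the ordinary Dirac operator on $\tilde S$. To finish, I would exhibit an explicit correspondence between sections of $\tilde S$ on $\double M$ and pairs of bundle sections on $M$ satisfying the boundary condition $\chi u = u$ with $s=1$, under which the Dirac operator of $\tilde S$ intertwines with the twisted spin Dirac operator $\ReducedSpinDirac_{V(E,F)}$ on $\double M$. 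Carrying this out requires keeping careful track of the graded tensor product structure of $S$, the identification $\slashed S^\pm_{\double M}|_{M^-}\cong \slashed S^\mp_M$ arising from Clifford multiplication by the normal along $\partial M$, and the way $\mathfrak t$ appears as the transition function that assembles $E$ on $M$ and $F$ on $M^-$ into the single bundle $V(E,F)$ on $\double M$.
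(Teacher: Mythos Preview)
Your overall architecture---double, apply \cref{thm:splitting-theorem}, kill the mirror half with \cref{lem:Callias-index-on-compact-mfd}---is exactly the paper's. But the specific extension you describe does not have the property you claim. If you glue $E$ on $M$ to $F$ on $M^-$ and, symmetrically, $F$ on $M$ to $E$ on $M^-$, then after cutting, the GL pair you see on $M^-$ is $(F,E)$. The only parallel identification between these two bundles is $\mathfrak t^{-1}$, and that is still defined only outside the copy of $K$ sitting inside $M^-$. Hence the induced relative Dirac bundle on $M^-$ has nonempty support (namely that copy of $K$), and \cref{lem:Callias-index-on-compact-mfd} does not apply to force the $M^-$ contribution to vanish. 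Your sentence ``Because $K\cap M^- = \emptyset$, the involution on $\tilde S|_{M^-}$ is defined everywhere'' conflates the original $K\subset M$ with the second copy of $K$ created by your symmetric gluing.

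The paper fixes this by choosing an \emph{asymmetric} extension: take $E' = V(E,F)$ as before, but extend $F$ to a bundle $F'$ on $\double M$ with $F'|_{M^-}=F$ (so $F'$ is ``$F$ on both halves''). After cutting, the pair on $M^-$ is $(F,F)$, for which the identity is a global parallel isomorphism; now the support really is empty and \cref{lem:Callias-index-on-compact-mfd} gives $\ind(\Callias_{0,1}^{F,F})=0$. On the closed double the Dirac operator on $\slashed S_{\double M}\hat\otimes(E'\oplus (F')^{\op})$ has index $\ind(\slashed{\mathcal D}_{E'})-\ind(\slashed{\mathcal D}_{F'})$; the second term vanishes because the orientation-reversing flip of $\double M$ preserves $F'$, so the first term is exactly $\relind(M;E,F)$. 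This also obviates the section-matching you outline in your last paragraph: no explicit intertwiner is needed, only the observation that the untwisted $F'$-index is zero.
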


\begin{proof}
Let \(E^\prime = V(E,F)\).
Moreover, extend the bundle $F$ to a bundle with metric connection $F^\prime$ on $\double M$ such that $F^\prime|_{M^-}=F$.
Consider the $\Z/2$-graded Dirac bundle \(W^\prime \coloneqq \slashed S_{\double M} \tensgr (E^\prime \oplus (F^\prime)^{\op})\) with associated Dirac operator $\Dirac^\prime$.
Observe that
\begin{equation}\label{GLIndex}
    \ind\Dirac^\prime=\relind(M;E,F)
\end{equation}
because the index of the Dirac operator on \(\slashed S_{\double M} \otimes F^\prime\) vanishes.
Observe also that $\Dirac^\prime=\Callias_0^{E^\prime,F^\prime}$.
Cut $\double M$ open along $\partial M$ as in \cref{thm:splitting-theorem}.
By pulling back all data, we obtain the operators $\Callias_{0,1}^{E,F}$ on $M$ and $\Callias_{0,1}^{F,F}$ on $M^-$.
By~\cref{lem:Callias-index-on-compact-mfd}, \(\ind \Callias_{0,1}^{F,F}=0\).
By~\cref{lem:potential-agrees-outside-compact}, \(\ind (\Callias_{\psi,1}^{E,F})=\ind(\Callias_{0, 1}^{E,F})\).
Therefore, the thesis follows using Identity~\eqref{GLIndex} and~\cref{thm:splitting-theorem}.
\end{proof}    

We now deal with \cref{ex:SimpleBand} in a similar fashion.
\begin{cor}
\label{ex:band-index-conputation}
  Consider the setup of \cref{ex:SimpleBand} and choose the signs such that \(s|_{\partial_\pm V} = \pm 1\).
  Then for any choice of potential \(\psi\), we have
  \[
    \ind(\Callias_{\psi, s}) = \ind(\ReducedSpinDirac_{\partial_- V, E|_{\partial_- V}}) = \int_{\partial_- V} \AhatClass(\partial_- V) \wedge \ch(E)|_{\partial_- V},  
  \]
  where \(\ReducedSpinDirac_{\partial_- V, E|_{\partial_- V}}\) denotes the corresponding twisted spinor Dirac operator on \(\partial_- V\), \(\AhatClass(\partial_- V)\) is the \(\Ahat\)-form of \(\partial_- V\), \(\ch(E)\) the Chern character form associated to \(E\).
  \end{cor}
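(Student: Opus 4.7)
The proof follows the same blueprint as \cref{relative-index-computation}: invariance of the index under change of potential, splitting via \cref{thm:splitting-theorem}, and direct computation on a model. Since \(V\) is compact and the involution \(\RelDiracInv\) of \cref{ex:SimpleBand} is globally defined, every Lipschitz function on \(V\) is admissible, and \cref{lem:potential-agrees-outside-compact} tells us that \(\ind(\Callias_{\psi,s})\) is independent of \(\psi\). I take \(\psi \equiv 0\), reducing to computing the index of \(\ReducedSpinDirac_E\) acting on \(\Ct^\infty(V, \slashed S_V \tensor E)\) with local chirality boundary conditions \(\chi^+ u|_{\partial V} = u|_{\partial V}\), where \(\chi^+ = \iu s\, \clm(\nu^\flat)\) and \(s|_{\partial_\pm V} = \pm 1\); these are the APS/Freed-type local chirality boundary conditions selecting a fixed eigenspace of \(\clm(\nu^\flat)\) on each boundary component.

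To isolate this index via the splitting theorem, the plan is to attach a cylindrical collar \(C \coloneqq (\partial_+ V) \times [0,1]\) with product metric and pullback bundle to \(\partial_+ V\), forming the extended band \(V'\) with \(\partial V' = \partial_- V \sqcup (\partial_+ V \times \{1\})\), and then to double \(V'\) to form the closed odd-dimensional manifold \(W \coloneqq \double V'\). The essential point of the collar is that \(\partial_+ V\) now appears in \(W\) as two distinct internal hypersurfaces (one in \(V'\), one in \(V'^-\)), so the splitting theorem allows independent sign choices on them; without the collar, \(\partial_+ V\) would be a single internal hypersurface with a single sign, forcing the \(V\)- and \(V^-\)-pieces to carry identical sign conventions and yielding only tautological identities of the form \(0 = \ind(\Callias^V) - \ind(\Callias^V)\). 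Applying \cref{thm:splitting-theorem} to \(W\) cut along all four internal hypersurfaces (the two components of \(\partial V'\) and the two copies of \(\partial_+ V\)), I would choose the cut signs so that the \(V\)-piece inherits its original sign \((-1,+1)\) while the \(V^-\)-piece and the \(C^-\)-piece each carry a constant sign. By \cref{lem:Callias-index-on-compact-mfd} the latter two contributions vanish, and since \(\ind(\Callias^W) = 0\) on the closed odd-dimensional \(W\) (deform the potential to zero and use self-adjointness of \(\ReducedSpinDirac_{E_W}\)), this yields
\[
  \ind(\Callias^V_{\psi,s}) = -\ind(\Callias^C_{\psi, s_C}),
\]
where \(s_C\) is the induced sign on \(C\): \(+1\) on one end and \(-1\) on the other.

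On the cylinder the computation is direct. Setting \(N \coloneqq \partial_+ V\) and identifying \(\slashed S_V|_{N \times \{t\}} \cong \slashed S_N\) via the parallel involution \(\clm(dx)\), one writes \(\ReducedSpinDirac_E = \clm(dx)(\partial_x + \widetilde{\ReducedSpinDirac}_{N, E|_N})\) and solves \(\ReducedSpinDirac_E u = 0\) by separation of variables against the spectral decomposition of \(\widetilde{\ReducedSpinDirac}_{N,E|_N}\). The local chirality conditions (prescribing that \(u|_{\partial C}\) lies in a fixed chirality subbundle of \(\slashed S_N\) at both ends) annihilate every nonzero eigenmode and retain only the zero-modes of \(\widetilde{\ReducedSpinDirac}_{N,E|_N}\), giving \(\ind(\Callias^C_{\psi, s_C}) = -\ind(\ReducedSpinDirac_{N,E|_N}^+)\). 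Combining with the display above yields \(\ind(\Callias^V_{\psi,s}) = \ind(\ReducedSpinDirac_{\partial_+ V, E|_{\partial_+ V}})\); spin-cobordism invariance of the \(\Ahat\)-genus over the bordism \(V\) equates this with \(\ind(\ReducedSpinDirac_{\partial_- V, E|_{\partial_- V}})\), and the integral formula is then Atiyah--Singer on the closed even-dimensional spin manifold \(\partial_- V\).

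The main obstacle I expect is the careful sign bookkeeping in the splitting step: the four cut signs must be chosen so that \cref{lem:Callias-index-on-compact-mfd} annihilates exactly the two unwanted pieces (without also killing the \(V\)-piece) and so that the matching condition of \cref{thm:splitting-theorem} is satisfied on each cut. The cylinder separation-of-variables computation is elementary but also error-prone, because of the interplay between the parallel involution \(\clm(dx)\), the induced grading on \(\slashed S_N\), the factors of \(\iu\) in \(\chi^+\), and the local chirality boundary conditions on both ends.
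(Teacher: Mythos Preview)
Your proposal is essentially correct but follows a genuinely different route from the paper. The paper attaches \emph{infinite} half-cylinders \(V_\pm = \partial_\pm V \times [\pm 1, \pm\infty)\) to \(V\), forms the complete boundaryless manifold \(V_\infty\), chooses a potential equal to \(\pm 1\) on \(V_\pm\), and then applies \cref{thm:splitting-theorem} to the cut along \(\partial V\); the contributions from the two half-cylinders vanish by \cref{lem:uniformly-coercive-potential-vanishing-lemma}, so \(\ind(\Callias_{V,\psi,s}) = \ind(\Callias_{V_\infty,\psi_\infty})\), and the latter is identified with \(\ind(\ReducedSpinDirac_{\partial_- V, E|_{\partial_-V}})\) by citing the classical Callias index theorem of Anghel. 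By contrast, you stay entirely in the compact world: you cap off with a finite collar, double, split into four compact pieces, kill two of them via \cref{lem:Callias-index-on-compact-mfd}, and then compute the remaining cylinder index by hand via separation of variables. Your approach is more self-contained---it avoids the analysis on complete non-compact manifolds and the external reference---at the price of the sign bookkeeping and the explicit cylinder computation you flag. The paper's approach is shorter because it outsources exactly that computation to Anghel's result.

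Two comments on the details. First, the collar trick is genuinely necessary for your argument, for the reason you state: doubling \(V\) directly forces \(V^-\) to carry the same mixed signs \((-1,+1)\) as \(V\), and the splitting theorem then yields only the symmetry \(\ind(\Callias^V_{(-1,+1)}) + \ind(\Callias^{V^-}_{(-1,+1)}) = 0\), which is tautological. Second, your cylinder computation goes through: with the sign choice \((+1,-1)\) on \(C\), the chirality \(\chi^+ = i\,\clm_{\slashed S}(\D t)\) is the \emph{same} at both ends, so solving \((\partial_t + \BdDirac_E)u = 0\) and matching the boundary projections kills every nonzero eigenmode of \(\BdDirac_E\) and leaves \(\ker \BdDirac_E^\pm\), giving \(\ind(\Callias^C) = \pm\ind(\ReducedSpinDirac_{N,E})\). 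The residual sign is fixed by the identification of the grading \(\gamma = i\,\clm_{\slashed S}(\D t)\) with the canonical grading on \(\slashed S_N\); this is a convention-dependent but routine check, and your cobordism step then absorbs the passage from \(\partial_+ V\) to \(\partial_- V\).
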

  \begin{proof}
  First of all, the index does not depend on \(\psi\) by \cref{lem:Callias-index-on-compact-mfd} since \(V\) is compact.
  We can thus choose a function \(\psi\) suitable for our purposes.
  Furthermore, let
  \[
    V_\infty = V_- \cup_{\partial_- V} V \cup_{\partial_+ V} V_+,
    \qquad \text{where}\ V_- \coloneqq \partial_- V \times (-\infty, -1],
    \quad V_+ \coloneqq \partial_+ V \times [1, \infty),
  \]
  be the infinite band obtained from attaching infinite cylinders along the boundary parts \(\partial_\pm V\).
  We extend the Riemannian metric on \(V\) to a complete metric on \(V_\infty\).
  Then the same construction as in \cref{ex:SimpleBand} yields a relative Dirac bundle on \(V_\infty\) extending the data on \(V\).
  Now choose a smooth function \(\psi_\infty \colon V_\infty \to [-1,1]\) such that \(\psi_\infty(V_\pm) = \pm 1\).
  We will denote \(\psi \coloneqq \psi_\infty|_{V}\).
  Applying  \cref{thm:splitting-theorem} to the splitting of \(V_\infty\) along \(\partial V = \partial_- V \sqcup \partial_+ V\) implies that
  \[
    \ind(\Callias_{V_\infty, \psi_\infty}) = \ind(\Callias_{V, \psi, s}) + \ind(\Callias_{V_-, -1, -1}) + \ind(\Callias_{V_+, +1, +1})
  \]
  However, \cref{lem:uniformly-coercive-potential-vanishing-lemma} implies that \( \ind(\Callias_{V_\pm, \pm 1, \pm 1}) = 0\), and so \(\ind(\Callias_{V_\infty, \psi_\infty}) = \ind(\Callias_{V, \psi, s})\).
  Finally, it follows from~\cite[Corollary~1.9]{Anghel:Callias} that \(\ind(\Callias_{V_\infty, \psi_\infty}) = \ind(\ReducedSpinDirac_{\partial_- V, E|_{\partial_- V}})\); for a more general context compare also the partitioned manifold index theorem from~\cite[Appendix A]{Zeidler:band-width-estimates}.
  The last equality follows from the Atiyah--Singer index theorem~\cite[Theorem~13.10]{LawsonMichelsohn:SpinGeometry}.
  \end{proof}
\section{Spectral estimates} \label{sec:curvature-estimates}
Our goal here is to establish spectral estimates for the Callias operator \(\Callias_\psi\) from \cref{eq:defi_callias} associated to a relative Dirac bundle \(S \to M\) and an admissible potential \(\psi \colon M \to \R\).
The parts only concerning the Dirac operator are similar to estimates of imaginary eigenvalues in the context of the \enquote{MIT bag boundary conditions} due to \citeauthor{Raulot:Optimal-Eigenvalue-Estimate}~\cite{Raulot:Optimal-Eigenvalue-Estimate}.

We start with a lemma computing the \(\Lp^2\)-norm of the Dirac operator applied to a section in terms of the Bochner--Lichnerowicz--Weitzenböck curvature term and a boundary term.
In the following, we make extensive use of the notation and formulas introduced in \cref{sec:relative Dirac bundles}.

\begin{lem}\label{lem:Dirac-Penrose-Mean-curvature}
  Let \(S \to M\) be a relative Dirac bundle and \(s \colon \partial M \to \{\pm 1\}\) a choice of signs.
  Then for every \(u \in \Ct^\infty_\cpt(M, S)\), the following identity holds.
  \[
    \int_{M} |\Dirac u|^2\ \vol_M = \frac{n}{n-1} \int_M \left(|\Penrose u|^2 + \langle u, \Curv u \rangle \right)\ \vol_M + \int_{\partial M} \langle u, (\tfrac{n}{2} \mean_g - \tfrac{n}{n-1} \BdDirac)u \rangle\ \vol_{\partial M}.
  \]
\end{lem}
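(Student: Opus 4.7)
The plan is to start from $\|\Dirac u\|^2 = \int_M \langle \Dirac u, \Dirac u\rangle\, \vol_M$ and systematically apply the tools collected in \cref{sec:relative Dirac bundles}. The choice of signs $s$ actually plays no role in the identity itself, so I would essentially ignore it in the proof.

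First, I would apply Green's formula~\labelcref{eq:Dirac-Green} with $v = \Dirac u$ to obtain
\[
  \|\Dirac u\|^2 = (u, \Dirac^2 u) + \int_{\partial M} \langle u, \nu^\flat \cdot \Dirac u\rangle\ \vol_{\partial M},
\]
then use the Bochner--Lichnerowicz--Weitzenböck formula~\labelcref{Weitzenbock} to replace $\Dirac^2$ with $\nabla^\ast\nabla + \Curv$. Next, the connection Green's formula~\labelcref{eq:connection-Green} (applied with $\varphi = \nabla u$) converts $(u, \nabla^\ast\nabla u)$ into $\|\nabla u\|^2 + \int_{\partial M}\langle u, \nabla_\nu u\rangle\ \vol_{\partial M}$. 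At this stage the identity reads
\[
  \|\Dirac u\|^2 = \|\nabla u\|^2 + \int_M \langle u, \Curv u\rangle\ \vol_M + \int_{\partial M} \langle u, \nabla_\nu u + \nu^\flat \cdot \Dirac u\rangle\ \vol_{\partial M}.
\]

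The second step is to insert the pointwise Penrose--Dirac identity~\labelcref{eq:Penrose-Dirac}, yielding $\|\nabla u\|^2 = \|\Penrose u\|^2 + \tfrac{1}{n}\|\Dirac u\|^2$. Absorbing the $\tfrac{1}{n}\|\Dirac u\|^2$ contribution into the left-hand side produces the coefficient $\tfrac{n-1}{n}$ in front of $\|\Dirac u\|^2$, and multiplying through by $\tfrac{n}{n-1}$ gives the desired factor $\tfrac{n}{n-1}$ in front of both the Penrose term and the curvature term, as well as in front of the boundary integral.

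Finally, for the boundary term I would use the identity~\labelcref{eq:boundary-dirac-vs-mean} in the rearranged form
\[
  \nabla_\nu u + \clm(\nu^\flat) \Dirac u = \tfrac{n-1}{2}\mean_g\, u - \BdDirac u.
\]
Substituting this into the boundary integral and simplifying the prefactor $\tfrac{n}{n-1}\cdot\tfrac{n-1}{2} = \tfrac{n}{2}$ produces exactly the claimed expression $\langle u, (\tfrac{n}{2}\mean_g - \tfrac{n}{n-1}\BdDirac) u\rangle$. Since the computation is purely formal and all inputs are already in place, no step is really a serious obstacle; the main thing to be careful about is the bookkeeping of signs in the two Green's formulas and verifying that the prefactors in the boundary term collapse correctly after multiplication by $\tfrac{n}{n-1}$.
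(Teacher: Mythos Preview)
Your proposal is correct and follows essentially the same route as the paper: Green's formula for \(\Dirac\), then Weitzenb\"ock and Green's formula for the connection, then the Penrose--Dirac identity \labelcref{eq:Penrose-Dirac} to absorb \(\tfrac{1}{n}\|\Dirac u\|^2\), and finally the boundary identity \labelcref{eq:boundary-dirac-vs-mean}. Your observation that the choice of signs \(s\) plays no role is also accurate; the paper's proof does not use it either.
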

\begin{proof}
  Using the Green- and Weitzenböck formulas, we obtain
  \begin{align*}
    \int_{M} |\Dirac u|^2\ \vol_M &= \int_M \langle  u, \Dirac^2 u \rangle\ \vol_M   + \int_{\partial M} \langle u, \nu^\flat \cdot \Dirac u \rangle\ \vol_{\partial M} \\
    &= \int_{M} |\nabla u|^2\ \vol_M + \int_M \langle u, \Curv u \rangle\ \vol_M  + \int_{\partial M} \langle u, (\underbrace{\clm(\nu^\flat) \Dirac + \nabla_\nu}_{=\frac{n-1}{2}\mean_g - \BdDirac})u \rangle\ \vol_{\partial M}.
  \end{align*}
  The identity now follows from \labelcref{eq:boundary-dirac-vs-mean,eq:Penrose-Dirac}.
\end{proof}

We now combine this with another application of Green's formula to get a corresponding computation for the Callias operator.

\begin{prop} \label{thm:callias-estimate}
  Let \(S \to M\) be a relative Dirac bundle and \(\psi \colon M \to \R\) be an admissible potential.
  Then for every \(u \in \Ct^\infty_\cpt(M, S)\), the following identity holds.
  \begin{align}
    \int_M | \Callias_\psi u |^2 =& \frac{n}{n-1} \int_M \left(|\Penrose u|^2 + \langle u, \Curv u \rangle \right)\ \vol_M + \int_M \langle u, (\psi^2 + \clm(\D \psi) \RelDiracInv) u \rangle\ \vol_M \nonumber
    \\ &+ \int_{\partial M} \langle u, (\tfrac{n}{2} \mean_g - \tfrac{n}{n-1} \BdDirac + \psi \clm(\nu^\flat) \RelDiracInv )u \rangle\ \vol_{\partial M}. \label{eq:general-callias-equality}
  \end{align}
\end{prop}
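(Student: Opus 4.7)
The plan is a direct computation that combines the square formula \eqref{eq:square_of_callias} with Green's formula and then invokes \cref{lem:Dirac-Penrose-Mean-curvature} to rewrite $\int_M|\Dirac u|^2$. There is no real geometric obstacle; the work lies entirely in bookkeeping of boundary terms.

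First, I would apply Green's formula \eqref{eq:Dirac-Green} to $\Callias_\psi = \Dirac + \psi\RelDiracInv$. Since $\psi\RelDiracInv$ is a (fiberwise) self-adjoint bundle endomorphism, only the Dirac part contributes a boundary term, yielding
\[
  \int_M |\Callias_\psi u|^2\ \vol_M \;=\; \int_M \langle u, \Callias_\psi^2 u\rangle\ \vol_M + \int_{\partial M}\langle u, \nu^\flat\cdot \Callias_\psi u\rangle\ \vol_{\partial M}.
\]

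Next, I would expand $\Callias_\psi^2$ via \eqref{eq:square_of_callias} and then integrate by parts the pure Dirac piece:
\[
  \int_M \langle u, \Dirac^2 u\rangle\ \vol_M \;=\; \int_M |\Dirac u|^2\ \vol_M - \int_{\partial M}\langle u, \nu^\flat\cdot \Dirac u\rangle\ \vol_{\partial M}.
\]
Combining with the previous step, the two boundary integrals involving $\nu^\flat\cdot\Dirac u$ cancel, leaving the boundary contribution $\int_{\partial M}\langle u, \psi\clm(\nu^\flat)\RelDiracInv u\rangle$ together with the interior contribution $\int_M \langle u, (\psi^2 + \clm(\D\psi)\RelDiracInv) u\rangle$ coming from the zeroth-order part of $\Callias_\psi^2$.

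Finally, I would substitute the identity of \cref{lem:Dirac-Penrose-Mean-curvature} for $\int_M |\Dirac u|^2$, which supplies the Penrose term $\frac{n}{n-1}|\Penrose u|^2$, the curvature term $\frac{n}{n-1}\langle u, \Curv u\rangle$, and the boundary term $\langle u, (\tfrac{n}{2}\mean_g - \tfrac{n}{n-1}\BdDirac)u\rangle$. Collecting everything yields exactly \eqref{eq:general-callias-equality}.

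The only mild subtlety to check is that $\psi$ is a priori merely Lipschitz, so $\D\psi$ exists only almost everywhere and $\clm(\D\psi)\RelDiracInv$ is an $\Lp^\infty$-endomorphism; however, \eqref{eq:square_of_callias} already holds as an identity of $\Lp^2$-sections (as noted after its derivation), and since $u \in \Ct^\infty_\cpt(M,S)$ the two invocations of Green's formula are justified without difficulty. Thus the proof reduces to the straightforward algebraic combination of the three inputs above.
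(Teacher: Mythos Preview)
Your proof is correct and essentially the same as the paper's: both reduce the identity to \cref{lem:Dirac-Penrose-Mean-curvature} after isolating $\int_M|\Dirac u|^2$ and the cross term $\int_{\partial M}\langle u,\psi\clm(\nu^\flat)\RelDiracInv u\rangle$ via Green's formula. The only cosmetic difference is that the paper expands $|\Callias_\psi u|^2 = |\Dirac u|^2 + 2\Re\langle \Dirac u,\psi\RelDiracInv u\rangle + \psi^2|u|^2$ pointwise and applies Green once to the cross term, whereas you pass through $\langle u,\Callias_\psi^2 u\rangle$ and apply Green twice, with the $\nu^\flat\cdot\Dirac u$ boundary terms cancelling; the net computation is identical.
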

\begin{proof}
  We have
  \[
    | \Callias_\psi u |^2 = |\Dirac u|^2 + \langle \Dirac u, \psi \RelDiracInv u \rangle + \langle \psi \RelDiracInv u, \Dirac u \rangle  + \psi^2 |u|^2 
  \]
  Using Green's formula~\eqref{eq:Dirac-Green} on the second term implies
  \begin{align*}
    \int_M | \Callias_\psi u |^2\ \vol_M =& \int_M |\Dirac u|^2\ \vol_M \\
    &+ \int_M (\langle u, \underbrace{(\Dirac \psi \RelDiracInv + \psi \RelDiracInv \Dirac)}_{= \clm(\D \psi) \RelDiracInv} u \rangle + \psi^2 |u|^2 )\ \vol_M 
    + \int_{\partial M} \langle u, \nu^\flat \cdot  \psi \RelDiracInv u \rangle \ \vol_{\partial M}.
  \end{align*}
  Combining this with \cref{lem:Dirac-Penrose-Mean-curvature} yields the desired identity \eqref{eq:general-callias-equality}.
\end{proof}

Finally, this leads to the main result of this section.
\begin{thm} \label{thm:callias-estimate-with-boundary}
  Let \(S \to M\) be a relative Dirac bundle over a compact manifold \(M\), \(\psi \colon M \to \R\) be an admissible potential and \(s \colon \partial M \to \{\pm 1\}\) be a choice of signs.
  Then for every \(u \in \SobolevH^1_{\RelDiracInv, s}(M, S)\), the following estimate holds.
  \begin{align}
  \int_M | \Callias_\psi u |^2\ \vol_M &\geq \frac{n}{n-1} \int_M \langle u, \mathcal{R} u \rangle\ \vol_M + \int_M \langle u, (\psi^2 + \clm(\D \psi) \RelDiracInv)u \rangle\ \vol_M \nonumber\\
  &\quad + \int_{\partial M} \left(\tfrac{n}{2} \mean_g + s \psi \right) |\tau(u) |^2\ \vol_{\partial M}.\nonumber\\
  &\geq \frac{n}{n-1} \int_M \langle u, \mathcal{R} u \rangle\ \vol_M + \int_M \left(\psi^2 - |\D \psi|\right)|u|^2\ \vol_M \nonumber\\
  &\quad + \int_{\partial M} \left(\tfrac{n}{2} \mean_g + s \psi \right) |\tau(u) |^2\ \vol_{\partial M}.\label{eq:callias-estimate-boundary-condition}
  \end{align}
  Moreover, equality throughout both estimates in \labelcref{eq:callias-estimate-boundary-condition} holds if and only if
  \begin{equation}
    \forall \xi \in \Ct^\infty(M, \T M) \colon \quad \Penrose_\xi u = \nabla_\xi u + \frac{1}{n} \clm(\xi^\flat) \Dirac u = 0 \qquad \text{and} \quad \clm(\D \psi) \RelDiracInv u = -|\D \psi| u. \label{eq:callias-estimate-equality}
  \end{equation}
\end{thm}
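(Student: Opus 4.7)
The plan is to deduce both inequalities from the integral identity of Proposition~\ref{thm:callias-estimate} by applying the boundary condition $\chi u = u$ to collapse the boundary integrand, then discarding two non-negative contributions, and finally extending the estimate from smooth sections to the full Sobolev space by density. First I would apply~\labelcref{eq:general-callias-equality} to $u \in \Css^\infty(M,S)$. Since $\chi u|_{\partial M} = u|_{\partial M}$ with $\chi = s\,\clm(\nu^\flat)\RelDiracInv$, the term $\psi\,\clm(\nu^\flat)\RelDiracInv$ inside the boundary integrand becomes pointwise multiplication by $s\psi$. At the same time, the chirality anti-commutes with the boundary Dirac operator by~\labelcref{eq:chirality-anti-commutes}, which forces the pointwise cancellation $\langle u|_{\partial M}, \BdDirac u|_{\partial M}\rangle = 0$ recorded in~\labelcref{eq:BdDirac-vanishes-under-bd-condition}. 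Hence the boundary integral of~\labelcref{eq:general-callias-equality} collapses exactly to $\int_{\partial M}\bigl(\tfrac{n}{2}\mean_g + s\psi\bigr)|u|^2\,\vol_{\partial M}$.

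Next I would obtain the first displayed inequality by discarding the non-negative bulk term $\frac{n}{n-1}\int_M|\Penrose u|^2\,\vol_M$. For the second, the key point is that the fiberwise endomorphism $\clm(\D\psi)\RelDiracInv$ is self-adjoint with square $|\D\psi|^2\,\id$: self-adjointness follows because $\clm(\D\psi)$ is skew-adjoint and anti-commutes with the self-adjoint involution $\RelDiracInv$, and the squaring uses $\clm(\D\psi)^2 = -|\D\psi|^2$ together with $\RelDiracInv^2 = \id$. Its spectrum is therefore contained in $\{-|\D\psi|,+|\D\psi|\}$, yielding the sharp pointwise lower bound $\langle u, \clm(\D\psi)\RelDiracInv u\rangle \geq -|\D\psi|\,|u|^2$. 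Integration converts this into the second line of~\labelcref{eq:callias-estimate-boundary-condition}.

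Finally, to extend the resulting inequality from $\Css^\infty(M,S)$ to all of $\SobolevH^1_{\RelDiracInv, s}(M,S)$, I would invoke density of $\Css^\infty(M,S)$ in this space with respect to the graph norm of $\Callias_{\psi,s}$, which by Lemma~\ref{domain-does-not-change-lemma} and the discussion preceding it is equivalent to the $\SobolevH^1_{\RelDiracInv, s}$-norm and controls the trace continuously into $\SobolevH^{1/2}(\partial M, S|_{\partial M})$. Passing to the limit in each integral is then routine. The equality statement follows by inspection of the two estimation steps: equality throughout requires the Penrose term to vanish, forcing the twistor equation $\Penrose u = 0$, and the pointwise spectral bound to be saturated, forcing $u$ to lie in the $-|\D\psi|$-eigenspace of $\clm(\D\psi)\RelDiracInv$, i.e.\ $\clm(\D\psi)\RelDiracInv u = -|\D\psi|\,u$. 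I do not foresee a serious obstacle, as the non-trivial technical points---the self-adjoint boundary condition, the $\SobolevH^1$-trace continuity, and the pointwise spectral identity---are already packaged in the earlier sections; the only genuine care is the coordinated use of~\labelcref{eq:chirality-anti-commutes} and~\labelcref{eq:BdDirac-vanishes-under-bd-condition} to reduce the boundary integrand to the form claimed in~\labelcref{eq:callias-estimate-boundary-condition}.
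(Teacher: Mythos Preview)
Your proposal is correct and follows essentially the same approach as the paper's proof: both start from Proposition~\ref{thm:callias-estimate}, use the boundary condition together with \labelcref{eq:chirality-anti-commutes,eq:BdDirac-vanishes-under-bd-condition} to simplify the boundary integrand, drop the non-negative Penrose term and use the pointwise bound $\clm(\D\psi)\RelDiracInv \geq -|\D\psi|$, and extend by density/continuity to $\SobolevH^1_{\RelDiracInv,s}(M,S)$. The only cosmetic difference is that the paper first extends the full \emph{identity}~\labelcref{eq:boundary-conditions-callias-equality} (with the Penrose term still present) to Sobolev sections and then drops the non-negative terms, which makes the equality analysis immediate for general $u$; your order (inequalities first, then density) works equally well provided you keep in mind that the identity itself extends by continuity, which you implicitly use in the equality discussion.
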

\begin{proof}
  We first assume that \(u \in \Css^\infty(M,S)\).
  Then \(\clm(\nu^\flat) \RelDiracInv u|_{\partial M} = s \chi u|_{\partial M} = s u|_{\partial M}\) and \(\langle u|_{\partial M}, \BdDirac u|_{\partial M} \rangle = 0\), see~\labelcref{eq:boundary-involution,eq:BdDirac-vanishes-under-bd-condition}.
  Thus \eqref{eq:general-callias-equality} simplifies to the equality 
  \begin{align}
    \int_M | \Callias_\psi u |^2 =& \frac{n}{n-1} \int_M \left(|\Penrose u|^2 + \langle u, \Curv u \rangle \right)\ \vol_M + \int_M \langle u, (\psi^2 + \clm(\D \psi) \RelDiracInv) u \rangle\ \vol_M \nonumber
    \\ &+ \int_{\partial M} \langle \tau(u), (\tfrac{n}{2} \mean_g + s \psi ) \tau(u) \rangle\ \vol_{\partial M}. \label{eq:boundary-conditions-callias-equality}
  \end{align}
  Now we observe that both sides of the identity \eqref{eq:boundary-conditions-callias-equality} are continuous in \(u\) with respect to the topology of \(\SobolevH^1_{\RelDiracInv, s}(M, S)\).
  Thus \eqref{eq:boundary-conditions-callias-equality} still holds for all \(u \in \SobolevH^1_{\RelDiracInv, s}(M, S)\). 

The first estimate of~\labelcref{eq:callias-estimate-boundary-condition} now follows directly because \(|\Penrose u|^2 \geq 0\).
The second estimate follows from \(\clm(\D \psi) \RelDiracInv \geq - |\D \psi|\).
  Moreover, equality in the first estimate is equivalent to \(\int_M |\Penrose u|^2 = 0\) and thus \(\Penrose u = 0\).
  Equality for the second estimate is equivalent to 
  \[
    \int_{M} \langle u, (\clm(\D \psi)\RelDiracInv + |\D \psi|) u \rangle\ \vol_M = 0.
  \]
  Since the self-adjoint bundle endomorphism \(\clm(\D \psi)\RelDiracInv + |\D \psi|\) is fiberwise non-negative, this is equivalent to and \((\clm(\D \psi)\RelDiracInv + |\D \psi|)u = 0\), as claimed.
\end{proof}

\begin{rem}\label{rem:equality-kernel}
  Suppose that \(u\) lies in the kernel of \(\Callias_\psi\) and at the same time satisfies \labelcref{eq:callias-estimate-equality}.
  Then \(\Dirac u = - \psi \RelDiracInv u\) and so 
  \begin{equation}
    \nabla_\xi u = \frac{\psi}{n} \clm(\xi^\flat) \RelDiracInv u \label{eq:general_connection_formula_extremal_situation}
  \end{equation}
  for all \(\xi \in \T M\).
  On each point where \(\D \psi \neq 0\), this implies
  \[
   \nabla_\xi u = \frac{\psi}{n} \clm(\xi^\flat) \clm\left(\tfrac{\D \psi}{|\D \psi|}\right) u
  \]
  and in particular
  \[
    \nabla_{\frac{\nabla \psi}{|\nabla \psi|}} u = -\frac{\psi}{n}  u.
   \]
\end{rem}

In the final remark of this section, we prepare another technical observation in the context of the extremality case of~\cref{thm:callias-estimate-with-boundary} that we will use on multiple occasions.
Here we assume that the potential has a special form relevant for our applications.

\begin{rem}\label{rem:modified_section}
  Suppose that there is a \(1\)-Lipschitz function \(x \colon M \to I\) for some compact interval \(I \subseteq \R\) and let \(\varphi \colon I \to (0,\infty)\) be a smooth strictly logarithmically concave function such that \(\psi \coloneqq f(x) \coloneqq -n/2\ \varphi'(x)/\varphi(x)\) is an admissible potential for the relative Dirac bundle \(S \to M\) we consider (the geometric meaning of this relationship will become apparent later, compare~\cref{sec:WarpedProductRigidity} below).
  Now suppose that we are in the same situation as in \cref{rem:equality-kernel}, that is, we have an element \(u \in \ker(\Callias_{\psi,s}) \subseteq \SobolevH_{\RelDiracInv, s}^1(M,S)\) that realizes equality in \cref{thm:callias-estimate-with-boundary}.
  Then the section \(w \coloneqq \varphi(x)^{-\frac{1}{2}} u\) still lies in \(\SobolevH^{1}_{\RelDiracInv, s}(M, S)\) because \(\varphi(x)^{-\frac{1}{2}}\) is a Lipschitz function.
  Using the elementary computation
  \[
    \D\left(\varphi(x)^{-\frac{1}{2}}\right)(\xi) = \varphi(x)^{-\frac{1}{2}} \frac{\psi}{n}\ \D x(\xi),
  \]
  we deduce from \labelcref{eq:general_connection_formula_extremal_situation} that the (weak) covariant derivative of \(w\) satisfies almost everywhere
  \begin{equation}
    \nabla_\xi w = \frac{\psi}{n} (\D x(\xi) + \clm(\xi^\flat) \RelDiracInv) w \label{eq:modified-section-cov-derivative_remark-general}
  \end{equation}
  for every smooth vector field \(\xi\) on \(M\).
  In particular, \(\nabla w = 0\) on \(K\).
  Moreover, on the set \(\{\clm(\D x) u = \RelDiracInv u\} \coloneqq \{p \in M \setminus K \mid \clm(\D_p x) u_p = \RelDiracInv u_p\}\) we deduce that
  \begin{align*}
    \nabla_\xi w {=} &\frac{\psi}{n} (\D x(\xi) + \clm(\xi^\flat) \RelDiracInv) w \\
    {=} &\frac{\psi}{n} (\D x(\xi) + \clm(\xi^\flat) \clm(\D x)) w \\
    = &\frac{\psi}{n} \clm(\xi^\flat \wedge \D x ) w,
  \end{align*}
  where in the last step we use the the Clifford relation \(\clm(\D x) \clm(\xi^\flat) + \clm(\xi^\flat) \clm(\D x) = -2 \D x(\xi)\) and the notation \(\clm(\xi^\flat \wedge \D x ) \coloneqq \tfrac{1}{2}(\clm(\xi^\flat) \clm(\D x)  - \clm(\D x) \clm(\xi^\flat))\).
  
  In summary,
  \begin{equation}
  \nabla_\xi w = \frac{\psi}{n} \clm(\xi^\flat \wedge \D x ) w \qquad\text{almost everywhere on \(\{\clm(\D x) u = \RelDiracInv u \}\)}. \label{eq:modified-section-cov-derivative_dx1}
  \end{equation}
  Finally, since \(\clm(\xi^\flat \wedge \D x)\) is an anti-self-adjoint bundle endomorphism, \labelcref{eq:modified-section-cov-derivative_dx1} further implies that \(\D(|w|^2)(\xi) = 2 \langle w, \nabla_\xi w \rangle = 0\) almost everywhere on \(\{\clm(\D x) u = \RelDiracInv u \}\).
  Together with \labelcref{eq:modified-section-cov-derivative_remark-general} this implies that 
  \begin{equation} \label{eq:modified_section_derivative_of_norm_vanishes}
    \D |w|^2 = 0 \qquad \text{almost everywhere on \(K \cup \{\clm(\D x) u = \RelDiracInv u\}\)}.
  \end{equation}
  A priori, this only holds in the weak sense, but it still implies that \(|w|^2\) is a constant function if \(M\) is connected and we have \(K \cup \{\clm(\D x)u = \RelDiracInv u \} = M\).
  
  We also note that since \(\D \psi = f'(x) \D x\) and \(f'(x) > 0\), the second part of \labelcref{eq:callias-estimate-equality} implies that \(\{\clm(\D x) u = \RelDiracInv u\}\) contains the set of points in \(M \setminus K\) where \(|\D x| = 1\).
  But this is not a priori satisfied everywhere just under the hypotheses of this remark, and so we will verify the condition \(\clm(\D x)u = \RelDiracInv u\) directly in the specific applications when needed. 
\end{rem}

\section{A long neck principle with mean curvature}\label{sec:LongNeck}
In this section, we establish our long neck principle for Riemannian spin manifolds with boundary, relating the length of the neck to the scalar curvature in the interior and the mean curvature of the boundary.
Based on the technical preparations from the previous sections, we now directly enter into the proof of \cref{LongNeckProblem} starting with the following lemma.
\begin{lem}\label{lem_long_neck_GL_pair}
    Let \(M\) be a compact spin manifold with boundary and \(\Phi \colon M \to \Sphere^n\) a smooth map that is locally constant near \(\partial M\) and of non-zero degree, where \(n = \dim M \geq 2\) is even.
    Set \(l = \dist_g(\supp(\D \Phi), \partial M) > 0\).
    Then there exists a GL pair \((E,F)\) in the sense of \cref{ex:Gromov-Lawson-Relative} such that
    \begin{myenumi}[series=long_neck_proof_conditions]
    \item \(\Curv^{E \oplus F}_p \geq -a(p) \cdot n(n-1)/4\) at each point \(p \in M\), where \(a(p)\) is the area contraction constant of \(\Phi\) at \(p\); \label{item:EF_curvature_bound}
    \item $(E,F)$ has support $K \coloneqq \{p \in M \mid \dist_g(p, \partial M) \geq l \} \supseteq \supp(\dd \Phi)$; \label{item:EF_supported}
    \item $\relind(M;E,F)\neq 0$. \label{item:rel_ind_nonvanish}
\end{myenumi}
\end{lem}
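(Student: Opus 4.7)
The GL pair will be obtained by pulling back the half-spinor bundles of the unit round sphere: set
\[E \coloneqq \Phi^* \ReducedSpinBdl^+_{\Sphere^n}, \qquad F \coloneqq \Phi^* \ReducedSpinBdl^-_{\Sphere^n},\]
each equipped with the pulled-back Hermitian metric and Levi-Civita connection. Since $n$ is even, both bundles have rank $2^{n/2 - 1}$, so their fibers can be unitarily identified.

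For condition~(ii), note that $\D\Phi \equiv 0$ on the open set $M \setminus K = \{p \in M : \dist_g(p,\partial M) < l\}$, since $\dist_g(\supp(\D\Phi),\partial M) \geq l$. Hence $\Phi$ is locally constant on $M \setminus K$, and the pulled-back connections on $E$ and $F$ are flat there. On each connected component $C$ of $M \setminus K$ with $\Phi|_C \equiv p_i$, pick any unitary isomorphism $\ReducedSpinBdl^+_{p_i} \to \ReducedSpinBdl^-_{p_i}$ and promote it to a parallel unitary isomorphism $\mathfrak t \colon E|_C \to F|_C$; extending arbitrarily smoothly across a thin collar into $K$ produces the GL pair with support $K$.

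For condition~(i), the curvature endomorphism $\Curv^{E \oplus F}_p$ is the pullback of the spinor curvature endomorphism of $\ReducedSpinBdl^+_{\Sphere^n} \oplus \ReducedSpinBdl^-_{\Sphere^n}$ at $\Phi(p)$ and depends bilinearly on $\Phi_\ast \colon \Lambda^2 \T_p M \to \Lambda^2 \T_{\Phi(p)} \Sphere^n$, so its operator norm absorbs an overall factor of $a(p)$. The precise lower bound $-n(n-1)/4$ for the spinor curvature operator on the round $\Sphere^n$ is Llarull's eigenvalue computation~\cite{LLarull,Goette-Semmelmann}, which in combination yields the estimate in~(i).

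For condition~(iii), the Atiyah--Singer index theorem on the closed double $\double M$ together with the fact that $V(E,F)$ agrees with a fixed extension of $F$ away from $M$ reduces the computation of the relative index to
\[\relind(M;E,F) = \int_M \AhatClass(M) \wedge \Phi^*\bigl(\ch(\ReducedSpinBdl^+_{\Sphere^n}) - \ch(\ReducedSpinBdl^-_{\Sphere^n})\bigr).\]
The virtual bundle $\ReducedSpinBdl^+_{\Sphere^n} - \ReducedSpinBdl^-_{\Sphere^n}$ represents (up to sign) the Bott generator of $\widetilde{\mathrm K}(\Sphere^n)$, so its Chern character is concentrated in top degree as a nonzero multiple of the volume form of $\Sphere^n$. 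Only the degree-$0$ component of $\AhatClass(M)$ contributes, and the integral reduces to a nonzero multiple of $\deg(\Phi) \neq 0$. The main technical subtlety is calibrating the exact constant in~(i), which relies on Llarull's precise diagonalization of the spinor curvature operator on the round sphere, rather than a crude bound on its norm.
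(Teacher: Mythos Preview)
Your approach is correct and in some ways more direct than the paper's, though one step in~(ii) deserves a word of care.

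The paper does \emph{not} split the spinor bundle of the sphere.  Instead it fixes a single Llarull bundle $E_0\to\Sphere^n$, constructs an auxiliary smooth map $\Psi\colon M\to\Sphere^n$ that agrees with $\Phi$ on $M\setminus K$ but factors locally through geodesic arcs (so $\Psi_\ast$ vanishes on $2$-vectors), and sets $E=\Phi^\ast E_0$, $F=\Psi^\ast E_0$.  The identification $\mathfrak t$ then comes from a fixed trivialisation of $E_0$ over small balls around the finite set $\Phi(M\setminus K)$, and the index is computed by gluing $\Phi$ and $\Psi$ to a map $\Theta\colon\double M\to\Sphere^n$ of degree $\deg(\Phi)$.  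Your construction bypasses $\Psi$ entirely: since $E\oplus F=\Phi^\ast(\ReducedSpinBdl^+_{\Sphere^n}\oplus\ReducedSpinBdl^-_{\Sphere^n})=\Phi^\ast\ReducedSpinBdl_{\Sphere^n}$, Llarull's eigenvalue computation applies verbatim, and your index argument via $\ch(\ReducedSpinBdl^+)-\ch(\ReducedSpinBdl^-)$ being a top-degree generator is the standard one.  What you gain is brevity; what the paper's route gains is a uniform recipe (one bundle $E_0$, two maps) that makes the GL-pair structure and its smooth extension completely explicit.

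The one point where your write-up is loose is the phrase ``extending arbitrarily smoothly across a thin collar into $K$'' in~(ii).  If you choose the fibre isomorphism $\ReducedSpinBdl^+_{p_i}\to\ReducedSpinBdl^-_{p_i}$ independently on each component of $M\setminus K$, two components with the \emph{same} image $p_i$ whose closures meet in $\partial K$ could receive incompatible choices, obstructing a continuous extension.  The fix is immediate: choose one isomorphism per \emph{value} $p_i$ (not per component); then components mapping to distinct $p_i$ have disjoint closures by continuity of $\Phi$, while components sharing a value $p_i$ receive the same isomorphism.  Equivalently, pick any smooth bundle map $\ReducedSpinBdl^+_{\Sphere^n}\to\ReducedSpinBdl^-_{\Sphere^n}$ over $\Sphere^n$ that is unitary at each $p_i$, and pull it back by $\Phi$; this gives the required smooth extension on all of $M$ at once.
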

\begin{proof}
    We fix a base-point \(\ast \in \Sphere^n\).
    Since \(\Phi\) is locally constant on \(M \setminus K\) and \(\partial M\) has only finitely many components, there exist finitely many distinct points \(q_1, \dotsc, q_k \in \Sphere^n\) such that \(\Phi(M \setminus K) = \{q_1, \dotsc, q_k\}\).
    Let \(\Omega_i = \Phi^{-1}(q_i) \cap M \setminus K\).
    Then each \(\Omega_i\) is an open subset and \(M \setminus K = \bigsqcup_{i=1}^k \Omega_i\).
    By continuity, \(\Phi(\overline{\Omega_i}) = \{q_i\}\) and hence the closures \(\overline{\Omega_i}\), \(i = 1, \dotsc, k\), form a family of pairwise disjoint closed subsets of \(M\).
    Thus there exist open neighborhoods \(V_i \supset \overline{\Omega_i}\) that are still pairwise disjoint together with smooth functions \(\nu_i \colon M \to [0,1]\) such that \(\nu_i = 1\) on \(\overline{\Omega}_i\) and \(\nu_i = 0\) on \(M \setminus V_i\).
    Next we choose geodesic curves \(\gamma_i \colon [0,1] \to \Sphere^n\) such that \(\gamma_i(0) = \ast\) is the base-point and \(\gamma_i(1) = q_i\).
    From this we obtain a smooth map (see \cref{fig:Psi_construction})
    \[
        \Psi \colon M \to \Sphere^n, \qquad \Psi(p) = \begin{cases}
            \gamma_i(\nu_i(p)) & \text{if \(p \in V_i\),} \\
            \ast & \text{if \(p \in M \setminus \bigsqcup_{i=1}^k V_i\).}
        \end{cases}
    \]
    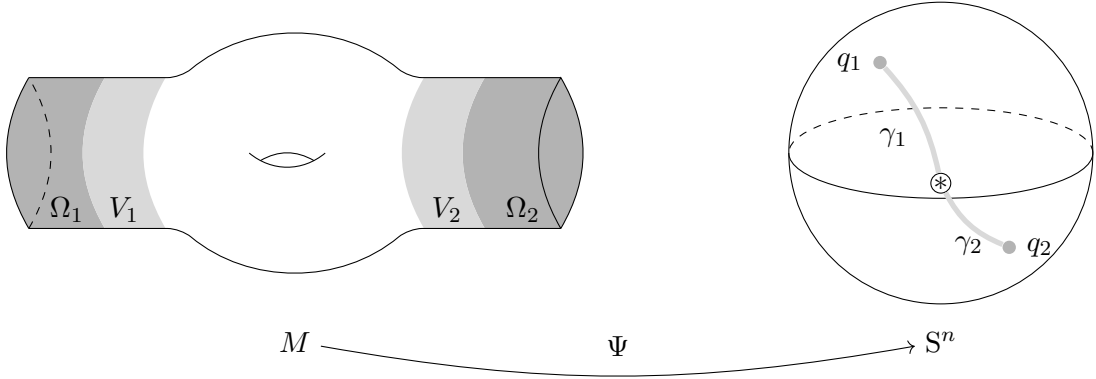
\begin{figure}[h]
\begin{center}
\begin{tikzpicture}
\coordinate (A) at (0,-1);
\coordinate (B) at (0,1);
\coordinate (C) at (-7,1);
\coordinate (D) at (-7,-1);
\coordinate (M) at (5,0);

\fill [gray!60] (A) to [bend left=30] (B) to ++ (-1,0) to [bend right=30] ($(A) + (-1,0)$) -- cycle;
\fill [gray!30] ($(A) + (-1,0)$) to [bend left=30] ($(B) + (-1,0)$) to ++ (-0.8,0) to [bend right=30] ($(A) + (-1.8,0)$) -- cycle;

\fill [gray!60] (D) to [bend left=30] (C) to ++ (1,0) to [bend right=30] ($(D) + (1,0)$) -- cycle;
\fill [gray!30] ($(D) + (1,0)$) to [bend left=30] ($(C) + (1,0)$) to ++ (0.8,0) to [bend right=30] ($(D) + (1.8,0)$) -- cycle;

\node at ($(A) + (-0.5,0.25)$) {\(\Omega_2\)} ;
\node at ($(A) + (-1.5,0.25)$) {\(V_2\)} ;
\node at ($(D) + (0.5,0.25)$) {\(\Omega_1\)} ;
\node at ($(D) + (1.25,0.25)$) {\(V_1\)} ;

\draw[fill=gray!60] (A) to [bend left=30] (B) to [bend left=30] (A);

\draw (D) to [bend left=30] (C);
\draw [dashed] (D) to [bend right=30] (C);

\draw [rounded corners=5] (A) to ++(-2,0) to [bend left=40]  ($ (D) + (2,0) $) to (D);
\draw [rounded corners=5] (B) to ++(-2,0) to [bend right=40]($ (C) + (2,0) $) to  (C);

\draw[bend left=40] (-3.1,-0) edge[name path=hd] (-4.1,-0);
\path[name path=hb] (-3.1,-0.2) to [bend right=40] (-4.1,-0.2);
\path[name intersections={of=hd and hb}];
\draw[bend right=30] (intersection-1) to (intersection-2);

\node (Mlabel) at (-3.5,-2.5) {\(M\)};

\draw (M) circle (2);
\draw ($(M) + (-2,0)$) arc (180:360:2 and 0.6);
\draw[dashed] ($(M) + (2,0)$) arc (0:180:2 and 0.6);

\node[circle,draw=black,minimum size=4pt,inner sep=0pt] (base) at ($(M) + (0,-0.4)$) {\(\ast\)};
\node[circle,fill=gray!60,minimum size=5pt,inner sep=0pt,label=left:{\(q_1\)}] (q1) at ($(M) + (-0.8,1.2)$) {};
\node[circle,fill=gray!60,minimum size=5pt,inner sep=0pt,label=right:{\(q_2\)}] (q2) at ($(M) + (0.9,-1.25)$) {};

\draw[line width=2pt,draw=gray!30] (base) to [bend right=15] node[below left] {\(\gamma_1\)} (q1);
\draw[line width=2pt,draw=gray!30] (base) to [bend right=20] node[below] {\(\gamma_2\)} (q2);

\node (Spherelabel) at ($(M) + (0,-2.5)$) {\(\Sphere^n\)};

\draw[->] (Mlabel) to [bend right=10] node[label=above:{\(\Psi\)}] {} (Spherelabel);

\end{tikzpicture}
\caption{Construction of the map \(\Psi\)}\label{fig:Psi_construction}
\end{center}
\end{figure}
    It follows that \(\Psi = q_i = \Phi\) on each \(\overline{\Omega_i}\) and, since \(\Psi\) by definition locally factors through smooth curves, the induced map \(\Psi_\ast \colon \Lambda^2 \T M \to \Lambda^2 \T \Sphere^n\) vanishes.
    
    Llarull's argument~\cite{LLarull} shows the existence of a Hermitian vector bundle \(E_0 \to \Sphere^n\) such that for any smooth map \(\Theta \colon X \to \Sphere^n\) with \(X\) any spin manifold, we have \(\Curv^{\Theta^\ast E_0} \geq -a_\Theta \cdot n(n-1)/4\), where \(a_{\Theta}\) is the area contraction function of \(\Theta\).
   Moreover, if \(X\) is closed and \(\Theta\) has non-zero degree, then \(\ind(\ReducedSpinDirac_{X, \Theta^\ast E_0}) \neq 0\).
   We now define Hermitian bundles \(E = \Phi^\ast E_0\) and \(F = \Psi^\ast E_0\) on \(M\).
   Then \(F\) is a flat bundle because \(\Psi\) induces the zero map on \(2\)-vectors.
   Thus Llarull's estimate for \(E = \Phi^\ast E_0\) shows that \labelcref{item:EF_curvature_bound} holds.
    
 To see that \labelcref{item:EF_supported} holds, we choose pairwise disjoint open balls \(U_1, \dotsc, U_k \subset \Sphere^n\) such that \(q_i \in U_i\) together with a unitary trivialization \(
   \mathfrak{t}_0 \colon E_0|_{U} \xrightarrow{\cong} U \times \C^r,
   \)
   where \(U = \bigsqcup_{i=1}^{k} U_i\).
   On the open set \(\mathcal{N} \coloneqq \Phi^{-1}(U) \cap \Psi^{-1}(U)\), this induces a unitary bundle isomorphism
   \[
       \mathfrak{t} \colon E|_{\mathcal{N}} \xrightarrow{\Phi^\ast \mathfrak{t}_0} \mathcal{N} \times \C^r \xrightarrow{(\Psi^\ast \mathfrak{t}_0)^{-1}} F|_{\mathcal{N}}.
    \]
    Note that \(M \setminus K \subseteq \Phi^{-1}(\{q_1, \dotsc, q_k\}) \cap \Psi^{-1}(\{q_1, \dotsc, q_k\}) \subseteq \mathcal{N}\) by construction and thus \(\mathcal{N}\) is an open neighborhood of \(\overline{M \setminus K}\).
    The bundle isomorphism is not parallel on all of \(\mathcal{N}\), but as \(\Phi = \Psi\) is locally constant on \(M \setminus K\), it follows that \(\mathfrak{t}|_{M \setminus K}\) is parallel.
  Thus the condition \labelcref{GromovLawsonAssumption} from \cref{ex:Gromov-Lawson-Relative} is satisfied showing that \((E,F)\) is a GL pair with support \(K\).
  
  Finally, let \(\Theta \colon \double M = M \cup_{\partial M} M^{-} \to \Sphere^n\) be the smooth map defined by \(\Theta|_{M} = \Phi\) and \(\Theta|_{M^{-}} = \Psi\).
  Then \(\deg(\Theta) = \deg(\Phi) \neq 0\) because \(\Psi\) has zero degree.
  By definition, we have \(\Theta^\ast E_0 = V(E,F)\) and thus
  \[
    \relind(M;E,F) = \ind\left(\ReducedSpinDirac_{\double M, V(E,F)}\right) = \ind\left(\ReducedSpinDirac_{\double M, \Theta^\ast E_0}\right) \neq 0
  \]
  by Llarull's argument.
  This shows that \labelcref{item:rel_ind_nonvanish} holds and concludes the proof of the lemma.
\end{proof}

\begin{proof}[Proof of \cref{LongNeckProblem}]
Suppose, by contradiction, that $\dist_g\bigl(\supp(\dd \Phi),\partial M\bigr) \geq l$ and \(\deg(\Phi) \neq 0\).
We then pick a GL pair \((E,F)\) satisfying the conditions \labelcref{item:EF_curvature_bound,item:EF_supported,item:rel_ind_nonvanish} from \cref{lem_long_neck_GL_pair}.
Moreover, restating the hypotheses of the theorem, we have the curvature bounds
\begin{myenumi}[resume=long_neck_proof_conditions]
    \item \(\scal_g \geq n(n-1)\) on \(M\); \label{item:Long_Neck:scal}
    \item $\mean_g\geq-\tan(nl/2)$ on \(\partial M\). \label{item:Long_neck:mean}
\end{myenumi}
We then consider the relative Dirac bundle \(S \to M\) constructed out of the GL pair \((E,F)\) as in \cref{ex:Gromov-Lawson-Relative}.
Recall from~\labelcref{E:GL-Lichnerowicz} that the relevant curvature term here takes the form
\[
    \Curv=\frac{1}{4}\scal_g+\Curv^{E\oplus F}.
\]
To construct a suitable admissible potential, we first define the \(1\)-Lipschitz function
\[
    x \colon M \to [0,l], \quad x(p) \coloneqq \min(\dist_g(K, p), l).
\]
Since $\dist_g\bigl(K,\partial M\bigr) \geq l$, we have that \(x|_{\partial M} = l\).
Let \(\varphi, f \colon [0, \pi/n) \to \R\) given by \(\varphi(t) = \cos(nt/2)^{2/n}\) and \(f(t) = -n/2\ \varphi'(t)/\varphi(t) = n/2 \tan(n t / 2)\), compare~\cref{rem:optimality}.
 Then we set \(\psi \coloneqq f(x) \coloneqq f \circ x\).
Since \(x\) is \(1\)-Lipschitz, it follows that
\begin{equation}
    \psi^2 - |\D \psi| = f(x)^2 - f'(x) |\D x| \geq f(x)^2 - f'(x) = -\frac{n^2}{4} \label{eq:long_neck_potential_estimate}
\end{equation}
almost everywhere on \(M\).
Moreover, by \labelcref{item:Long_neck:mean},
\begin{equation}
\psi|_{\partial M} = \frac{n}{2}\tan\left(\frac{n l}{2}\right) \geq - \frac{n}{2}\mean_g. \label{eq:long_neck_boundary_potential_estimate}
\end{equation}
Then \(\psi\) is an admissible potential for the relative Dirac bundle \(S\).
\Cref{relative-index-computation} together with \labelcref{item:rel_ind_nonvanish} implies that the corresponding Callias operator subject to the sign \(s = 1\) satisfies
\[
  \ind(\Callias_{\psi, 1}) = \relind(M;E,F)\neq 0.
\]
In particular, there exists an element \(0 \neq u \in \ker(\Callias_{\psi, 1})\).
To analyze \(u\) further, we let \(U \coloneqq \{p \in M \mid \dd_p \Phi \neq 0\}\) and \(U' \coloneqq \{p \in U \mid a(p) < 1/2\}\).
Then, by definition, \(U' \subseteq U\) are open subsets in the interior of \(M\) and \(U  \subseteq K\).
Since \(\Phi\) has non-zero degree, the set \(U\) must be non-empty.
Furthermore, as the map \(\Phi\) is locally constant near the boundary and \(M\) is connected, the intermediate value theorem implies that \(U'\) is also non-empty.
Then using our main spectral estimates~\labelcref{eq:callias-estimate-boundary-condition} from \cref{thm:callias-estimate-with-boundary}, we obtain
\begin{align*}
0 &\geq \frac{n}{n-1} \int_M  \frac{\scal_g}{4}|u|^2 + \langle u, \mathcal{R}^{E \oplus F} u \rangle\ \vol_M + \int_M \langle u, (\psi^2 + \clm(\D \psi) \sigma) u\rangle\ \vol_M \\
  &\qquad + \int_{\partial M} \underbrace{\left(\tfrac{n}{2} \mean_g + \psi \right)}_{\text{\(\geq 0\) by \labelcref{eq:long_neck_boundary_potential_estimate}}} |\tau(u) |^2\ \vol_{\partial M} 
  \intertext{and continuing the estimate using that \(\psi = 0\) on \(U\) and \(\Curv^{E\oplus F} = 0\) on \(M \setminus U\) leads to}
  &\geq \frac{n}{n-1} \int_{U}  \underbrace{\frac{\scal_g}{4}|u|^2 + \langle u, \mathcal{R}^{E \oplus F} u \rangle}_{\text{\(\geq 0\) by \labelcref{item:Long_Neck:scal,item:EF_curvature_bound} }}\ \vol_M \\
  &\quad +  \int_{M \setminus U}  \underbrace{\frac{n}{n-1}\frac{\scal_g}{4}|u|^2 + (f^2(x) - f'(x)) |u|^2}_{\text{\(\geq 0\) by \labelcref{item:Long_Neck:scal} and \labelcref{eq:long_neck_potential_estimate}}} +  f'(x)\langle u, (1 - \clm(\D x) \RelDiracInv)u \rangle\ \vol_M\\
  & \geq \frac{n}{n-1} \int_{U'}  \frac{\scal_g}{4}|u|^2 + \langle u, \mathcal{R}^{E \oplus F} u \rangle\ \vol_M
  + \int_{M \setminus U} f'(x) \langle u, (1 - \clm(\D x) \RelDiracInv)u \rangle\ \vol_M\\
  &\geq \int_{U'} \left(\frac{n^2}{4} - \frac{n^2}{8}\right)|u|^2\ \vol_M  + \int_{M \setminus U} f'(x) \langle u, (1 - \clm(\D x) \RelDiracInv)u \rangle\ \vol_M\geq 0,
  \end{align*}
  where in the last step we used \labelcref{item:Long_Neck:scal} and \labelcref{item:EF_curvature_bound} together with the fact that the area contraction constant is at most \(1/2\) on \(U'\) by definition.
  We conclude that we are in the equality situation of \cref{thm:callias-estimate-with-boundary}.
  Furthermore, since the last two integrands are separately non-negative, we also deduce \(u = 0\) on \(U'\) and \(f'(x) \langle u, (1 - \clm(\D x) \RelDiracInv)u \rangle =0\) on \(M \setminus U\).
  Since \(f'(x) > 0\) and \(|\D x| \leq 1\), the latter implies that \(\clm(\D x) u = \RelDiracInv u\) almost everywhere on \(M \setminus U\). 
  Hence it follows from \labelcref{eq:modified_section_derivative_of_norm_vanishes} in \cref{rem:modified_section} above that the modified section \(w = \varphi^{-\frac{1}{2}} u\) has a constant norm.
  But since \(u\) vanishes on the non-empty open subset \(U'\) and \(\varphi > 0\), this implies that \(u\) vanishes almost everywhere, a contradiction.
\end{proof}
The precise analysis of the equality situation in the proof above is necessary to rule out the case \(\dist_g(\D \Phi, \partial M) = l\).
If we only wanted to establish the non-strict estimate \(\dist_g(\D \Phi, \partial M) \leq l\) for \(\deg(\Phi) \neq 0\), then this could be proved in a simpler way along the same lines as in~\cite[Proof of Theorem~A]{Ce20} by directly showing the operator \(\Callias_{\psi,1}\) must be invertible if \(\dist_g(\D \Phi, \partial M) > l\).

We now show that \cref{LongNeckProblem} is in fact sharp.
The proof of the following proposition is an almost verbatim adaption of a construction due to \citeauthor{GromovLawson:PSCDiracComplete}~\cite[Proposition~6.7]{GromovLawson:PSCDiracComplete}, see also~\cite[Chapter IV, Proposition~6.10]{LawsonMichelsohn:SpinGeometry}.
\begin{prop}\label{prop:long_neck_sharp}
For every \(n \geq 2\), \(\varepsilon > 0\) and $0 < l < \pi/n$, there exists a compact connected \(n\)-dimensional Riemannian spin manifold \((M,g)\) and a smooth area non-increasing map \(\Phi \colon M \to \Sphere^n\) of non-zero degree such that 
\begin{myenumi}
    \item \(\scal_g = n(n-1)\);
    \item $\mean_g = -\tan(nl/2)$;
    \item $\dist_g\bigl(\supp(\dd \Phi),\partial M\bigr) \geq l - \varepsilon$.
\end{myenumi}
In particular, \cref{LongNeckProblem} is sharp.
\end{prop}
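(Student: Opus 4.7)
The plan is to adapt Gromov and Lawson's construction in \cite[Proposition~6.7]{GromovLawson:PSCDiracComplete} (compare also \cite[Chapter~IV, Proposition~6.10]{LawsonMichelsohn:SpinGeometry}) to the long-neck setting. The starting observation is that the round \(n\)-sphere has scalar curvature \(n(n-1)\), and the geodesic sphere of radius \(\rho\) in it has mean curvature \(-\cot(\rho)\) with respect to the inward unit normal. Choosing \(\rho = \pi/2 - nl/2 \in (0,\pi/2)\) thus yields \(-\cot(\rho) = -\tan(nl/2)\), so as a first attempt I would take \((M,g)\) to be the closed geodesic ball of radius \(\rho\) in \((\Sphere^n, g_{\mathrm{round}})\). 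This realizes properties (i) and (ii) for free.

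Next, I would define the map \(\Phi \colon M \to \Sphere^n\). Using polar coordinates \((r,\theta) \in [0,\rho] \times \Sphere^{n-1}\) around the basepoint of \(M\) and analogous coordinates on \(\Sphere^n\), set \(r_1 \coloneqq \rho - (l-\varepsilon)\) and write \(\Phi(r,\theta) = (\alpha(r),\theta)\), where \(\alpha \colon [0,\rho] \to [0,\pi]\) is smooth with \(\alpha(0) = 0\), \(\alpha(r_1) = \pi\), and \(\alpha \equiv \pi\) on \([r_1,\rho]\). Such a \(\Phi\) is locally constant on the outer collar of width \(l-\varepsilon\), so \(\supp(\dd\Phi) \subseteq \{r \leq r_1\}\), yielding (iii); since the inner portion covers \(\Sphere^n\) once, \(\deg(\Phi) = \pm 1\). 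The area non-increasing condition amounts to the two inequalities \(\sin(\alpha(r)) \leq \sin(r)\) and \(|\alpha'(r)|\,\sin(\alpha(r))/\sin(r) \leq 1\) on \([0, r_1]\); one chooses \(\alpha\) to navigate these constraints by starting with \(\alpha(r) = r\) on an initial segment, then following the upper branch \(\alpha(r) \geq r\) for \(r > \pi/2\), and finally reaching \(\pi\) at \(r_1\).

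The main obstacle is the explicit construction of a valid \(\alpha\) --- and, if necessary, of a suitably modified triple \((M,g,\Phi)\) --- satisfying all the constraints simultaneously. In regimes where \(l\) is close to \(\pi/n\) the radius \(\rho\) becomes very small relative to the required collar width \(l-\varepsilon\), and the simple rotationally symmetric ball ansatz no longer leaves enough radial room to bring \(\alpha\) all the way from \(0\) to \(\pi\) without violating the product bound on the singular values. To handle this, I would follow Gromov and Lawson and enlarge \(M\) via a warped product construction (attaching a suitable spin piece equipped with a scalar curvature \(n(n-1)\) metric), or replace the radial \(\Phi\) by a non-rotationally-symmetric degree-one map built with the aid of the exponential map of \(\Sphere^n\). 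This technical adjustment, which is the core of the Gromov--Lawson argument being adapted, is the step I expect to require the most care, since simultaneously achieving exact scalar curvature \(n(n-1)\), the prescribed boundary mean curvature, and a \emph{non-trivial} degree while remaining area non-increasing is a tight balancing act.
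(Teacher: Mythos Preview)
Your spherical-ball ansatz has a sign slip: in the paper's convention (inward unit normal, \(\mean = \tfrac{1}{n-1}\tr(-\nabla\nu)\)) the geodesic sphere of radius \(\rho\) in \(\Sphere^n\), viewed as the boundary of the ball, has mean curvature \(+\cot(\rho)\), not \(-\cot(\rho)\). To hit \(-\tan(nl/2)\) you would need \(\rho = \pi/2 + nl/2\), not \(\pi/2 - nl/2\). More seriously, even after fixing this, the rotationally symmetric construction cannot work in general. For \(n=2\) the area-non-increasing condition means \(|\mathrm{Jac}\,\Phi|\leq 1\), so a degree-one map \(\Phi\colon M\to \Sphere^2\) forces \(\mathrm{Area}(\supp(\dd\Phi))\geq \mathrm{Area}(\Sphere^2)=4\pi\). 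But \(\supp(\dd\Phi)\) is contained in a spherical disk of radius \(r_1 = \rho - (l-\varepsilon) = \pi/2 + \varepsilon\), whose area is barely above \(2\pi\); no choice of \(\alpha\) can rescue this. Your fallback suggestion (``enlarge \(M\) via a warped product, or use a non-rotationally-symmetric map'') is too vague to count as a fix: the point is precisely that a sphere-based model has no mechanism for making the map arbitrarily area-contracting while keeping the metric fixed.

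The paper's construction avoids this entirely by working over a \emph{torus} rather than a sphere. One takes the warped product \(\varphi^2 g_{\Torus^{n-1}} + \D x\otimes \D x\) on \(\Torus^{n-1}\times[-l,l]\) with \(\varphi(t)=\cos(nt/2)^{2/n}\); this already has \(\scal = n(n-1)\) and boundary mean curvature \(-\tan(nl/2)\). The crucial move is then to pass to a large finite cover \(\widetilde{\Torus}^{n-1}\to \Torus^{n-1}\), which admits a \(\delta\)-Lipschitz map \(h\colon \widetilde{\Torus}^{n-1}\to \Sphere^{n-1}\) of non-zero degree for \(\delta\) as small as desired, and to build \(\Phi\) as \(\Theta\circ(h\times\gamma)\) where \(\gamma\colon[-l,l]\to \Sphere^1\) has degree one and is constant outside \((-\varepsilon,\varepsilon)\), and \(\Theta\colon \Sphere^{n-1}\times \Sphere^1\to \Sphere^n\) is a degree-one map factoring through the smash product. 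The smash-product factorisation kills the area contribution of the \(\gamma\)-factor alone, so the area-contraction constant of \(\Phi\) is controlled by \(\delta\) times a fixed constant. This torus-and-finite-cover trick is the missing idea in your proposal.
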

\begin{proof}
We again work with the example from~\cref{rem:optimality}, that is, consider \(V \coloneqq \Torus^{n-1} \times [-l,l]\) endowed with the metric \(g_V = \varphi^2 g_{\Torus^{n-1}} + \D x \otimes \D x\), where \(g_{\Torus^{n-1}}\) is the flat torus metric and \(\varphi(t) = \cos(nt/2)^{2/n}\).
Then \(\scal_{g_V} = n(n-1)\) and \(\mean_{g_V} = -\tan(nl/2)\).
Let \(\Sphere^1\) be the circle of radius \(1\) and \(\ast \in \Sphere^1\) a base-point.
We choose a smooth map \(\gamma \colon [-l,l] \to \Sphere^1\) of degree one such that \(\gamma\) takes \([-l,l] \setminus (-\varepsilon, \varepsilon)\) to the base-point \(\ast \in \Sphere^1\).
For any \(\delta > 0\) we can find a finite covering \(\widetilde{\Torus}^{n-1} \to \Torus^{n-1}\) together with a \(\delta\)-Lipschitz map \(h \colon \widetilde{\Torus}^{n-1} \to \Sphere^{n-1}\) of non-zero degree.
Then we set \(M \coloneqq \widetilde{\Torus}^{n-1} \times [-l,l]\) endowed with the lifted metric \(g \coloneqq \varphi^2 g_{\widetilde{\Torus}^{n-1}} + \D x \otimes \D x\).
This clearly still satisfies \(\scal_g = n(n-1)\) and \(\mean_{g} = -\tan(nl/2)\).
Let \(\Theta \colon \Sphere^{n-1} \times \Sphere^1 \to \Sphere^{n}\) be a smooth map of degree \(1\) which factors through the smash product \(\Sphere^{n-1} \wedge \Sphere^1\).
If \(\delta\) is sufficiently small, then it follows that the composition 
\[
    \Phi \colon M = \widetilde{\Torus}^{n-1} \times [-l,l] \xrightarrow{h \times \gamma} \Sphere^{n-1} \times \Sphere^{1} \xrightarrow{\Theta} \Sphere^{n}.
\]
is \((\delta \cdot C)\)-area-contracting for some constant \(C > 0\) which only depends on \(\varepsilon\) and the Lipschitz constants of \(\gamma\) and \(\Theta\); this can be seen using~\cite[Proposition~6.3]{GromovLawson:PSCDiracComplete}. 
By having chosen \(\delta\) sufficiently small, we can thus arrange that \(f\) is area non-increasing.
Moreover, it follows that \(\deg(\Phi) = \deg(h) \neq 0\).
Finally, the support of \(\D \Phi\) is contained in \(\widetilde{\Torus}^{n-1} \times [-\varepsilon, \varepsilon]\) by construction; hence $\dist_g\bigl(\supp(\dd \Phi),\partial M\bigr) \geq l - \varepsilon$, as claimed.
\end{proof}
Note that unlike the extremal examples for the band and collar width estimates discussed in~\cref{rem:optimality}, the bounds from~\cref{LongNeckProblem} are only approximately realized.
Indeed, it is not possible to do better because---unlike in our other results---\cref{LongNeckProblem} actually rules out the equality situation.
\section{Relative K- and \texorpdfstring{\(\Ahat\)}{A-hat}-area}\label{S:K-area}
In this section, we aim to prove \cref{CollarInfiniteKArea} from the introduction.
We begin with a version of K-area for manifolds with boundary.
The classical case has been introduced by Gromov in~\cite[\S 4]{Gromov:KArea}.
A slightly less general variant of the following was recently studied by~\citeauthor{BH20}~\cite{BH20}.
\begin{defi}
    Let \((M,g)\) be a compact orientable Riemannian manifold with boundary.
    An \emph{admissible pair} of bundles over \(M\) is a pair of Hermitian bundles \(E,F \to M\) endowed with metric connections such that there exists a unitary parallel bundle isomorphism \(E|_{\partial M} \xrightarrow{\cong} F|_{\partial M}\) along \(\partial M\).
    We say that an admissible pair \((E,F)\) has a non-trivial Chern number, if there exists a polynomial \(p(\chernclass_0, \chernclass_1, \dotsc)\) in the Chern forms such that
    \begin{equation} \label{eq:rel-chern-number}
        \int_M p(\chernclass_0(E), \chernclass_1(E), \dotsc) - p(\chernclass_0(F), \chernclass_1(F), \dotsc) \neq 0.
    \end{equation}
    The \emph{relative \(\K\)-area} of \((M, \partial M)\) is the infimum of the numbers 
    \[
        \|\FullCurv^{E \oplus F}\|_\infty^{-1},    
    \]
    where \((E,F)\) ranges over all admissible pairs of bundles which have a non-trivial Chern number.
\end{defi}

\begin{rem}
    Since the bundles with all structures are isomorphic along the boundary, the Chern--Weil forms associated to \(E\) and \(F\) agree if pulled-back to the boundary, and so \labelcref{eq:rel-chern-number} is a well-defined cohomological expression.
\end{rem}

\begin{rem}
    As in the classical case, the property of having \emph{infinite} relative K-area does not depend on the Riemannian metric.
\end{rem}

We now relate the notion of infinite relative $\K$-area for a manifold with boundary $M$ with the notion of infinite K-area for its double $\double M$.
For a pair of Hermitian bundles $E$, $F\to M$ which are suitably identified in a neighborhood of $\partial M$, as in \cref{sec:index-theory} we denote by $V(E,F)$ the Hermitian bundle on $\double M$ obtained by glueing $E$ and $F$ over a neighborhood of $\partial M$.

\begin{prop}\label{k-area-relative-vs-double}
    Let \(M\) be a compact manifold with boundary.
    The following conditions are equivalent:
    \begin{myenuma}
        \item The double \(\double M\) has infinite \(\K\)-area in the classical sense. \label{item:infinite-absolute-K}
        \item \((M, \partial M)\) has infinite relative \(\K\)-area. \label{item:infinite-relative-K}
    \end{myenuma}
\end{prop}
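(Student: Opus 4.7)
The plan is to establish the equivalence via a cut-and-glue argument across \(\partial M\), using restriction of a bundle from \(\double M\) to its two halves in one direction, and the gluing construction \(V(\blank, \blank)\) from \cref{sec:index-theory} in the other.

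For \labelcref{item:infinite-absolute-K}\(\Rightarrow\)\labelcref{item:infinite-relative-K}, I would fix \(\varepsilon > 0\) and choose a Hermitian bundle \(W \to \double M\) with metric connection satisfying \(\|\FullCurv^{W}\|_\infty < \varepsilon\) and a non-trivial Chern number \(\int_{\double M} p(\chernclass_\bullet(W)) \neq 0\) for some polynomial \(p\). Setting \(E \coloneqq W|_{M}\) and \(F \coloneqq W|_{M^{-}}\), viewed as bundles on \(M\) via the canonical identification \(M \cong M^{-}\), the identity on \(W|_{\partial M}\) is tautologically a parallel unitary isomorphism \(E|_{\partial M} \xrightarrow{\cong} F|_{\partial M}\), so \((E,F)\) is admissible with \(\|\FullCurv^{E\oplus F}\|_\infty = \|\FullCurv^{W}\|_\infty < \varepsilon\). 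Orientation reversal on \(M^{-}\) then yields
\[
    \int_{\double M} p(\chernclass_\bullet(W)) = \int_M p(\chernclass_\bullet(E)) - \int_M p(\chernclass_\bullet(F)),
\]
which is non-zero, so \((E,F)\) has non-trivial relative Chern number. Since \(\varepsilon\) was arbitrary, \((M,\partial M)\) has infinite relative \(\K\)-area.

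The converse \labelcref{item:infinite-relative-K}\(\Rightarrow\)\labelcref{item:infinite-absolute-K} runs in reverse: given an admissible pair \((E,F)\) on \(M\) with \(\|\FullCurv^{E\oplus F}\|_\infty < \varepsilon\) and non-trivial relative Chern number, I would form \(W \coloneqq V(E,F)\) on \(\double M\) using the parallel identification along \(\partial M\). The same decomposition formula shows that a relevant Chern number of \(W\) is non-zero, while \(\|\FullCurv^{W}\|_\infty \leq \|\FullCurv^{E\oplus F}\|_\infty < \varepsilon\), so \(\double M\) has infinite \(\K\)-area.

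The main technical point requiring care is ensuring that \(V(E,F)\) is smooth across \(\partial M\), since admissibility only provides a parallel unitary isomorphism on \(\partial M\) itself and not on a neighborhood. I plan to resolve this by a \emph{collar adaptation}: first replace \(E\) and \(F\) on a thin tubular neighborhood \(\partial M \times [0,\delta) \subset M\) by the pullbacks of \(E|_{\partial M}\) and \(F|_{\partial M}\) under the normal projection, equipped with the connections extended from the boundary by parallel transport along inward normal geodesics, and smoothly interpolate with the original connection away from \(\partial M\). By taking \(\delta\) small, this modification raises the supremum curvature by an arbitrarily small prescribed amount and leaves each bundle unchanged up to isomorphism, so the relative Chern number---a cohomological invariant---is preserved. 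Under this preparation, the given boundary identification extends uniquely to a parallel unitary isomorphism on the entire collar, making \(V(E,F)\) a genuine smooth Hermitian bundle with metric connection on \(\double M\).
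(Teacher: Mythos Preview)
Your overall strategy---restricting a bundle on \(\double M\) to its two halves for \labelcref{item:infinite-absolute-K}\(\Rightarrow\)\labelcref{item:infinite-relative-K}, and gluing an admissible pair into \(V(E,F)\) for the converse---is correct and is exactly what the paper does. You also correctly isolate the one genuine technical issue: an admissible pair only comes with a parallel identification \emph{on} \(\partial M\), not on a collar, so some normalization is needed before \(V(E,F)\) makes sense as a smooth bundle with connection.

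The gap is in your quantitative claim about this normalization. Interpolating between \(\nabla^E\) and the product connection on a collar of width \(\delta\) does \emph{not} change the curvature by an amount that tends to zero as \(\delta\to 0\). Writing the interpolated connection as \(\nabla' = \nabla^E + \chi(t/\delta)(B-A)\) in radial gauge, one finds
\[
F^{\nabla'} = (1-\chi)F^{\nabla^E} + \chi\, \pi^\ast F^{\nabla^E|_{\partial M}} + \tfrac{1}{\delta}\chi'(t/\delta)\,\D t\wedge(B-A) - \tfrac{\chi(1-\chi)}{2}[A-B,A-B].
\]
Since \(|B-A|\leq t\,\|F^{\nabla^E}\|_\infty\) in radial gauge, the \(\D t\)-term is bounded by \(|\chi'|_\infty\,\|F^{\nabla^E}\|_\infty\), which is of the same order as the original curvature and does \emph{not} shrink with \(\delta\). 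So the modification multiplies \(\|\FullCurv\|_\infty\) by a universal constant \(C>1\) rather than by \(1+o(1)\). Fortunately, a universal constant is exactly what is needed to detect \emph{infinite} K-area, so your argument is salvageable once the claim is weakened.

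The paper sidesteps this computation entirely: it fixes once and for all a smooth map \(\psi\colon \double M\to \double M\), homotopic to the identity through maps preserving each half, which collapses a tubular neighborhood of \(\partial M\) onto \(\partial M\). Pulling back any bundle along \(\psi\) puts it in product form near \(\partial M\), preserves all Chern numbers (homotopy invariance), and changes the curvature by at most the universal factor \(L^2\) where \(L\) is the Lipschitz constant of \(\psi\). This is both cleaner and treats the two directions symmetrically.
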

\begin{proof}
    We fix a smooth Riemannian metric on \(\double M\) and endow \(M\) with the restricted metric for the purposes of this argument.
    Then the statement would become immediate, if---in the definition of K-area---we only considered bundles which restricted to a fixed tubular neighborhood \(\partial M \times [-1,1] \cong U \subset \double M\) are of \emph{product structure}, that is, they are of the form \(\proj_1^\ast E_\partial\) for some bundle \(E_\partial \to \partial M\) together with the pullback connection. 
    Indeed, a bundle \(E \to \double M\) of this form is just a pair of two bundles on \(M\) which are identified and of product structure on the corresponding collar neighborhood \(U \cap M\) of \(\partial M\).
    Conversely, suppose \(E,F\to M\) have product structure and are identified on the collar neighborhood \(U \cap M\).
    Then the Chern numbers of \(V(E,F)\) agree with the corresponding relative Chern numbers \eqref{eq:rel-chern-number}.
    
    The general case can be reduced to this observation by using a smooth map \(\psi \colon \double M \to M\) such that \(\psi\) is smoothly homotopic to the identity map on \(\double M\) through maps which are the identity on \(\partial M\) and preserve each half of \(\double M\), and such that \(\psi|_{U}\) agrees with the projection onto \(\partial M\). 
    In fact, \(\psi\) can be constructed on a slightly larger tubular neighborhood by manipulating the radial coordinate and setting it to be the identity on the rest of \(\double M\).
    Then, for any bundle \(E \to \double M\), the pullback \(\psi^\ast E\) is of the special form described in the previous paragraph.
    Since \(\psi\) is homotopic to the identity, \(\psi^\ast E\) and \(E\) have the same Chern numbers (and the same applies to the relative Chern number \labelcref{eq:rel-chern-number} when passing from a pair of bundles \((E,F)\) to \((\psi|_{M}^\ast E, \psi|_{M}^\ast F)\)).
    Moreover, \(\psi\) is a smooth map on a compact manifold, it admits some fixed Lipschitz constant \(L > 0\), and hence passing from \(E\) to \(\psi^\ast E\) only changes the norm of the curvature by at most a factor of \(L^2\).
    Hence, for the purposes of detecting \emph{infinite} K-area, it makes no difference to restrict to the class of (pairs of) bundles described in the previous paragraph, and the proposition follows.
\end{proof}

The notion of K-area is quite appealing because it is a purely bundle-theoretic property of the manifold and does not rely on spin structures or index theory.
However, to apply it to positive scalar curvature geometry via spin geometry, the non-vanishing property \labelcref{eq:rel-chern-number} has to be translated (by potentially changing the bundle) into a property which can be used in the Atiyah--Singer index theorem for the Dirac operator. 
This is possible by the classical algebraic argument using Adams operations given in \cite[\S\(5 \frac{3}{8}\)]{Gromov:KArea} (see also~\citeauthor{BH20}~\cite[Lemma~7]{BH20} for a situation closer to the present context).
In the following, we introduce an explicit notion to capture the resulting property.

\begin{defi}
    Let \((M,g)\) be a compact Riemannian manifold with boundary.
    The \emph{relative \(\Ahat\)-area} of \((M, \partial M)\) is the infimum of the numbers 
    \[
        \|\FullCurv^{E \oplus F}\|_\infty^{-1},    
    \]
    where \((E,F)\) ranges over all admissible pairs of bundles such that
    \begin{equation}
        \int_M \AhatClass(M) \wedge \ch(E) - \int_M \AhatClass(M) \wedge \ch(F) \neq 0.
        \label{eq:rel-Ahat-number}
    \end{equation}
    Here \(\AhatClass(M)\) denotes the \(\Ahat\)-form of \(M\) and \(\ch\) denotes the Chern character form.
\end{defi}

\begin{rem} \label{rem:Ahat-area-without-boundary}
    The notion of $\Ahat$-area for closed manifolds was discussed in \cref{def:Ahat area}.
    An analogous argument as in \cref{k-area-relative-vs-double} shows that \((M, \partial M)\) has infinite relative \(\Ahat\)-area if and only if the double \(\double M\) has infinite \(\Ahat\)-area.
\end{rem}

In the next proposition, we clarify the relationship between the notions of infinite relative K-area and infinite relative \(\Ahat\)-area.

\begin{prop}
    Let \(M\) be a manifold with boundary of infinite relative \(\K\)-area and \(N\) a closed manifold such that \(\Ahat(N) \neq 0\).
    Then \(M \times N\) has infinite relative \(\Ahat\)-area.
    In particular, infinite relative \(\K\)-area implies infinite relative \(\Ahat\)-area.
\end{prop}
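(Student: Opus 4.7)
The plan is to reduce the statement to one about $M$ alone via a Fubini-type computation, and then invoke a standard K-theoretic manipulation using Adams operations to convert an arbitrary non-trivial relative Chern number into a non-trivial $\AhatClass$-weighted Chern character integral, all while preserving smallness of curvature.

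First I would observe that for any admissible pair $(E,F)$ on $M$, its pullback $(E_1, F_1) \coloneqq (\proj_M^\ast E, \proj_M^\ast F)$ along the projection $\proj_M \colon M \times N \to M$ is admissible on $M \times N$ and has identical curvature norm $\|\FullCurv^{E_1 \oplus F_1}\|_\infty = \|\FullCurv^{E \oplus F}\|_\infty$ under the product metric. Using the multiplicativity $\AhatClass(M \times N) = \proj_M^\ast \AhatClass(M) \wedge \proj_N^\ast \AhatClass(N)$ and Fubini's theorem, this yields the factorization
\begin{equation*}
\int_{M \times N} \AhatClass(M \times N) \wedge (\ch(E_1) - \ch(F_1)) \;=\; \Ahat(N) \cdot \int_M \AhatClass(M) \wedge (\ch(E) - \ch(F)).
\end{equation*}
Since $\Ahat(N) \neq 0$ by hypothesis, it then suffices to show that for every $\epsilon > 0$ the manifold $M$ admits an admissible pair $(E,F)$ with $\|\FullCurv^{E \oplus F}\|_\infty \leq \epsilon$ and $\int_M \AhatClass(M) \wedge (\ch(E) - \ch(F)) \neq 0$.

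For the second step, given $\epsilon' > 0$, infinite relative $\K$-area of $M$ supplies an admissible pair $(E_0, F_0)$ with $\|\FullCurv^{E_0 \oplus F_0}\|_\infty \leq \epsilon'$ and a non-vanishing relative Chern number $\int_M p(\chernclass(E_0)) - p(\chernclass(F_0)) \neq 0$ for some polynomial $p$. Following Gromov's strategy from \cite[\S\(5\tfrac{3}{8}\)]{Gromov:KArea} (compare also \cite[Lemma~7]{BH20}), I would apply a fixed finite collection of Adams operations $\psi^k$ (for $k = 1, \dotsc, \dim M + 1$) together with tensor products and integer linear combinations of $E_0$ and $F_0$ in order to construct a virtual admissible pair $(E', F')$ whose Chern character difference is concentrated in cohomological degree $\dim M$ and equals, up to a nonzero rational scalar, the top-degree class $p(\chernclass(E_0)) - p(\chernclass(F_0))$. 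For such a pair only the degree-zero component of $\AhatClass(M)$ contributes, so
\begin{equation*}
\int_M \AhatClass(M) \wedge (\ch(E') - \ch(F')) \;=\; \int_M \bigl[\ch(E') - \ch(F')\bigr]_{\dim M} \neq 0.
\end{equation*}

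The hard part will be maintaining the curvature bound for $(E', F')$ as the ranks of $(E_0, F_0)$ may grow with $\epsilon' \to 0$. I would handle this via the splitting principle: Adams operations $\psi^k$ preserve rank and satisfy $\|\FullCurv^{\psi^k E}\|_\infty \leq C(k)\, \|\FullCurv^E\|_\infty$ for a constant $C(k)$ depending only on $k$. Since only finitely many operations are employed---bounded in terms of $\dim M$ and $\deg p$---there exists a universal $C = C(\dim M, p)$ with $\|\FullCurv^{E' \oplus F'}\|_\infty \leq C \cdot \epsilon'$. Choosing $\epsilon' < \epsilon / C$ then concludes the argument, and the ``in particular'' statement follows by specializing to $N$ a single point, for which $\Ahat(\mathrm{pt}) = 1 \neq 0$.
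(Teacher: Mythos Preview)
Your proposal is correct and follows the same overall strategy as the paper: reduce via the Fubini/multiplicativity computation to showing that $M$ itself has infinite relative $\Ahat$-area, and then invoke the Adams-operation argument from \cite[\S$5\tfrac{3}{8}$]{Gromov:KArea} (or \cite[Lemma~7]{BH20}) to convert a non-trivial relative Chern number into a non-trivial $\Ahat$-weighted Chern character integral while keeping the curvature small. The one structural difference is that the paper first passes to the double $\double M$ and glues the pair into a single bundle $V(E,F)$, noting that $\Ahat(\double M)=0$ by null-bordance so that Gromov's closed-manifold argument applies verbatim, and only then restricts back to a pair on $M$; you instead run the Adams-operation step directly on the relative pair $(E_0,F_0)$ over $M$. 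Your route is equally valid---the rank contribution that the condition $\Ahat(\double M)=0$ is meant to kill vanishes automatically for a difference $\ch(E')-\ch(F')$ of bundles of equal rank---and is in fact closer to how Bär--Hanke formulate their Lemma~7. One small imprecision worth cleaning up: $\psi^k E$ is only a virtual bundle, so the curvature estimate should be phrased for the actual bundles (exterior powers $\Lambda^j E$, tensor products) used to represent it; these satisfy $\|\FullCurv^{\Lambda^j E}\|_\infty \leq j\,\|\FullCurv^E\|_\infty$ and $\|\FullCurv^{E\otimes E'}\|_\infty \leq \|\FullCurv^E\|_\infty + \|\FullCurv^{E'}\|_\infty$, which is what makes the constant depend only on $\dim M$ and not on ranks.
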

\begin{proof}
    Let \(\varepsilon > 0\).
    We start with an admissible pair of bundles \((E,F)\) on \(M\) such that \(\|\FullCurv^{E \oplus F}\|_\infty < \varepsilon\) and such that \eqref{eq:rel-chern-number} is satisfied.
    Arguing as in the proof of \cref{k-area-relative-vs-double}, we may assume without loss of generality that on a collar neighborhood \(U\) of the boundary the bundles \(E,F\) are of product structure and that there exists a parallel unitary bundle isomorphism \(\Phi \colon E|_{U} \to F|_{U}\).
    Consider the bundle \(V(E,F)\) on \(\double M\).
    It follows that
    \[
        \int_{\double M} p(\chernclass_0(V(E,F)), \chernclass_1(V(E,F)), \dotsc) \neq 0.
    \]
    Now using the fact that \(\Ahat(\double M) = 0\) (as the double is null-bordant), the argument given in \cite[\S\(5 \frac{3}{8}\)]{Gromov:KArea} shows that (after altering the initial choice of \(\varepsilon\), passing to a bundle associated to \(V(E,F)\), and restricting the new bundle on \(\double M\) to obtain a new pair of bundles on \(M\)), we can achieve that 
    \[\int_{\double M} \AhatClass(\double M) \wedge \ch(V(E,F)) \neq 0. \]
    But this just means that \eqref{eq:rel-Ahat-number} is satisfied for \((E,F)\) on \(M\) and already shows that \(M\) has infinite \(\Ahat\)-area.
    Finally, pulling back the bundles from \(M\) to \(M \times N\) via the projection \(\proj_M \colon M \times N \to M\) yields the desired result for \(M \times N\) because
    \begin{align*}
        \int_{\double M \times N} &\AhatClass(\double M \times N) \wedge \ch(V(\proj_{M}^\ast E, \proj_{M}^\ast F))\\
         &= \int_{\double M \times N} \proj_{\double M}^\ast\AhatClass(\double M) \wedge \proj_N^\ast\AhatClass(N) \wedge \proj_{\double M}^\ast\ch( V(E,F)) \\
         &= \left(\int_{\double M} \AhatClass(\double M) \wedge \ch(V(E,F)) \right) \cdot \Ahat(N) \neq 0. \qedhere
    \end{align*}
\end{proof}

We can also relate this property to enlargeability.
For an \(n\)-dimensional Riemannian manifold \(N\) with boundary, we say that \((N, \partial N)\) is \emph{compactly area-enlargeable} if for any \(\varepsilon > 0\), there exists a \emph{finite} covering \(\bar{N} \to N\) and an \(\varepsilon\)-area-contracting map \(\bar{M} \to \Sphere^n\) of non-zero degree which is locally constant outside a compact subset of the interior.

\begin{prop} \label{area-enlargeable-Ahat-degree}
    If \(N\) is an even-dimensional compact manifold with \((N, \partial N)\) compactly area-enlargeable and there exists a smooth map \(\Phi \colon (M, \partial M) \to (N, \partial N)\) of non-zero \(\Ahat\)-degree, then \((M, \partial M)\) has infinite relative \(\Ahat\)-area.
\end{prop}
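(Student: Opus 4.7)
Given $\varepsilon > 0$, the plan is to produce an admissible pair $(E,F)$ on $(M,\partial M)$ with $\|\FullCurv^{E\oplus F}\|_\infty < \varepsilon$ and $\int_M \AhatClass(M) \wedge (\ch(E) - \ch(F)) \neq 0$, by transporting Llarull's construction from $\Sphere^n$ up along a finite cover of $N$ and then back along $\Phi$ to $M$. Fixing $\delta > 0$ to be specified later, compact area-enlargeability of $(N,\partial N)$ furnishes a finite covering $p \colon \bar N \to N$ together with a smooth $\delta$-area-contracting map $f \colon \bar N \to \Sphere^n$ of non-zero degree that is locally constant outside some compact $K \subset \interior{\bar N}$.

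Next I would apply the construction from the proof of \cref{lem_long_neck_GL_pair} verbatim, with $f$ in place of $\Phi$ and $\bar N$ in place of $M$, to obtain a GL pair $(\bar E, \bar F) \coloneqq (f^\ast E_0, \Psi^\ast E_0)$ on $\bar N$ supported in $K$, where $E_0 \to \Sphere^n$ is Llarull's twisting bundle and $\Psi$ is an auxiliary map factoring through a $1$-complex that agrees with $f$ off $K$. Llarull's estimate then gives $\|\FullCurv^{\bar E \oplus \bar F}\|_\infty = O(\delta)$. Moreover, $V(\bar E, \bar F) \to \double{\bar N}$ is pulled back from $E_0$ via the map $g \colon \double{\bar N} \to \Sphere^n$ that equals $f$ on $\bar N$ and $\Psi$ on $\bar N^-$, so $\deg(g) = \deg(f) \neq 0$; since $\AhatClass$ of the null-bordant double $\double{\bar N}$ integrates to zero, an Atiyah--Singer computation will yield $\relind(\bar N; \bar E, \bar F) = \deg(f) \cdot \int_{\Sphere^n} \ch_n(E_0) \neq 0$.

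Then I would set $\tilde E \coloneqq p_\ast \bar E$, $\tilde F \coloneqq p_\ast \bar F$ on $N$, and finally $E \coloneqq \Phi^\ast \tilde E$, $F \coloneqq \Phi^\ast \tilde F$ on $M$. Both $p_\ast$ and $\Phi^\ast$ carry parallel unitary boundary isomorphisms to parallel unitary boundary isomorphisms, so $(E, F)$ is admissible on $(M, \partial M)$. Since $p$ is a local isometry the curvature norm is preserved by $p_\ast$, while $\Phi^\ast$ contributes a factor equal to the supremum of the area contraction constant of $\Phi$; taking $\delta$ small enough thus ensures $\|\FullCurv^{E \oplus F}\|_\infty < \varepsilon$. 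Moreover, the canonical identification $\Ct^\infty(N, p_\ast \bar E) \cong \Ct^\infty(\bar N, \bar E)$ intertwines the corresponding twisted spin Dirac operators, yielding $\relind(N; \tilde E, \tilde F) = \relind(\bar N; \bar E, \bar F) \neq 0$.

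It remains to verify $\relind(M; E, F) \neq 0$, equivalently by \cref{relative-index-computation} the non-vanishing of the relative $\Ahat$-Chern number on $(M, \partial M)$. This is where the non-zero $\Ahat$-degree of $\Phi$ enters crucially: the doubled map $\double \Phi \colon \double M \to \double N$ inherits non-zero $\Ahat$-degree, and the standard Gromov--Lawson multiplicativity principle for $\Ahat$-degree (underlying \cite[Definition~2.6]{gromov-lawson:spin-and-scalar-in-presence}) gives $\ind\bigl(\ReducedSpinDirac_{\double M, (\double \Phi)^\ast V(\tilde E, \tilde F)}\bigr) = \deg_{\Ahat}(\double \Phi) \cdot \ind\bigl(\ReducedSpinDirac_{\double N, V(\tilde E, \tilde F)}\bigr) \neq 0$. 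Since $\varepsilon$ was arbitrary, $(M, \partial M)$ then has infinite relative $\Ahat$-area. I expect the main technical obstacle to be cleanly formalizing this $\Ahat$-degree transfer in the presence of boundary; the cleanest route is to pick a regular value of $\double \Phi$ in $\interior{\double N}$ so that its preimage in $\interior{\double M}$ is a closed spin submanifold, reducing to the standard Gromov--Lawson framework for closed manifolds.
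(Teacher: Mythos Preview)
Your construction can be made to work, but the final step as written has a real gap. The identity
\[
\ind\bigl(\ReducedSpinDirac_{\double M,\, (\double \Phi)^\ast V(\tilde E,\tilde F)}\bigr)
=\deg_{\Ahat}(\double \Phi)\cdot \ind\bigl(\ReducedSpinDirac_{\double N,\, V(\tilde E,\tilde F)}\bigr)
\]
is \emph{not} a general ``multiplicativity principle for $\Ahat$-degree''; for an arbitrary twisting bundle $V$ the left side involves all graded pieces of $\AhatClass(\double M)$ against all of $\ch(V)$ and there is no reason for it to factor. What makes it true here is the special fact that $\ch(\tilde E)-\ch(\tilde F)$ is concentrated in top degree $n=\dim N$: indeed $\ch(E_0)$ on $\Sphere^n$ lives in degrees $0$ and $n$, pullback along $f$ and $\Psi$ preserves this, the rank terms cancel in the difference, and the transfer $p_\ast$ preserves degree. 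Only after recording this does the one-line cohomological computation
\[
\int_{\double M}\AhatClass(\double M)\wedge(\double\Phi)^\ast\bigl(\ch(\tilde E)-\ch(\tilde F)\bigr)
=\deg_{\Ahat}(\double\Phi)\cdot\int_{\double N}\bigl(\ch_n(\tilde E)-\ch_n(\tilde F)\bigr)
\]
go through, directly from the definition of $\Ahat$-degree via regular fibers. Note also that framing the last step through $\relind$ and spin Dirac indices tacitly assumes spin structures on $M$ and $N$ that the proposition does not provide; the purely cohomological version above avoids this.

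The paper's proof is much shorter because it isolates exactly this observation and nothing else. It first passes to doubles (\cref{k-area-relative-vs-double}, \cref{rem:Ahat-area-without-boundary}) to reduce to closed $M$ and $N$, then cites the standard fact (Hanke--Schick) that compact area-enlargeability of $N$ directly produces, for each $\varepsilon>0$, a single bundle $E_0\to N$ with $\|\FullCurv^{E_0}\|_\infty<\varepsilon$ and $\ch(E_0)$ concentrated in degrees $0$ and $n$. Taking $E=\Phi^\ast E_0$ and $F$ trivial of the same rank, the whole argument is the line $\int_M\AhatClass(M)\wedge(\ch(E)-\ch(F))=\deg_{\Ahat}(\Phi)\cdot\int_N\ch_n(E_0)\neq 0$. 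Your Llarull-bundle-plus-pushforward route is essentially an explicit reconstruction of that cited bundle $E_0$, so it is not wrong, just longer---and the one ingredient you left implicit (concentration of the Chern character) is precisely the hinge of the short proof.
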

\begin{proof}
    By passing to doubles and \cref{k-area-relative-vs-double,rem:Ahat-area-without-boundary} it suffices to consider the case that \(M\) and \(N\) are closed manifolds.
    Since \(N\) is compactly area-enlargeable, for any \(\varepsilon >0\), there exists a Hermitian bundle \(E_0 \to N\) such that \(\|\FullCurv^{E_0}\|_\infty < \varepsilon/L^2\), where \(L\) is a Lipschitz constant for \(\Phi\), and such that \(\ch(E_0)\) as a cohomology class is concentrated and non-trivial in the degrees \(0\) and \(\dim N\) (see for instance~\cite[313]{HankeSchick:Enlargeable}).
    Let \(E = \Phi^\ast E_0\) and \(F\) to be the trivial bundle of the same rank.
    Then it follows that \(\|\FullCurv^{E \oplus F}\|_\infty < \varepsilon\) and
    \[
      \int_M \AhatClass(M) \wedge (\ch(E) - \ch(F)) = \AhatDeg(\Phi) \int_N \ch(E_0) \neq 0. \qedhere
    \]
\end{proof}

In the following lemma, we spell out a technical consequence of infinite \(\Ahat\)-area that will be used in our applications.

\begin{lem} \label{Rel-KArea-on-spin-mfd}
    Let \((M,g)\) be an even-dimensional compact spin manifold of infinite relative \(\Ahat\)-area and \(U \cong \partial M \times (-1,0] \subseteq M\) be an open collar neighborhood.
    Then for any sufficiently small \(\varepsilon_1, \varepsilon_2 > 0\), there exists a pair of Hermitian vector bundles \((E,F)\) on \(M\) such that
    \begin{myenumi}
        \item there exists a parallel unitary bundle isomorphism \(\mathfrak{t} \colon E|_{U_{\varepsilon_1}} \to F|_{U_{\varepsilon_1}}\), where \(U_{\varepsilon_1}\) corresponds to \(\partial M \times (-1+\varepsilon_1, 0]\) in the tubular neighborhood;\label{item:identified-on-collar}
        \item \(\|\FullCurv^{E \oplus F}\|_\infty < \varepsilon_2\); \label{item:karea-curvature}
        \item \(\ind(\Callias_{\psi, 1}) \neq 0\), where \(\Callias_{\psi, 1}\) is the operator considered in \cref{relative-index-computation} associated to the relative Dirac bundle constructed from \((E,F)\) as in \cref{ex:Gromov-Lawson-Relative}. \label{item:karea-non-vanishing-callias}
    \end{myenumi}
\end{lem}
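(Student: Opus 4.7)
The plan is to use the infinite relative $\Ahat$-area hypothesis to obtain an admissible pair with arbitrarily small curvature, then modify the connections near $\partial M$ to extend the parallel boundary isomorphism to a collar, and finally invoke \cref{relative-index-computation} together with Atiyah--Singer to conclude non-vanishing of the index. By hypothesis, for any $\delta > 0$ I can find an admissible pair $(E_0, F_0)$ with $\|\FullCurv^{E_0 \oplus F_0}\|_\infty < \delta$ and non-vanishing relative $\Ahat$-number $\int_M \AhatClass(M)\wedge(\ch(E_0)-\ch(F_0))$, together with a parallel unitary boundary isomorphism $\phi_0$. To produce $(E,F)$ satisfying (i), I would leave the underlying smooth bundles unchanged and modify only the connections on $U$: fixing the normal-parallel-transport identification $E_0|_U \cong \Phi^*(E_0|_{\partial M})$ (where $\Phi \colon U \to \partial M$ is the retraction), and similarly for $F_0$, I replace the original connections on $U_{\varepsilon_1}$ by the pullback connections $\Phi^* \nabla^{E_0|_{\partial M}}$ and $\Phi^* \nabla^{F_0|_{\partial M}}$, interpolating with the originals in the thin annular region $U \setminus U_{\varepsilon_1}$ via a cutoff $\chi$ in the normal variable. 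Then $\Phi^*\phi_0$ furnishes the required parallel unitary isomorphism on $U_{\varepsilon_1}$, establishing (i).

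For (ii), I would verify that on $U_{\varepsilon_1}$ the modified curvature is the pullback of the boundary curvature of $E_0 \oplus F_0$ and hence bounded by a constant multiple of $\delta$, while outside $U$ nothing changes. In the interpolation annulus the extra term is of the form $d\chi \wedge A$ with $A$ the difference of the two connection $1$-forms under the normal-parallel-transport identification. Since both connections agree on $\partial M$ and the original curvatures satisfy $\|\FullCurv^{E_0 \oplus F_0}\|_\infty < \delta$, integrating the curvature along normal geodesics of length $\leq 1$ gives $\|A\|_\infty \lesssim \delta$, so the additional curvature is of order $\delta/\varepsilon_1$. Starting with $\delta$ sufficiently small (specifically $\delta \ll \varepsilon_1\varepsilon_2$) therefore yields $\|\FullCurv^{E \oplus F}\|_\infty < \varepsilon_2$.

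For (iii), \cref{relative-index-computation} would identify $\ind(\Callias_{\psi, 1})$ with $\relind(M;E,F) = \ind(\ReducedSpinDirac_{\double M, V(E,F)})$. Since the modification leaves the underlying smooth bundles and their boundary data unchanged, $V(E,F) \to \double M$ is canonically isomorphic as a smooth bundle to $V(E_0, F_0)$, so the index is independent of the choice of compatible connection. By Atiyah--Singer it equals $\int_{\double M} \AhatClass(\double M) \wedge \ch(V(E_0,F_0))$, and a standard cut-and-paste on $\double M = M \cup_{\partial M} M^-$ (using the orientation reversal on $M^-$) identifies this integral with $\int_M \AhatClass(M) \wedge (\ch(E_0) - \ch(F_0))$, which is non-zero by construction.

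The main obstacle is the curvature control in the middle step: the interpolation annulus has thickness $\varepsilon_1$ in the normal direction, so the cutoff derivative is of order $1/\varepsilon_1$, which could in principle blow up the curvature. The key point that resolves this is that the difference $A$ of the two connections vanishes on $\partial M$ and is globally bounded by the original curvature norm $\delta$ via holonomy along short normal geodesics, so the potentially harmful product $\delta/\varepsilon_1$ can be made arbitrarily small by starting with $\delta$ small enough; this is only possible because the infinite $\Ahat$-area hypothesis allows $\delta$ to be chosen freely.
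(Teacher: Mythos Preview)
Your approach is correct but takes a genuinely different route from the paper. The paper achieves product structure on the collar by invoking the map \(\psi \colon M \to M\) constructed in the proof of \cref{k-area-relative-vs-double}, which is smoothly homotopic to the identity and collapses the whole collar to \(\partial M\); pulling back the admissible pair by \(\psi\) simultaneously gives product structure on all of \(U\) and changes the curvature only by the fixed factor \(L^2\), where \(L\) is the Lipschitz constant of \(\psi\). You instead keep the bundles fixed and interpolate the connections by hand toward the pullback connection along the collar retraction, which forces you to control the transition term \(d\chi \wedge A\) and introduces the \(\varepsilon_1\)-dependent bound \(\delta/\varepsilon_1\). Both arguments work because the infinite relative \(\Ahat\)-area lets you choose \(\delta\) freely; the paper's packaging via \(\psi\) is cleaner in that the curvature loss is a universal constant independent of \(\varepsilon_1\), while your version makes the mechanism more transparent at the cost of the extra holonomy estimate for \(A\). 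For part (iii) both proofs are essentially the same: since the modification does not change the underlying smooth bundles or the boundary identification, the bundle \(V(E,F)\) on \(\double M\) is unchanged and Atiyah--Singer together with the cut-and-paste over \(\double M = M \cup_{\partial M} M^-\) reduces the index to the original relative \(\Ahat\)-number.
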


\begin{proof}
    Let \((E,F)\) be an admissible pair of bundles given by infinite \(\Ahat\)-area satisfying \eqref{eq:rel-Ahat-number} and \labelcref{item:karea-curvature}.
    Again arguing as in the proof of \cref{k-area-relative-vs-double}, we may assume that \(E,F\) are of product structure on the collar neighborhood and that also \labelcref{item:identified-on-collar} is satisfied.
    We again consider the bundle \(V(E,F) \) on \(\double M\).
    Using \cref{relative-index-computation} and the Atiyah--Singer index theorem, we deduce
    \[
        \ind(\Callias_{\psi, 1}) = \relind(M;E,F) = \ind(\ReducedSpinDirac_{\double M, V(E,F)}) = \int_{\double M} \AhatClass(\double M) \wedge \ch(V(E,F)) \neq 0,
    \]
   where the final non-vanishing statement is due to \eqref{eq:rel-Ahat-number}. 
   Thus \labelcref{item:karea-non-vanishing-callias} is also satisfied and the proof is complete.
\end{proof}

After all these technical preparations, we are now ready to give a proof of \cref{CollarInfiniteKArea} from the introduction.

\begin{proof}[Proof of \cref{CollarInfiniteKArea}]
For all sufficiently small $d>0$ denote by $\mathcal{N}_d$ the open geodesic collar neighborhood of $\partial M$ of width $d$.
Suppose, by contradiction, that $\mathcal{N}_{l^\prime}$ exists for some $l<l^\prime<\pi/(\sqrt\kappa n)$ such that \(\scal_g \geq \kappa n(n-1)\) in $\mathcal{N}_{l^\prime}$.
Fix $\Lambda\in (l,l^\prime)$.
Then $K_\Lambda\coloneqq M\setminus\mathcal{N}_\Lambda$ is a compact manifold with boundary such that \(\scal_g \geq \kappa n(n-1)\) in $M \setminus K_\Lambda$.
For $r\in \left(0,{\sqrt\kappa n}/{2}\right)$, consider the function $Y_r(t)=r\tan(rt)$, with $t$ varying in $\left[0,{\pi}/{(\sqrt\kappa n)}\right)$.
Observe that $Y_r(0)=0$ and
\[
    \frac{ \kappa n^2}{4}+Y_r^2-\abs{Y_r^\prime}=\frac{ \kappa n^2}{4}-r^2>0.
\]
By choosing $r$ close enough to ${\sqrt\kappa n}/{2}$, we can also ensure that
\begin{equation}\label{eq:Y_r(Lambda)}
    Y_r(\Lambda)>\frac{\sqrt\kappa n}{2}\tan\left(\frac{\sqrt\kappa nl}{2}\right).
\end{equation}
Let $\kappa_0\coloneqq\inf_{p\in K_\Lambda}\scal_g(p)>0$.
By \cref{Rel-KArea-on-spin-mfd,relative-index-computation,ex:Gromov-Lawson-Relative}, there exists a GL pair $(E,F)$ and associated relative Dirac bundle \(S \to M\) such that
\begin{myenumi}
    \item \((E,F)\) and thus $S$ have support $K_\Lambda$;\label{item:collar_EF_distance}
    \item \(4\|\Curv^{E \oplus F}\|_\infty<\kappa_0\);\label{item:InteriorEstimate}
    \item \(\frac{n}{n-1}\|\Curv^{E \oplus F}\|_\infty<\frac{ \kappa n^2}{4}-r^2\);\label{item:NeckEstimate}
    \item $\ind(\Callias_{\psi, 1}) = \relind(M;E,F)\neq 0$ for any admissible potential \(\psi\). \label{item:collar_relind}
\end{myenumi}
We now use the function $Y_r$ to construct an admissible potential.
Let $x\colon M \to [0,\Lambda]$ be the distance function from $K_\Lambda$.
Consider the Lipschitz function $\psi\coloneqq Y_r\circ x$ (which is actually smooth in the complement of \(\partial K_{\Lambda}\)).
Then $\psi|_{K_\Lambda}=0$ and $\psi|_{\partial M}=Y_r(\Lambda)$.
By \ref{item:collar_EF_distance} and \eqref{eq:Y_r(Lambda)}, $\psi$ is an admissible potential satisfying
\begin{equation}\label{E:LongNeckPrinciple10}
    \psi|_{\partial M}>\frac{\sqrt\kappa n}{2}\tan\left(\frac{\sqrt\kappa nl}{2}\right).
\end{equation}
Let $\Callias_{\psi,1}$ be the associated Callias operator subject to the boundary condition coming from the sign \(s = 1\) and \(u \in \dom(\Callias_{\psi,1}) = \SobolevH^1_{\RelDiracInv, 1}(M, S)\).
From \eqref{E:LongNeckPrinciple10}, we deduce
\begin{equation*}
    \int_{\partial M} \left(\tfrac{n}{2} \mean_g + s \psi \right) |u |^2\ \vol_{\partial M}\geq 0.
\end{equation*}
Therefore, the estimates~\labelcref{eq:callias-estimate-boundary-condition} in~\cref{thm:callias-estimate-with-boundary} imply
\begin{equation*}
    \int_M | \Callias_\psi u |^2\ \vol_M \geq \int_M \Theta_{\psi,n} | u |^2\ \vol_M,
\end{equation*}
where $\Theta_{\psi,n}$ is the $\Lp^\infty$-function defined by
\[
    \Theta_{\psi,n}\coloneqq\frac{n}{n-1}\left(\frac{\scal_g}{4}-\bigl|\mathcal{R}^{E\oplus F}\bigr|\right)+\psi^2 - |\D \psi|.
\]
By~\ref{item:InteriorEstimate} and since \(\psi\) is constant on \(K_\Lambda\), in the interior $\interior K_\Lambda$ we have 
\[
    \Theta_{\psi,n}=\frac{n}{n-1}\left(\frac{\scal_g}{4}-\bigl|\mathcal{R}^{E\oplus F}\bigr|\right)
    \geq\frac{n}{n-1}\left(\frac{\kappa_0}{4}-\|\Curv^{E \oplus F}\|_\infty\right)>0.
\]
By~\ref{item:NeckEstimate}, \(\scal_g \geq \kappa n(n-1)\) on \(\mathcal{N}_{l'}\) and since $x$ is 1-Lipschitz, in $M \setminus \interior{K_\Lambda}$ we have
\[
    \Theta_{\psi,n}\geq \frac{ \kappa n^2}{4}+\psi^2-|\D\psi|-\frac{n}{n-1}\|\Curv^{E \oplus F}\|_\infty
    \geq\frac{ \kappa n^2}{4}-r^2-\frac{n}{n-1}\|\Curv^{E \oplus F}\|_\infty>0.
\]
Therefore, there exists a constant $c>0$ such that $\|\Callias_\psi u\|\geq c\|u\|$ for all \(u \in \SobolevH^1_{\RelDiracInv, 1}(M,S)\).
It follows that $\ind(\Callias_{\psi,1})=0$, contradicting~\ref{item:collar_relind}.
\end{proof}

\section{Estimates of bands} \label{sec:bands}
In this section, we prove our statements related to \cref{conj:band-width}, that is, estimates of Riemannian bands under lower bounds on scalar- and mean curvature.
We start with reviewing Gromov's notion of a Riemannian band~\cite{Gromov:MetricInequalitiesScalar} and other relevant concepts to formulate our results more conveniently.
\begin{defi}\ 
  \begin{myenumi}
    \item A \emph{band} is a compact manifold \(V\) together with a decomposition \(\partial V = \partial_- V \sqcup \partial_+ V\), where \(\partial_\pm V\) are unions of components.
    \item A map \(\Phi \colon V \to V'\) between bands is called a \emph{band map} if \(\Phi(\partial_\pm V) \subseteq \partial_\pm V'\).
    \item The \emph{width} \(\width(V,g)\) of a Riemannian band \((V,g)\) is the distance between \(\partial_{-}V\) to \(\partial_{+} V\) with respect to \(g\). 
    \item A \emph{width function} for a Riemannian band \((V,g)\) is a \(1\)-Lipschitz function \(x \colon V \to [t_-,t_+]\) for some real numbers \(t_- < t_+\) such that \(\partial_\pm V \subseteq x^{-1}(t_\pm)\).
  \end{myenumi}
\end{defi}

In other words, a width function is a band map \(V \to [t_-, t_+]\) which is \(1\)-Lipschitz.
A width function \(x \colon V \to [t_-, t_+]\) always satisfies \(t_+ - t_- \leq \width(V, g)\). 
We also have the following converse:
\begin{lem}\label{lem:width-function}
  Let \((V,g)\) be a band. 
  Then there exists a width function \(x \colon V \to [t_-, t_+]\) which is smooth near the boundary of \(V\)  such that \(t_+ - t_- = \width(V,g)\).
  Moreover, for every \(t_- < t_+\) satisfying \(t_+ - t_- < \width(V,g)\), there exists a smooth width function \(x \colon V \to [t_-, t_+]\).
\end{lem}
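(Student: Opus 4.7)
Setting $d_\pm(p) \coloneqq \dist_g(p, \partial_\pm V)$, these are $1$-Lipschitz on $V$ and, via the normal exponential map, smooth on some sufficiently small open tubular neighborhood of $\partial_\pm V$; the triangle inequality moreover yields $d_-(p) + d_+(p) \geq \width(V,g)$ everywhere on $V$. For the first claim (with for instance $t_- = 0$ and $t_+ = \width(V,g)$), I would fix $0 < \epsilon < t_+ - t_-$ so small that $d_\pm$ is smooth on $\{d_\pm < \epsilon\}$ and define
\begin{equation*}
x(p) \coloneqq
\begin{cases}
t_+ - d_+(p) & \text{if } d_+(p) \leq \epsilon,\\
\min\bigl(t_- + d_-(p),\ t_+ - \epsilon\bigr) & \text{if } d_+(p) \geq \epsilon.
\end{cases}
\end{equation*}
The two formulas agree on $\{d_+ = \epsilon\}$ because there $d_-(p) \geq \width(V,g) - \epsilon = t_+ - t_- - \epsilon$, so the minimum equals $t_+ - \epsilon$. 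Each piece is $1$-Lipschitz on its own, and for points $p,q$ in different pieces one obtains $|x(p) - x(q)| \leq \dist_g(p, q)$ by approximating $\dist_g(p, q)$ by a path, picking a crossing point $r \in \{d_+ = \epsilon\}$ on it, and using that both formulas take the value $t_+ - \epsilon$ at $r$. Finally $x|_{\partial_+V} = t_+$ directly, $x|_{\partial_-V} = \min(t_-, t_+ - \epsilon) = t_-$ thanks to $\epsilon < t_+ - t_-$, and $x$ is smooth on the open neighborhoods $\{d_+ < \epsilon\}$ and $\{d_- < t_+ - t_- - \epsilon\}$ of $\partial_\pm V$, where it coincides with $t_+ - d_+$ and $t_- + d_-$ respectively.

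For the second claim (strict inequality), I would apply the same construction but with $t_- + d_-(p)$ replaced by $t_- + \lambda d_-(p)$ in the interior piece, where $\lambda \coloneqq (t_+ - t_-)/\width(V,g) < 1$. A short computation (using only $\lambda \leq 1$) confirms that the two formulas still agree on $\{d_+ = \epsilon\}$, and the strict inequality $\lambda < 1$ additionally forces the interior piece to be locally constant and equal to $t_+ - \epsilon$ on an open neighborhood of $\{d_+ = \epsilon\}$ inside $\{d_+ \geq \epsilon\}$. Consequently the resulting $x$ is already smooth on a full neighborhood of $\partial V \cup \{d_+ = \epsilon\}$, and I would then apply a standard convolution-in-charts smoothing combined with a partition of unity to the remaining non-smooth interior subset only. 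The Lipschitz constant increases by an arbitrarily small $\eta > 0$ which can be absorbed into the slack $1 - \lambda > 0$, while the boundary values remain untouched because the smoothing is localized away from $\partial V$.

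The technical crux of both claims is verifying the $1$-Lipschitz property across the patching hypersurface $\{d_+ = \epsilon\}$, which reduces to the elementary path-crossing argument together with the fact that the two formulas coincide there. For the second claim the additional delicacy is exact preservation of the boundary values under global smoothing; this is addressed by localizing the smoothing to an open set bounded away from $\partial V$, which is made possible precisely by the buffer zone on which $x$ is already locally constant thanks to $\lambda < 1$.
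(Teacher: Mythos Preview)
Your construction for the first claim is correct and essentially the same as the paper's, which also patches $d_-$ and $w-d_+$ together with a constant cap (the paper does it symmetrically at level $w/2$ rather than asymmetrically near $\partial_+V$, but the idea is identical).

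For the second claim, however, there is a genuine gap. Your assertion that ``the resulting $x$ is already smooth on a full neighborhood of $\partial V \cup \{d_+ = \epsilon\}$'' is false. On $\{d_+<\epsilon\}$ the function equals $t_+-d_+$, which has gradient $-\nabla d_+$ of norm~$1$, while on the other side of $\{d_+=\epsilon\}$ you have just argued that the second piece is the constant $t_+-\epsilon$, with zero gradient. These do not match in $C^1$, so $x$ is not differentiable along $\{d_+=\epsilon\}$. Consequently the smoothing you propose must include a neighborhood of $\{d_+=\epsilon\}$, and there the Lipschitz constant of the original function is exactly~$1$ (coming from the $t_+-d_+$ side). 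The convolution-in-charts procedure therefore produces a $(1+\eta)$-Lipschitz function on that neighborhood, and this $\eta$ cannot be ``absorbed into the slack $1-\lambda$'': the slack lives only in the second piece, while the excess arises precisely where the first piece meets it.

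The paper sidesteps this by not building the $\lambda$ into the construction at all. It first takes the width function $x\colon V\to[0,w]$ from the first claim, smooths it in the interior to a $(1+\varepsilon)$-Lipschitz $\tilde x$ still equal to $x$ near $\partial V$, and then rescales the \emph{entire} function by $(1+\varepsilon)^{-1}$, where $\varepsilon$ is chosen so that $(1+\varepsilon)^{-1}w=t_+-t_-$. The rescaling simultaneously fixes the Lipschitz constant and the target interval. Your approach can be repaired along the same lines (or by also shrinking the first piece, say using $t_+-\mu d_+$ with $\mu<1$ and adjusting the cap accordingly), but as written the argument does not close.
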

\begin{proof}
  Let \(w = \width(V,g)\).
  Then we obtain the desired Lipschitz width function by setting
  \[
    x \colon V \to [0, w], \quad x(p) \coloneqq
    \begin{cases}
      d(p, \partial_- V) & \text{if \(d(p, \partial_- V) \leq w/2\)},\\
      w - d(p, \partial_+ V) & \text{if \(d(p, \partial_+ V) \leq w/2\)},\\
      w/2 & \text{otherwise.}
    \end{cases}
  \]
  Note that this \(x\) is already smooth in a neighborhood of \(\partial V\).
  Moreover, if \(t_+ - t_- < w\), we can find \(\varepsilon > 0\) such that \((1+\varepsilon)^{-1}w = t_+ - t_-\). 
  We can then approximate \(x\) by a smooth function \(\tilde{x} \colon V \to [0, w]\) which agrees with \(x\) near \(\partial V\) and is \((1+\varepsilon)\)-Lipschitz.
  Then \((1+\varepsilon)^{-1}\tilde{x} \colon V \to [0, t_+ - t_-]\) is a smooth width function.
  Translating to the interval \([t_-, t_+]\) yields the desired result.
\end{proof}

The following notion is an adaption of the ideas from~\cref{S:K-area} to the situation of bands.
\begin{defi}
  A band \(V\) is said to have \emph{infinite vertical \(\Ahat\)-area}, if for every \(\varepsilon > 0\), there exists a Hermitian vector bundle \(E \to V\) such that \(\| \FullCurv^E \|_\infty < \varepsilon\) and such that we have 
  \begin{equation}
    \int_{\partial_- V} \AhatClass(\partial_- V) \wedge \ch(E|_{\partial_- V}) \neq 0.  \label{eq:vertical-ahat-condition}
  \end{equation}
\end{defi}

\begin{ex}
  A simple example of an infinite vertical \(\Ahat\)-area is an \emph{\(\Ahat\)-band}, that is a band, such that \(\Ahat(\partial_- V) \neq 0\).
  Another is a band of the form \(V = M \times [-1,1]\), where \(M\) has infinite \(\Ahat\)-area.
\end{ex}
\begin{ex}
  If \(N\) is a closed manifold that is compactly area-enlargeable and there exists a band map \(V \to N \times [0,1]\) of non-zero \(\Ahat\)-degree, then \(V\) has infinite vertical \(\Ahat\)-area (compare \cref{area-enlargeable-Ahat-degree}).
  In particular, this includes the classes of \emph{overtorical bands} introduced by Gromov in \cite{Gromov:MetricInequalitiesScalar} and the generalization of \emph{\(\Ahat\)-overtorical bands} studied in \cite{ZeidlerWidthLargeness}.
\end{ex}

\begin{thm} \label{band-estimate}
  Let \((V,g)\) be a spin band of infinite vertical \(\Ahat\)-area.
  Suppose that \(\scal_g \geq n(n-1)\) and let \(-\pi/n < t_- < t_+ < \pi/n\) such that the mean curvature of \(\partial V\) satisfies 
  \begin{equation} 
    \mean_g|_{\partial_{\pm} V} \geq \mp \tan\left(\frac{n t_{\pm}}{2}\right). \label{eq:band-mean-curvature-assumption}
  \end{equation}
  Then \(\width(V,g) \leq t_+ - t_-\).
\end{thm}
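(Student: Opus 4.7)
The plan is to argue by contradiction: suppose $\width(V,g) > t_+ - t_-$, and construct a Callias operator which by \cref{ex:band-index-conputation} has non-zero index, yet by the spectral estimate of \cref{thm:callias-estimate-with-boundary} is invertible. The relative Dirac bundle is the one from \cref{ex:SimpleBand}, twisted by a bundle $E \to V$ furnished by the infinite vertical $\Ahat$-area hypothesis: for any prescribed $\varepsilon>0$ we obtain $E$ with $\|\FullCurv^E\|_\infty < \varepsilon$ and $\int_{\partial_- V}\AhatClass(\partial_- V)\wedge \ch(E|_{\partial_- V}) \neq 0$. Choosing signs $s|_{\partial_\pm V} = \pm 1$, \cref{ex:band-index-conputation} then yields $\ind(\Callias_{\psi, s}) \neq 0$ for any admissible potential, and since $V$ is compact with globally defined involution, admissibility is automatic.

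For the potential we refine \cref{lem:width-function}: the strict inequality $\width(V,g) > t_+ - t_-$ means that, picking $\lambda = \width(V,g)/(t_+ - t_-) > 1$ and rescaling the natural distance-based width function before smoothing, one produces a smooth band map $x \colon V \to [t_-, t_+]$ with $x|_{\partial_\pm V} = t_\pm$ and a uniform Lipschitz bound $|\D x| \leq \mu$ for some $\mu < 1$. Set $f(t) = \tfrac{n}{2}\tan(nt/2)$, note the warped-product identity $f'(t) = \tfrac{n^2}{4} + f(t)^2$, and define $\psi \coloneqq f\circ x$. A direct computation gives
\[
\psi^2 - |\D\psi| \;\geq\; f(x)^2 - \mu f'(x) \;=\; (1-\mu)\Bigl(\tfrac{n^2}{4} + f(x)^2\Bigr) - \tfrac{n^2}{4},
\]
so after adding the term $\tfrac{n}{n-1}\scal_g/4 \geq n^2/4$ coming from the Bochner--Lichnerowicz formula~\eqref{E:Band-Lichnerowicz} of \cref{ex:SimpleBand}, the interior integrand in estimate~\eqref{eq:callias-estimate-boundary-condition} is bounded below by $(1-\mu)n^2/4 - \tfrac{n}{n-1}\|\FullCurv^E\|_\infty$. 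Choosing $\varepsilon$ small enough that $\tfrac{n}{n-1}\varepsilon < (1-\mu)n^2/4$ makes this strictly and uniformly positive.

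For the boundary contribution $\bigl(\tfrac{n}{2}\mean_g + s\psi\bigr)|\tau(u)|^2$, observe that $\psi|_{\partial_\pm V} = \tfrac{n}{2}\tan(nt_\pm/2)$ since $x = t_\pm$ there, so on $\partial_- V$ the integrand equals $\tfrac{n}{2}\bigl(\mean_g - \tan(nt_-/2)\bigr) \geq 0$ and on $\partial_+ V$ it equals $\tfrac{n}{2}\bigl(\mean_g + \tan(nt_+/2)\bigr) \geq 0$, both by hypothesis~\eqref{eq:band-mean-curvature-assumption}. Combining the interior and boundary contributions gives $\|\Callias_{\psi, s} u\|^2 \geq c\|u\|^2$ for some $c > 0$ uniformly on the domain $\SobolevH^1_{\RelDiracInv, s}(V,S)$. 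Since $\Callias_{\psi, s}$ is self-adjoint by \cref{thm:selfadjoint-and-fredholm}, it is therefore invertible, contradicting the non-vanishing of its index and proving $\width(V,g) \leq t_+ - t_-$.

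The main technical point to get right is converting the strict width inequality into a uniform Lipschitz bound $\mu < 1$ on a smooth width function, since this gap is precisely what creates the spectral margin needed to absorb the unavoidable $\Curv^E$ term. The algebraic identity $f' - f^2 = n^2/4$ is essentially what forces the optimal constant in the theorem, and the compatibility between the sign convention $s|_{\partial_\pm V} = \pm 1$ required by \cref{ex:band-index-conputation} and the mean-curvature hypothesis~\eqref{eq:band-mean-curvature-assumption} is a sign-tracking check rather than a substantive obstacle.
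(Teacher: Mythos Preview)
Your proof is correct and follows essentially the same strategy as the paper's: argue by contradiction, use the infinite vertical $\Ahat$-area to find a twisting bundle with small curvature and non-trivial boundary index, and show via \cref{thm:callias-estimate-with-boundary} that the Callias operator is invertible.

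The one genuine (but minor) difference is where you place the slack. The paper keeps a $1$-Lipschitz width function $x\colon V\to[-d,d]$ (with $2d$ strictly between $t_+-t_-$ and $\width(V,g)$) and instead scales the \emph{potential}, setting $f_{\lambda_0}(s)=\lambda_0\,\tfrac{n}{2}\tan\bigl(\tfrac{n}{2}(t_0+\lambda_0 s)\bigr)$ with $\lambda_0<1$; this yields exactly $f_{\lambda_0}^2-f_{\lambda_0}'=-\lambda_0^2\,n^2/4$ and hence the margin $(1-\lambda_0^2)n^2/4$. You instead keep the exact tangent $f(t)=\tfrac{n}{2}\tan(nt/2)$ and push the slack into the \emph{width function}, arranging $|\D x|\le\mu<1$, which gives the margin $(1-\mu)n^2/4$. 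Your version has the mild advantage that the boundary values $\psi|_{\partial_\pm V}=f(t_\pm)$ match the hypothesis \labelcref{eq:band-mean-curvature-assumption} on the nose, whereas the paper needs the extra check \labelcref{eq:slighly-tangent}. Conversely, the paper's version fits directly into the stated \cref{lem:width-function} without the small refinement you need to extract a strict Lipschitz bound $\mu<1$. Either way, the two arguments are reparametrizations of one another.

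One cosmetic point: the bound you write as $\tfrac{n}{n-1}\|\FullCurv^E\|_\infty$ should strictly speaking involve the Weitzenb\"ock endomorphism $\Curv^E$ rather than the curvature tensor $\FullCurv^E$; since $\Curv^E$ depends linearly on $\FullCurv^E$ with a dimensional constant, this does not affect the argument.
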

\begin{proof}
  Assume by contradiction that \(\width(V,g) > t_+ - t_-\).
  Let \(t_0 \coloneqq (t_+ + t_-)/2\) be the midpoint between \(t_-\) and \(t_+\).
  Then we can find \(d > 0\) such that \(\width(V,g) > 2d > t_+ - t_-\) and \(-\pi/n < t_0 - d < t_0 + d < \pi/n\).
  \cref{lem:width-function} implies that there exists there exists a smooth width function \(x \colon V \to [-d, d]\).
  
  Now for \(0 < \lambda \leq 1\), we set
  \[
    f_\lambda \colon [-d,d] \to \R, \quad f_{\lambda}(s) = \lambda \tfrac{n}{2} \tan \left(\tfrac{n}{2}(t_0 + \lambda s ) \right).
  \]
  We now fix a \(\lambda_0 < 1\) such that
  \begin{equation}
    \mp \tfrac{n}{2} \tan\left(\frac{n t_{\pm}}{2}\right) \geq \mp f_{\lambda_0}(\pm d). \label{eq:slighly-tangent}
  \end{equation}
  This is possible because the tangent function is increasing and \(t_0 - d < t_- < t_+ < t_0 + d\) by our choice of \(d\).
  We now choose a Hermitian bundle \(E \to V\) satisfying \labelcref{eq:vertical-ahat-condition} and such that the corresponding Weitzenböck curvature endomorphism satisfies \(\Curv^E \geq -\delta\) for some \(\delta < \tfrac{n^2}{4}(1-\lambda_0^2)\). 
 Now we consider the Callias operator \(\Callias_{\psi, s}\) associated to the relative Dirac bundle from \cref{ex:SimpleBand} (using the twisting bundle \(E\)) and with potential \(\psi \coloneqq f_{\lambda_0} \circ x\) and subject to the boundary conditions coming from the choice of signs \(s(\partial_\pm V) = \pm 1\).
 Now \labelcref{eq:vertical-ahat-condition} and \cref{ex:band-index-conputation} imply that \(\ind(\Callias_{\psi, s}) \neq 0\).
 On the other hand, we have
 \[
  \psi^2 - |\D \psi| = f_{\lambda_0}(x)^2 - f_{\lambda_0}'(x) |\D x| \geq f_{\lambda_0}(x)^2 - f_{\lambda_0}'(x) = -\lambda^2_0 \frac{n^2}{4}.
 \]
 Thus \labelcref{eq:callias-estimate-boundary-condition} together with \(\scal_g \geq n(n-1)\) implies that each \(u\) in the domain of \(\Callias_{\psi, s}\) satisfies
 \begin{align*}
   \int_V | \Callias_{\psi, s} u |^2\ \vol_V \geq &\left(\tfrac{n^2}{4}(1 - \lambda_0^2) - \delta \right) \int_V | u |^2\ \vol_M \\
   &+ \int_{\partial_- V} \left(\tfrac{n}{2} \mean_g - f_{\lambda_0}(-d) \right) |u |^2\ \vol_{\partial_- V}\\
   &+ \int_{\partial_- V} \left(\tfrac{n}{2} \mean_g + f_{\lambda_0}(d) \right) |u |^2\ \vol_{\partial_+ V}.
 \end{align*}
 Since the terms \(\tfrac{n}{2} \mean_g|_{\partial_{\pm} V} \pm f_{\lambda_0}(\pm d)\) at the boundary are non-negative by \eqref{eq:band-mean-curvature-assumption} and \eqref{eq:slighly-tangent}, this implies that 
 \[
  \int_V | \Callias_{\psi, s} u |^2\ \vol_V \geq C \int_{V} |u|^2\ \vol_V,
 \]
 where \(C = \tfrac{n^2}{4}(1 - \lambda_0^2) - \delta > 0\) by the choice of \(\delta >0\).
 This shows that the operator \(\Callias_{\psi, s}\) is invertible, a contradiction to \(\ind(\Callias_{\psi, s}) \neq 0\).
\end{proof}

\begin{cor} \label{cor:symmetric-band-estimate}
  Let \((V,g)\) be a spin band of infinite vertical \(\Ahat\)-area.
  Suppose that \(\scal_g \geq n(n-1)\) and that the mean curvature of \(\partial V\) satisfies 
  \[ \mean_g \geq -\tan\left(\frac{n l}{4}\right)\quad \text{for some \(0 < l < \frac{2\pi}{n}\).} \]
  Then \(\width(V,g) \leq l\).
\end{cor}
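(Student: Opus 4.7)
The plan is to deduce the corollary as an immediate specialization of \cref{band-estimate} by choosing the interval $[t_-, t_+]$ symmetrically around the origin. Concretely, I set $t_\pm \coloneqq \pm l/2$. The hypothesis $0 < l < 2\pi/n$ guarantees
\[
-\frac{\pi}{n} < -\frac{l}{2} = t_- < t_+ = \frac{l}{2} < \frac{\pi}{n},
\]
so the interval satisfies the range constraint required by \cref{band-estimate}.

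Next I would check that the single mean curvature bound in the corollary implies the two-sided bound appearing in \labelcref{eq:band-mean-curvature-assumption}. Using the oddness of $\tan$ and the choice $t_\pm = \pm l/2$,
\[
\mp\tan\!\left(\frac{n t_\pm}{2}\right) = \mp \tan\!\left(\pm\frac{n l}{4}\right) = -\tan\!\left(\frac{n l}{4}\right)
\]
for both choices of sign. Hence the assumption $\mean_g \geq -\tan(nl/4)$ on all of $\partial V$ is precisely the condition $\mean_g|_{\partial_\pm V} \geq \mp\tan(nt_\pm/2)$.

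With these verifications, \cref{band-estimate} applies and yields $\width(V,g) \leq t_+ - t_- = l$, which is the claim.

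There is essentially no obstacle here; the corollary is a bookkeeping consequence of the more general theorem, and the only point worth highlighting is the symmetry $\tan(nl/4) = -\tan(-nl/4)$ that makes the symmetric interval optimal for a uniform lower bound on the mean curvature over the whole boundary.
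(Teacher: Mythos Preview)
Your proof is correct and follows exactly the same approach as the paper: set \(t_\pm \coloneqq \pm l/2\) and apply \cref{band-estimate}. The paper's proof is a single sentence stating this substitution, while you have additionally spelled out the verification that the range constraint and the mean curvature bound \labelcref{eq:band-mean-curvature-assumption} are satisfied.
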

\begin{proof}
  This is a consequence of \cref{band-estimate} by setting \(t_\pm \coloneqq \pm l/2\).
\end{proof}

\begin{cor} \label{cor:half-band-estimate}
  Let \((V,g)\) be a spin band of infinite vertical \(\Ahat\)-area.
  Suppose that \(\scal_g \geq n(n-1)\) and that the mean curvature of \(\partial V\) satisfies 
  \[ \mean_g|_{\partial_- V} \geq 0 \quad \text{and} \quad \mean_g|_{\partial_+ V} \geq  -\tan\left(\frac{n l}{2}\right)\quad \text{for some \(0 < l < \frac{\pi}{n}\).} \]
  Then \(\width(V,g) \leq l\).
\end{cor}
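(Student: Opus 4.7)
The plan is to deduce this corollary as an immediate specialization of the more general band estimate (\cref{band-estimate}) by choosing the parameters $t_\pm$ so that the assumed one-sided mean curvature bounds match the form $\mp \tan(n t_\pm / 2)$ required by that theorem.

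Concretely, I would set $t_- = 0$ and $t_+ = l$. Since $0 < l < \pi/n$ by hypothesis, this choice satisfies the admissibility condition $-\pi/n < t_- < t_+ < \pi/n$ of \cref{band-estimate}. With these values, the required mean curvature bound $\mean_g|_{\partial_- V} \geq -\tan(n t_- / 2) = -\tan(0) = 0$ reduces precisely to the assumption that $\partial_- V$ is mean convex, while the required bound $\mean_g|_{\partial_+ V} \geq -\tan(n t_+ / 2) = -\tan(nl/2)$ is identical to the hypothesis on $\partial_+ V$. Thus \cref{band-estimate} applies directly and yields
\[
  \width(V, g) \leq t_+ - t_- = l,
\]
which is the desired conclusion.

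There is really no obstacle here: the entire substance of the proof lies in \cref{band-estimate} itself (with its underlying spinorial analysis based on \cref{thm:callias-estimate-with-boundary} and the index computation in \cref{ex:band-index-conputation}), and the corollary is merely an instantiation at a specific asymmetric choice of endpoints. The only tiny point worth spelling out explicitly is the translation between the hypothesis $\mean_g|_{\partial_- V} \geq 0$ and the boundary inequality for $t_- = 0$, which is immediate from $\tan(0) = 0$.
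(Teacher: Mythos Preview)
Your proposal is correct and matches the paper's own proof essentially verbatim: the paper simply states that the corollary follows from \cref{band-estimate} by setting \(t_- \coloneqq 0\) and \(t_+ \coloneqq l\), which is exactly the specialization you carry out.
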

\begin{proof}
  This is also a consequence of \cref{band-estimate} by setting \(t_- \coloneqq 0\) and \(t_+ \coloneqq l\).
\end{proof}

\begin{cor} \label{cor:total-band-estimate}
  Let \((V,g)\) be a spin band of infinite vertical \(\Ahat\)-area.
  Suppose that \(\scal_g \geq n(n-1)\).
  Then we always have \(\width(V, g) < \frac{2\pi}{n}\).
  Moreover, if \(\partial_- V\) is mean-convex, then \(\width(V, g) < \frac{\pi}{n}\).
\end{cor}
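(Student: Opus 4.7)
The plan is to deduce both strict bounds directly from \cref{cor:symmetric-band-estimate,cor:half-band-estimate} by exploiting the compactness of the boundary, which forces the mean curvature to be uniformly bounded below even though no explicit lower bound is postulated.

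More precisely, since \(V\) is a compact manifold, the mean curvature \(\mean_g\) is a continuous function on the compact set \(\partial V\) and hence attains a finite infimum. Thus there exists a constant \(C \geq 0\) such that \(\mean_g \geq -C\) everywhere on \(\partial V\). Because \(\tan(n l / 4) \to +\infty\) as \(l \to 2\pi/n\), we can choose some \(l \in (0, 2\pi/n)\) with \(\tan(nl/4) \geq C\), so that
\[
  \mean_g \geq -C \geq -\tan\!\left(\frac{n l}{4}\right).
\]
Applying \cref{cor:symmetric-band-estimate} to this \(l\) then yields \(\width(V,g) \leq l < 2\pi/n\), which establishes the first assertion.

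For the second assertion we argue analogously using \cref{cor:half-band-estimate}. If \(\partial_- V\) is mean-convex, then \(\mean_g|_{\partial_- V} \geq 0\) automatically, while compactness of \(\partial_+ V\) again produces a constant \(C \geq 0\) with \(\mean_g|_{\partial_+ V} \geq -C\). Since \(\tan(nl/2) \to +\infty\) as \(l \to \pi/n\), we can pick some \(l \in (0, \pi/n)\) with \(\tan(nl/2) \geq C\), giving \(\mean_g|_{\partial_+ V} \geq -\tan(nl/2)\). Then \cref{cor:half-band-estimate} delivers \(\width(V,g) \leq l < \pi/n\), as required.

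I do not anticipate any genuine obstacle here: the entire content of the corollary is the qualitative observation that the bounds \(l\mapsto -\tan(nl/4)\) and \(l\mapsto -\tan(nl/2)\) diverge to \(-\infty\) at the conjectured critical widths, so that the \emph{a priori} lower bound on \(\mean_g\) available on any compact band can always be absorbed by choosing \(l\) slightly smaller than the critical value. The only mild care needed is to note that the mean curvature is continuous on \(\partial V\) (which it is, since \(g\) is a smooth Riemannian metric), ensuring the constant \(C\) is finite.
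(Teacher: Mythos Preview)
Your proof is correct and follows exactly the same approach as the paper: both arguments use compactness of \(\partial V\) to obtain a finite lower bound on the mean curvature, then exploit that \(-\tan(\vartheta) \to -\infty\) as \(\vartheta \nearrow \pi/2\) to choose an \(l\) strictly below the critical width for which the hypotheses of \cref{cor:symmetric-band-estimate} (respectively \cref{cor:half-band-estimate}) are satisfied. The paper's proof is simply a more terse version of what you wrote.
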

\begin{proof}
  The first statement follows from \cref{cor:symmetric-band-estimate} and the second statement is a consequence of \cref{cor:half-band-estimate}, in both cases because \(-\tan(\vartheta) \to -\infty\) as \(\vartheta \nearrow \pi/2\) but \(\inf_{p \in \partial V} \mean_g > -\infty\) by compactness. 
\end{proof}

\section{The general warped product rigidity theorem}\label{sec:WarpedProductRigidity}
In this section, we establish a general rigidity theorem which allows to compare metrics on bands with certain warped products.

\begin{setup}\label{setup:WarpingFunctions}
  We consider the following setup.
  Let \(n \in \N\), \(I \subseteq \R\) an interval, \(V\) a band, \(x \colon V \to I\) be a continuous function.
  Furthermore, let \(\varphi \colon I \to (0,\infty)\) and \(\kappa, \nu \colon V \to \R\) be smooth functions.
   We define the auxiliary functions \(h, f \colon I \to \R\) by
   \[
     h(t) \coloneqq \frac{\varphi'(t)}{\varphi(t)}, \quad f(t) \coloneqq - \frac{n}{2} h(t)
   \]
   and suppose that the following two conditions are satisfied.
  \begin{itemize}
    \item  The function \(\varphi\) is strictly logarithmically concave, that is, \(h'(t) < 0\) for all \(t \in I\).
    \item For all \(p \in V\) the following inequality holds.
    \begin{equation}
    \frac{n^2 \kappa(p)}{4}  + \frac{n \nu(p)}{n-1} + f(x(p))^2 - f'(x(p)) \geq 0. \label{eq:warping-balance}
  \end{equation}
\end{itemize}
\end{setup}

\begin{rem}\label{rem:warping-balance-motivation}
  The geometric motivation for \eqref{eq:warping-balance} is the following:
  Suppose that we have a warped product metric \(g = \varphi^2 g_M + \D{x} \otimes \D{x}\) on an \(n\)-dimensional band of the form \(V = M \times [t_-,t_+]\).
  Then, if we choose \(\kappa \colon V \to \R\) and \(\nu \colon V \to \R\) such that \(n(n-1) \kappa = \scal_g\) and \(- \nu = \scal_{\varphi^2 g_M}/4 = \frac{1}{4 \varphi^2} \scal_{g_M}\), we precisely obtain the relation
  \[
    \frac{n^2 \kappa(y, t)}{4}  + \frac{n \nu(y, t)}{n-1} + f(t)^2 - f'(t) = 0 \quad \text{for all \((y,t) \in M \times [t_-, t_+]\),}
  \]
  where \(f\) is defined as in \cref{setup:WarpingFunctions} above.
  In this sense, \eqref{eq:warping-balance} abstractly models the scalar curvature equation for warped products.
\end{rem}

\begin{thm} \label{thm:general-warped-product-rigidity}
  Let \((V,g)\) be an \(n\)-dimensional Riemannian spin band and \(E \to V\) be a Hermitian vector bundle endowed with a metric connection.
  We suppose that we have chosen smooth functions \(x \colon V \to I\), \(\varphi, h, f \colon I \to \R\) and \(\kappa, \nu \colon V \to \R\) satisfying \cref{setup:WarpingFunctions}.
  Furthermore, we assume that the following conditions hold:
  \begin{myenumi}
    \item \(\scal_g \geq n(n-1) \kappa\) and \(\Curv^E \geq \nu\); \label{item:curv-bounds}
    \item \(x \colon V \to [t_-, t_+]\) is a width function for some \(t_-, t_+ \in I\); \label{item:width-function}
    \item \(\mean_g|_{\partial_\pm V} \geq \pm h(t_\pm)\), where \(\mean_g\) denotes the mean curvature of \(\partial V\); \label{item:mean-bound}
    \item \(\ker(\Callias_{\psi, s}) \neq 0\), where \(\Callias_{\psi, s}\) is the Callias operator associated to the relative Dirac bundle from \cref{ex:SimpleBand} with potential \(\psi \coloneqq f \circ x\) and subject to the boundary conditions coming from the choice of signs \(s(\partial_\pm V) = \pm 1\). \label{item:callias-kernel}
  \end{myenumi}
  Then \(\scal_g = n(n-1) \kappa\) and \((V,g)\) is isometric to the warped product 
  \[(M \times [t_-, t_+], \varphi^2 g_M + \D{x} \otimes \D{x}),\] 
  where \(M = x^{-1}(t_0)\) for an arbitrary fixed \(t_0 \in [t_-,t_+]\) and \(g_M \coloneqq \varphi(t_0)^{-2} g|_{M}\) on \(M\).
  Furthermore, for any \(u \in \ker(\Callias_{\psi, s})\) and \(t \in [t_-,t_+]\), the restriction \(u|_{M \times \{t\}}\) is parallel with respect to the connection 
  \[ \widetilde{\nabla}_\xi \coloneqq \nabla_\xi + \tfrac{1}{2}\clm(\nabla_\xi \D x) \clm(\D  x), \qquad \xi \in \T (M \times \{t\}).\]
\end{thm}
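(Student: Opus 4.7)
The plan is to extract a nontrivial element $u \in \ker(\Callias_{\psi,s})$ from hypothesis (iv) and to apply the spectral estimate of \cref{thm:callias-estimate-with-boundary} to force the equality case. Substituting into \eqref{eq:callias-estimate-boundary-condition} and using hypothesis (i) via $\Curv = \scal_g/4 + \Curv^E \geq n(n-1)\kappa/4 + \nu$, the Lipschitz estimate $\psi^2 - |\D \psi| = f(x)^2 - f'(x)|\D x| \geq f(x)^2 - f'(x)$ (which holds since $f' > 0$ and $|\D x| \leq 1$), and the identity $\psi|_{\partial_\pm V} = \mp \tfrac{n}{2} h(t_\pm)$ together with (iii) to make the boundary integrand $\tfrac{n}{2}\mean_g + s\psi$ nonnegative, one obtains
\[
  0 \;=\; \int_V |\Callias_\psi u|^2\ \vol_V \;\geq\; \int_V \Bigl[\tfrac{n^2\kappa}{4} + \tfrac{n\nu}{n-1} + f(x)^2 - f'(x)\Bigr] |u|^2\ \vol_V,
\]
which is nonnegative by \eqref{eq:warping-balance}. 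Hence every inequality is saturated, yielding: the twistor equation $\Penrose u = 0$; the eigenvalue relation $\clm(\D \psi)\RelDiracInv u = -|\D \psi| u$; the curvature identities $\scal_g = n(n-1)\kappa$ and $\Curv^E u = \nu u$ pointwise on $\{u \neq 0\}$; the boundary equalities $\mean_g|_{\partial_\pm V} = \pm h(t_\pm)$ on $\supp(u) \cap \partial V$; and $|\D x| = 1$ almost everywhere on $\{u \neq 0\}$.

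Next I would apply \cref{rem:modified_section} to $w \coloneqq \varphi(x)^{-1/2} u$. Since $f' > 0$, the eigenvalue condition combined with $|\D x| = 1$ on $\supp(u)$ is equivalent to $\clm(\D x) u = \RelDiracInv u$ almost everywhere on $\supp(u)$, so \eqref{eq:modified_section_derivative_of_norm_vanishes} (with $K = \emptyset$) gives $\D |w|^2 = 0$ almost everywhere on $V$. Elliptic regularity makes $u$ (and hence $w$) smooth in the interior, and unique continuation for the first-order Dirac-type operator $\Callias_\psi$ prevents $u$ from vanishing on any nonempty open set. Together with $|w|$ being locally constant, this forces $|w|$ to be a strictly positive constant throughout $V$. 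Consequently $u$ is nowhere zero, $|\D x| \equiv 1$ on $V$, and the pointwise identity $\clm(\D x) u = \RelDiracInv u$ holds globally.

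The core geometric step is to deduce the warped product structure. Since $|\nabla x| \equiv 1$, the standard identity $\nabla_{\nabla x} \nabla x = 0$ holds, so the unit geodesic flow of $\partial_x \coloneqq \nabla x$ provides a diffeomorphism $M \times [t_-, t_+] \to V$ (with $M \coloneqq x^{-1}(t_0)$) under which $g$ pulls back to $g_t + \D x \otimes \D x$. To extract the warping, I would differentiate $\clm(\D x) u = \RelDiracInv u$ using the twistor equation $\nabla_\xi u = (\psi/n)\,\clm(\xi^\flat)\RelDiracInv u$, the parallelism of $\RelDiracInv$, the anticommutation $\RelDiracInv \clm(\xi^\flat) = -\clm(\xi^\flat)\RelDiracInv$, and the Clifford identity $\clm(\D x)\clm(\xi^\flat)\clm(\D x) = \clm(\xi^\flat) - 2\D x(\xi)\clm(\D x)$. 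A short manipulation yields
\[
  \clm(\nabla_\xi \D x)\, u \;=\; h(x)\, \clm\bigl(\xi^\flat - \D x(\xi)\, \D x\bigr) u,
\]
and since Clifford multiplication by a nonzero covector is invertible on the spinor factor and $u$ is nowhere zero, this upgrades to the tensorial identity $\nabla_\xi \D x = h(x)\bigl(\xi^\flat - \D x(\xi)\,\D x\bigr)$. Restricting to $\xi, \eta \in TM_t$ yields the second fundamental form relation $\II_{M_t}(\xi,\eta) = h(t) g_t(\xi,\eta)$, i.e.\ $\partial_t g_t = 2h(t)\, g_t$. Integration gives $g_t = (\varphi(t)/\varphi(t_0))^2\, g_{t_0}$, so with $g_M \coloneqq \varphi(t_0)^{-2}\, g_{t_0}$ one obtains the claimed isometry $g = \varphi^2 g_M + \D x \otimes \D x$.

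For the parallelism assertion, on $\xi \in TM_t$ we have $\D x(\xi) = 0$, so Step~3 specializes to $\nabla_\xi \D x = h(t)\, \xi^\flat$. Substituting into $\widetilde{\nabla}$ and combining the twistor equation with $\psi = -\tfrac{n}{2} h(x)$ and $\RelDiracInv u = \clm(\D x) u$, the two contributions cancel and $\widetilde{\nabla}_\xi u = 0$; the same argument applies to any other kernel element, which by unique continuation is either zero or nowhere vanishing. The main obstacle of the plan is the Clifford manipulation in Step~3 that converts the spinorial identity into a tensorial one: this conversion relies crucially both on the nowhere-vanishing of $u$ established in Step~2 and on the sharp saturation of the spectral estimate in Step~1.
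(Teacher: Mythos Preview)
Your proof is correct and follows essentially the same route as the paper: saturate the spectral estimate \labelcref{eq:callias-estimate-boundary-condition} to obtain the twistor equation and the identity \(\clm(\D x)u = \RelDiracInv u\), deduce that \(|w|^2\) is constant via \cref{rem:modified_section}, differentiate the Clifford identity to compute the Hessian \(\nabla^2 x = h(x)(g - \D x \otimes \D x)\), and integrate \(\partial_t g_t = 2h(t)g_t\). One minor remark: your appeal to unique continuation in Step~2 is unnecessary, since once \(\D|w|^2 = 0\) almost everywhere on the connected band \(V\) and \(u \not\equiv 0\), the constancy of \(|w|^2\) already forces \(u\) to be nowhere zero; the paper proceeds this way and thereby avoids invoking any unique continuation principle.
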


The proof of this theorem is based on the following lemma, where we include the slightly more general situation of \(x\) being a not necessarily smooth Lipschitz function.
This will be of importance later for the rigidity of bands.
\begin{lem}\label{warped-product-rigidity-lemma}
  Let \((V,g)\) be an \(n\)-dimensional Riemannian spin band and \(E \to V\) be a Hermitian vector bundle endowed with a metric connection.
  Let \(x \colon V \to I\) be a \(1\)-Lipschitz function, and \(\varphi, h, f \colon I \to \R\), \(\kappa, \nu \colon V \to \R\) be smooth functions satisfying \cref{setup:WarpingFunctions}.
  Suppose furthermore that the conditions \labelcref{item:curv-bounds,item:width-function,item:mean-bound,item:callias-kernel} from the statement of \cref{thm:general-warped-product-rigidity} are satisfied.
  Then, for any \(u \in \ker(\Callias_{\psi, s})\), the section \(w \coloneqq \varphi(x)^{-\tfrac{1}{2}} u\) lies in \(\SobolevH^{1}_{\RelDiracInv, s}(V, S)\) and satisfies almost everywhere
  \begin{align} 
    \clm(\D x) w &= \RelDiracInv w, \label{eq:RelDiracInv-vs-dx} \\
    \nabla_\xi w &= \frac{f(x)}{n} \clm(\xi^\flat \wedge \D x) w, \label{eq:modified-section-cov-derivative}
  \end{align}
  for every vector field \(\xi\) on \(V\), where \(\clm(\xi^\flat \wedge \D x ) = \tfrac{1}{2}(\clm(\xi^\flat) \clm(\D x)  - \clm(\D x) \clm(\xi^\flat))\).
  In particular, \(|w|^2\) is a constant function and thus \(|u|^2\) is a constant multiple of the function \(\varphi(x)\).
  Moreover, \(|\D x| =1 \) almost everywhere and \(\scal_g = n(n-1) \kappa\).
\end{lem}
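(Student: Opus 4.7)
The plan is to exploit the equality case of \cref{thm:callias-estimate-with-boundary} applied to a nonzero $u \in \ker(\Callias_{\psi,s})$, which exists by \labelcref{item:callias-kernel}. Since $\Callias_\psi u = 0$, the left-hand side of \labelcref{eq:callias-estimate-boundary-condition} vanishes. The boundary hypothesis \labelcref{item:mean-bound}, combined with $\psi|_{\partial_\pm V} = f(t_\pm) = -\tfrac{n}{2} h(t_\pm)$, makes the boundary integrand $\tfrac{n}{2}\mean_g + s\psi$ non-negative on $\partial V$. On the other hand, using $\scal_g \geq n(n-1)\kappa$, $\Curv^E \geq \nu$, $f' > 0$ and $|\D x| \leq 1$, the interior integrand is bounded below by
\[
\bigl(\tfrac{n^2 \kappa}{4} + \tfrac{n\nu}{n-1} + f(x)^2 - f'(x)\bigr)|u|^2 + f'(x)\bigl(1-|\D x|\bigr)|u|^2,
\]
which is non-negative by the balance equation \labelcref{eq:warping-balance}. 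Since $0 \geq 0$, every intermediate inequality must be an equality.

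Invoking the equality characterization of \cref{thm:callias-estimate-with-boundary} yields $\Penrose u = 0$ and $\clm(\D\psi)\RelDiracInv u = -|\D\psi|u$ almost everywhere. Combined with $\Dirac u = -\psi \RelDiracInv u$ coming from $\Callias_\psi u = 0$, the Penrose equation rearranges into $\nabla_\xi u = \tfrac{f(x)}{n}\,\clm(\xi^\flat)\RelDiracInv u$, exactly as in \cref{rem:equality-kernel}. Separate saturation of the scalar- and twist curvature bounds forces $\scal_g = n(n-1)\kappa$ and $\langle u, \Curv^E u\rangle = \nu|u|^2$ on $\supp u$, while the vanishing of the non-negative remainder $f'(x)(1-|\D x|)|u|^2$ gives $|\D x| = 1$ almost everywhere on $\supp u$. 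Finally, substituting $\D\psi = f'(x)\D x$ with $f'(x) > 0$ into the identity $\clm(\D\psi)\RelDiracInv u = -|\D\psi|u$ and using that $\RelDiracInv$ anti-commutes with $\clm(\D x)$ together with $|\D x| = 1$ on $\supp u$, one deduces $\clm(\D x) u = \RelDiracInv u$ on $\supp u$.

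Next, define $w \coloneqq \varphi(x)^{-1/2}u$. Since $\varphi$ is smooth and strictly positive on the compact subinterval $x(V) \subseteq [t_-, t_+]$ and $x$ is Lipschitz, the factor $\varphi(x)^{-1/2}$ is a real-valued Lipschitz function on $V$; multiplication by such a scalar preserves $\SobolevH^{1}_{\RelDiracInv,s}(V,S)$ (the boundary chirality $\chi$ commutes with multiplication by scalars). Applying the Leibniz rule and substituting the connection formula for $u$ produces \labelcref{eq:modified-section-cov-derivative_remark-general} of \cref{rem:modified_section}. On $\supp u$, the already-established identity $\RelDiracInv u = \clm(\D x) u$ together with the Clifford relation $\clm(\xi^\flat)\clm(\D x) = \clm(\xi^\flat \wedge \D x) - \D x(\xi)$ collapses this to the desired formula \labelcref{eq:modified-section-cov-derivative}; off $\supp u$ both sides vanish, so \labelcref{eq:modified-section-cov-derivative} and \labelcref{eq:RelDiracInv-vs-dx} hold almost everywhere on $V$.

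Since $\clm(\xi^\flat \wedge \D x)$ is pointwise anti-self-adjoint, \labelcref{eq:modified-section-cov-derivative} implies $\D|w|^2 = 2\Re\langle w, \nabla w\rangle = 0$ in the weak sense, so $|w|^2$ is constant on the connected band $V$. The chosen $u$ being nonzero makes this constant strictly positive, so $u$ is nowhere vanishing and $\supp u = V$; consequently $|u|^2 = c\,\varphi(x)$ for a positive constant $c$, the identity $|\D x| = 1$ propagates to almost everywhere on $V$, and $\scal_g = n(n-1)\kappa$ holds a.e., hence everywhere by continuity. The main obstacle is the careful bookkeeping of which identities initially hold only on $\supp u$ versus almost everywhere on $V$; the crucial bootstrap is that the constancy of $|w|^2 > 0$ is what upgrades the pointwise conclusions from $\supp u$ to all of $V$.
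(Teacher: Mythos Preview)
Your argument is correct and follows essentially the same route as the paper. The only cosmetic difference is that the paper works with the \emph{first} inequality in \labelcref{eq:callias-estimate-boundary-condition} and isolates the single non-negative term \(f'(x)\langle u,(1+\clm(\D x)\RelDiracInv)u\rangle\), whose vanishing yields \(\clm(\D x)u=\RelDiracInv u\) directly (and then obtains \(|\D x|=1\) afterwards by taking norms in \labelcref{eq:RelDiracInv-vs-dx}); you instead use the second inequality, extract \(|\D x|=1\) from the remainder \(f'(x)(1-|\D x|)|u|^2\), and combine it with the equality condition \(\clm(\D\psi)\RelDiracInv u=-|\D\psi|u\). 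Both orderings lead to the same conclusions via \cref{rem:modified_section}. One small wording point: your dichotomy ``on \(\supp u\)'' versus ``off \(\supp u\)'' is slightly imprecise, since the relevant pointwise alternative is ``\(u(p)\neq 0\)'' versus ``\(u(p)=0\)'' almost everywhere; but as you note, once \(|w|^2\) is shown to be a positive constant this distinction evaporates.
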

\begin{proof}
  We first observe that \(|\D x| \leq 1\) almost everywhere because \(x\) is \(1\)-Lipschitz.
  Let \(u \in \ker(\Callias_{\psi,s})\).
  By \labelcref{item:callias-kernel}, we can assume that \(u \neq 0\).
  Then, using \labelcref{eq:callias-estimate-boundary-condition} from \cref{thm:callias-estimate-with-boundary}, we obtain
  \begin{align}
  0 &= \int_V | \Callias_\psi u |^2\ \vol_V \nonumber \\
  &\geq \tfrac{n}{4(n-1)}\int_V (\scal_g - n(n-1) \kappa)|u|^2\vol_V +  \tfrac{n}{(n-1)}\int_V (\langle u, \Curv^E u \rangle - \nu |u|^2 )\vol_V \nonumber\\
  &\quad +  \int_V \left(\tfrac{n^2 \kappa}{4} + \tfrac{n \nu}{n-1} + f(x)^2 - f'(x)\right)|u|^2\ \vol_V + \int_V f'(x) \langle u, (1 + \clm(\D x) \RelDiracInv) u\rangle\ \vol_V \nonumber \\
  &\quad + \int_{\partial_- V} (\tfrac{n}{2} \mean_g - f(t_-))|u|^2\ \vol_{\partial_-V} + \int_{\partial_+ V} (\tfrac{n}{2} \mean_g + f(t_+))|u|^2\ \vol_{\partial_+V}. \label{eq:general-band-comparison-estimate}
  \end{align}
  Note that in the latter six integrals each integrand is non-negative---in the first two cases due to \labelcref{item:curv-bounds}, in the third this follows from \labelcref{eq:warping-balance}, in the fourth from~\(|\D x| \leq 1\), and for the two boundary integrals this is a consequence of the assumption \labelcref{item:mean-bound}.
  Thus all integrands appearing in \labelcref{eq:general-band-comparison-estimate} vanish (almost everywhere) and we are in the equality situation of \cref{thm:callias-estimate-with-boundary}.
  Since \(f' > 0\) and \(1 + \clm(\D x) \RelDiracInv \geq 0\) almost everywhere, we furthermore deduce from vanishing of the fourth integrand that \(\clm(\D x) \RelDiracInv u = - u\) almost everywhere and thus
  \begin{equation}
    \clm(\D x) u = \RelDiracInv u. \label{eq:epsilon-vs-dx}
  \end{equation}
  This already proves \labelcref{eq:RelDiracInv-vs-dx} for \(w \coloneqq \varphi(x)^{-\tfrac{1}{2}} u \).
  The identity \labelcref{eq:modified-section-cov-derivative} and the fact that \(|u|^2 = c \varphi\) for \(c = |w|^2\) constant is now a consequence of \cref{rem:modified_section} because \labelcref{eq:epsilon-vs-dx} holds almost everywhere.
  Moreover, since \(|u|^2 = c \varphi > 0\), vanishing of the first integrand in \labelcref{eq:general-band-comparison-estimate} implies that \(\scal_g = n(n-1) \kappa\).
  It also follows from \labelcref{eq:RelDiracInv-vs-dx} that \(|\D x|^2 |w|^2 = |\clm(\D x) w |^2 = | \RelDiracInv w|^2 = |w|^2\) and thus \(|\D x| =1\) almost everywhere.
\end{proof}

\begin{proof}[Proof of \cref{thm:general-warped-product-rigidity}]
  We will refer to \cref{warped-product-rigidity-lemma} and its proof in the following argument.
  First of all, we obtain that \(|\D x| = 1\).
  Since \(x\) is assumed to be smooth, this already implies that \(V\) is diffeomorphic to \(M \times [t_-, t_+]\) with \(g\) corresponding to \(g_x + \D x \otimes \D x\) for some family of Riemannian metrics \((g_x)_{x \in [t_-, t_+]}\) on \(M\).
  To prove the theorem, it remains to compute the Hessian of \(x\).
  To this end, we let \(0 \neq u \in \ker(\Callias_{\psi,\sigma})\) and let \(w \coloneqq \varphi(x)^{-\tfrac{1}{2}} u \) as in \cref{warped-product-rigidity-lemma}.
  Since \(\psi = f(x)\) is smooth, boundary elliptic regularity (see e.g.~\cite[Theorem~4.4]{Baer-Ballmann:Guide-Boundary-value-problems-Dirac}) implies that \(u\) and thus \(w\) are smooth sections.
  Using \labelcref{eq:RelDiracInv-vs-dx,eq:modified-section-cov-derivative} and the fact that \(\RelDiracInv\) is parallel, we compute for any smooth vector field \(\xi\) on \(V\),
  \begin{align*}
    \clm(\nabla_\xi \D x) w &= \underbrace{\nabla_\xi(\clm(\D x) w)}_{=\RelDiracInv \nabla_\xi w} - \clm(\D x) \nabla_\xi w \\
    &\underset{\text{\labelcref{eq:modified-section-cov-derivative}}}{=}  \frac{f(x)}{n} \RelDiracInv \clm(\xi^\flat \wedge \D x) w - \frac{f(x)}{n} \clm(\D x) \clm(\xi^\flat \wedge \D x) w \\
    &=  \frac{f(x)}{n} \left(\clm(\xi^\flat \wedge \D x) \RelDiracInv w + \clm(\xi^\flat \wedge \D x) \clm(\D x) w \right) \\
    &= \frac{2 f(x)}{n} \clm(\xi^\flat \wedge \D x) \clm(\D x) w \\
    &= -\frac{2 f(x)}{n} \clm(\xi^\flat - \D x(\xi)\ \D x) w
    = h(x) \clm(\xi^\flat - \D x(\xi)\ \D x) w.
  \end{align*}
  Since \(w\) vanishes nowhere, this implies \(\nabla_\xi \D x = h(x)(\xi^\flat - \D x (\xi) \D{x})\) for each vector field \(\xi\). 
  Hence the Hessian of \(x\) is given by
  \begin{equation}
    \nabla^2 x  = h(x) (g - \D x \otimes \D x).\label{eq:warped-product-hessian}
  \end{equation}

  Using standard formulas for Riemannian distance functions (in the sense of~\cite[Section~3.2.2]{Petersen}), the identity \labelcref{eq:warped-product-hessian} implies for the Lie derivative of \(g_x\) that 
  \[\LieDeriv_{\partial_x} g_x = 2 h(x) g_x,\]
  see for instance~\cite[Proposition~3.2.11 (1)]{Petersen}.
  Let \(\tilde{g}_x = \varphi(x)^2 g_M\), where \(g_M \coloneqq \varphi(t_0)^{-2}g|_M\).
  Then \(\tilde{g}_x\) satisfies the same differential equation because
  \begin{equation*}
    \LieDeriv_{\partial_x} \tilde{g}_x = 2 \varphi'(x) \varphi(x) g_{M} = 2 \tfrac{\varphi'(x)}{\varphi(x)} \tilde{g}_x = 2 h(x) \tilde{g}_x. 
  \end{equation*}
  Since \(\tilde{g}_{t_0} = g_{t_0} = g_{M}\) this implies that \(g_x = \tilde{g}_x = \varphi(x)^2 g_{M}\), as desired.
  
  To see the final statement, observe that by \cref{rem:equality-kernel} we have
  \[
     \quad \nabla_\xi u = \frac{f(x)}{n} \clm(\xi^\flat) \RelDiracInv u = \frac{f(x)}{n} \clm(\xi^\flat) \clm(\D x) u.
  \]
  Since \(\nabla_\xi \D x = h(x) \xi = - \frac{2}{n} f(x) \xi\) for any vertical tangent vector \(\xi\), this implies that \(u|_{M \times \{t\}}\) is parallel with respect to \(\widetilde{\nabla}\).
\end{proof}

\section{Rigidity of bands}\label{sec:band-rigidity}
In this section, we use the general results from the previous section to deduce a rigidity result for \(\Ahat\)-bands subject to scalar and mean curvature bounds.
\begin{thm}\label{thm:band-rigidity}
  Let \((V,g)\) be an \(n\)-dimensional band which is a spin manifold and satisfies \(\Ahat(\partial_- V) \neq 0\).
  Suppose furthermore that there exist \(-\pi/n < t_- < t_+ < \pi/n\) such that
  \begin{myenumi}
    \item \(\scal_g \geq n(n-1)\); \label{item:scal-bound-band}
    \item \(\width(V,g) \geq t_+ - t_-\);
    \item \(\mean_g|_{\partial_\pm V} \geq \mp \tan({n t_\pm}/2)\), where \(\mean_g\) denotes the mean curvature of \(\partial V\). \label{item:mean-bound-band}
  \end{myenumi}
  Then \((V,g)\) is isometric to a warped product \((M \times [t_-, t_+], \varphi^2 g_M + \D{x} \otimes \D{x})\), where \(\varphi(t) = \cos\left(n t/{2}\right)^{2/n}\) and \(g_M\) is some Riemannian metric on \(M\) which carries a non-trivial parallel spinor.
  In particular, \(g_M\) is Ricci-flat.
\end{thm}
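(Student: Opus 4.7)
The plan is to derive this rigidity result directly from \cref{thm:general-warped-product-rigidity} by a suitable choice of auxiliary data. I would take the interval $I = (-\pi/n, \pi/n)$, the warping function $\varphi(t) = \cos(nt/2)^{2/n}$, and for $E \to V$ the trivial Hermitian line bundle with trivial connection; and I would set $\kappa \equiv 1$ and $\nu \equiv 0$. A short computation gives $h(t) = \varphi'(t)/\varphi(t) = -\tan(nt/2)$ and $f(t) = -\tfrac{n}{2} h(t) = \tfrac{n}{2}\tan(nt/2)$. Strict log-concavity of $\varphi$ amounts to $h'(t) = -\tfrac{n}{2}\sec^2(nt/2) < 0$, and the balance condition \labelcref{eq:warping-balance} of \cref{setup:WarpingFunctions} holds with equality since $\tfrac{n^2}{4} + f^2 - f' = \tfrac{n^2}{4}\bigl(1 + \tan^2(nt/2) - \sec^2(nt/2)\bigr) = 0$.

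Next I would verify the four hypotheses of \cref{thm:general-warped-product-rigidity}. Condition \labelcref{item:curv-bounds} follows from $\scal_g \geq n(n-1)$ and $\Curv^E = 0$. For \labelcref{item:width-function} one applies \cref{lem:width-function} to the assumption $\width(V,g) \geq t_+ - t_-$ to produce a smooth width function $x \colon V \to [t_-, t_+]$. Condition \labelcref{item:mean-bound} is simply a restatement of \labelcref{item:mean-bound-band} since $h(t_\pm) = -\tan(nt_\pm/2)$. Finally, for the non-triviality \labelcref{item:callias-kernel} of the Callias kernel, \cref{ex:band-index-conputation} applied to the trivial line bundle yields
\[
  \ind(\Callias_{\psi, s}) = \int_{\partial_- V} \AhatClass(\partial_- V) = \Ahat(\partial_- V) \neq 0,
\]
so in particular $\ker(\Callias_{\psi, s}) \neq 0$. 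Applying \cref{thm:general-warped-product-rigidity} then gives the desired isometry $(V,g) \cong (M \times [t_-, t_+], \varphi^2 g_M + \D x \otimes \D x)$ with $\varphi$ as in the statement.

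It remains to produce a non-trivial parallel spinor on $(M, g_M)$. For this I would invoke the last assertion of \cref{thm:general-warped-product-rigidity}: for any non-zero $u \in \ker(\Callias_{\psi, s})$ the restriction $u|_{M \times \{t_0\}}$ is parallel with respect to $\widetilde\nabla_\xi = \nabla_\xi + \tfrac{1}{2}\clm(\nabla_\xi \D x)\clm(\D x)$. Under the warped product identification one computes $\nabla_\xi \D x = h(x) \xi^\flat$ for $\xi$ tangent to the cross-section, so $\widetilde\nabla$ coincides with the canonical boundary connection from \labelcref{eq:boundary-connection} on $M \times \{t_0\} \hookrightarrow V$ with unit normal $\partial_x$, i.e.\ with the intrinsic Levi-Civita spin connection on $(M, \varphi(t_0)^2 g_M)$. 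Since a constant conformal rescaling preserves the Levi-Civita connection, and thereby also its parallel spinors, this yields a parallel spinor on $(M, g_M)$. By \cref{warped-product-rigidity-lemma}, $|u|^2$ is a positive constant multiple of $\varphi(x)$, so the restriction is nowhere vanishing. Ricci-flatness of $g_M$ is then the classical consequence of the identity $\Ric(\xi)\cdot \psi = 0$ for parallel spinors $\psi$.

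The only delicate technical point is the identification of the relevant summand of $S|_{M \times \{t_0\}}$, with $S$ as in \cref{ex:SimpleBand}, with the spinor bundle of $(M, \varphi(t_0)^2 g_M)$. Since $\dim V = n$ is odd and $\dim M = n-1$ is even the ranks match, and the equality $\clm(\D x) u = \RelDiracInv u$ from \labelcref{eq:RelDiracInv-vs-dx} singles out a subbundle of the correct rank on which the hypersurface spin connection acts as expected; no essential obstacle beyond bookkeeping is anticipated here.
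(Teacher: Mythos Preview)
Your overall strategy matches the paper's, but there is a genuine gap at the point where you invoke \cref{lem:width-function} to obtain a \emph{smooth} width function \(x \colon V \to [t_-,t_+]\). That lemma only yields a smooth width function under the strict inequality \(t_+ - t_- < \width(V,g)\); when \(t_+ - t_- = \width(V,g)\) it merely gives a Lipschitz function smooth near \(\partial V\). Since \cref{thm:general-warped-product-rigidity} explicitly assumes \(x\) is smooth (this is needed to compute the Hessian \(\nabla^2 x\) and identify the warped product structure), you cannot apply it directly. Note that the equality case is precisely the one that matters here: if \(\width(V,g) > t_+ - t_-\) held strictly, your argument would produce a warped product of width \(t_+ - t_-\), a contradiction; so one is forced into the regime where only a Lipschitz width function is available.

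The paper closes this gap by a bootstrapping regularity argument. One first applies \cref{warped-product-rigidity-lemma}, which allows Lipschitz \(x\), to a nonzero \(u \in \ker(\Callias_{\psi,s})\). Using \(\clm(\D x)u = \RelDiracInv u\) and the balance identity \(\tfrac{n^2}{4} + f^2 - f' = 0\), one computes
\[
  0 = \Callias_\psi^2 u = \Dirac^2 u + f'(x)\clm(\D x)\RelDiracInv u + f(x)^2 u = \Dirac^2 u - \tfrac{n^2}{4} u.
\]
Interior elliptic regularity for the smooth-coefficient operator \(\Dirac^2 - \tfrac{n^2}{4}\) then shows \(u\) is smooth in \(\interior V\). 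Since \(|u|^2\) is a positive multiple of \(\varphi(x)\), the section \(u\) vanishes nowhere, and the relation \(\clm(\D x)u = \RelDiracInv u\) forces \(\D x\) to be smooth in the interior; together with smoothness near \(\partial V\) this gives \(x\) smooth everywhere, and only then does \cref{thm:general-warped-product-rigidity} apply. Your treatment of the parallel spinor on \(M\) is essentially the same as the paper's.
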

\begin{proof}
 We first prepare a particular case of \cref{setup:WarpingFunctions} we wish to apply.
 To this end, let
\[
  \varphi \colon \left(-\tfrac{\pi}{n}, \tfrac{\pi}{n}\right) \to (0, \infty), \quad \varphi(t) \coloneqq \cos\left(\tfrac{n t}{2}\right)^{\frac{2}{n}}.
\]
Then
\[
  h(t) \coloneqq \frac{\varphi'(t)}{\varphi(t)} = -\tan\left(\tfrac{n t}{2}\right), \quad h'(t) = - \frac{n}{2} \frac{1}{\cos\left(\tfrac{nt}{2}\right)^2} < 0,
  \]
and
\begin{equation}
  \frac{n^2}{4} + f(t)^2 - f'(t) = 0, \label{eq:band-warping-equality}
\end{equation}
where \(f = -\frac{n}{2} h\).
Thus \eqref{eq:warping-balance} is satisfied with \(\kappa \equiv 1\) and \(\nu \equiv 0\) and we are in \cref{setup:WarpingFunctions}.
Next we use \cref{lem:width-function} to choose a width function \(x \colon V \to [t_-, t_+]\) which is smooth near the boundary of \(V\).
We let \(\psi = f \circ x\) and form the Callias-type operator \(\Callias_{\psi}  = \Dirac + \psi \RelDiracInv\).

We choose signs \(s \colon \partial M \to \{\pm 1\}\) as in \cref{ex:band-index-conputation}.
Then, since \(\Ahat(\partial_{-} V) \neq 0\), we deduce from \cref{ex:band-index-conputation} that \(\ind(\Callias_{\psi, s}) \neq 0\).
In particular, \(\ker(\Callias_{\psi, s}) \neq 0\).

At this point, if \(x\) was smooth everywhere, the result would follow readily from \cref{thm:general-warped-product-rigidity} because \labelcref{item:mean-bound-band} says that \(\mean_g|_{\partial_\pm V} \geq \pm h(t_\pm)\).
However, since we do not know this a priori, we need to supply an argument ensuring that \(x\) is indeed smooth everywhere.
To this end, we fix \(0 \neq u \in \ker(\Callias_{\psi,s})\) and apply \cref{warped-product-rigidity-lemma}.
From \labelcref{eq:square_of_callias,eq:RelDiracInv-vs-dx,eq:band-warping-equality}, we thus deduce
\begin{align*}
0 = \Callias_\psi^2 u &= \Dirac^2 u + f'(x) \clm(\D x) \RelDiracInv u + f(x)^2 u  \\
&= \Dirac^2 u -f'(x) u + f(x)^2 u \\
&= \Dirac^2 u - \tfrac{n^2}{4} u.
\end{align*}
Then interior elliptic regularity for the operator \(\Dirac^2 - n^2/4\) implies that \(u\) is smooth in the interior of \(V\).
Since, by \cref{warped-product-rigidity-lemma}, \(u\) is nowhere vanishing and we have the equality \(\clm(\D x) u = \RelDiracInv u\), this implies that the covector field \(\D x\) must also be smooth in the interior of \(V\).
Since we already know that \(x\) is smooth near the boundary, this just means that the function \(x\) is smooth everywhere and so we can indeed apply \cref{thm:general-warped-product-rigidity}. 

We conclude that \((V,g)\) is isometric to a warped product \((M \times [t_-, t_+], \varphi^2 g_M + \D{x} \otimes \D{x})\) and \(\scal_g \equiv n(n-1)\).
Moreover, the final statement of \cref{thm:general-warped-product-rigidity} implies that any \(0 \neq u \in \ker(\Callias_{\psi, s})\) restricts to a nowhere-vanishing parallel spinor on each fiber (compare~\eqref{eq:boundary-connection}).
It is a well-known fact that the existence of a parallel spinor forces the Ricci curvature to vanish, see for instance~\cite[Corollary~2.8]{SpinorialApproach}.
\end{proof}

\begin{cor}\label{cor:band-rigidity-concrete}
Let \((V,g)\) be an \(n\)-dimensional band which is a spin manifold and satisfies \(\Ahat(\partial_- V) \neq 0\).
Suppose that \(\scal_g \geq n(n-1)\).
  Let \(0 < d < \pi/n\) and assume furthermore that one of the following conditions holds:
  \begin{itemize}
    \item either \(\width(V,g) \geq 2d\) and \(\mean_g|_{\partial V} \geq -\tan({n d}/2)\),
    \item or \(\width(V,g) \geq d\) and \(\mean_g|_{\partial_- V} \geq 0\), \(\mean_g|_{\partial_+ V} \geq -\tan({n d}/2)\).
  \end{itemize}
  Then \((V,g)\) is isometric to a warped product \((M \times I, \varphi^2 g_M + \D{x} \otimes \D{x})\), where either \(I = [-d,d]\) or \(I=[0,d]\), \(\varphi(t) = \cos\left(n t/{2}\right)^{2/n}\) and \(g_M\) is some Riemannian metric on \(M\) which carries a non-trivial parallel spinor.
  In particular, \(g_M\) is Ricci-flat.
\end{cor}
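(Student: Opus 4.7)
The plan is to deduce \cref{cor:band-rigidity-concrete} as a direct specialization of \cref{thm:band-rigidity} by choosing the endpoints $t_\pm$ appropriately in each of the two cases. Recall that \cref{thm:band-rigidity} requires $-\pi/n < t_- < t_+ < \pi/n$, the width bound $\width(V,g) \geq t_+ - t_-$, and the asymmetric mean curvature condition $\mean_g|_{\partial_\pm V} \geq \mp \tan({n t_\pm}/2)$, i.e.\ $\mean_g|_{\partial_- V} \geq \tan(n t_-/2)$ and $\mean_g|_{\partial_+ V} \geq -\tan(n t_+/2)$.

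In the symmetric case, I would set $t_- = -d$ and $t_+ = d$. The constraint $-\pi/n < t_\pm < \pi/n$ holds thanks to the assumption $0 < d < \pi/n$, and the width hypothesis matches since $t_+ - t_- = 2d \leq \width(V,g)$. The uniform bound $\mean_g|_{\partial V} \geq -\tan(n d/2)$ provides simultaneously the inequality $\mean_g|_{\partial_+ V} \geq -\tan(n t_+/2)$, and, using that $\tan$ is odd, the inequality $\mean_g|_{\partial_- V} \geq -\tan(n d /2) = \tan(n t_-/2)$. Hence \cref{thm:band-rigidity} applies and yields the stated warped product with $I = [-d, d]$.

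In the asymmetric case, I would instead set $t_- = 0$ and $t_+ = d$. Again $-\pi/n < t_\pm < \pi/n$ and $t_+ - t_- = d \leq \width(V,g)$. Using $\tan(0) = 0$, the two assumed mean curvature bounds translate directly into $\mean_g|_{\partial_- V} \geq 0 = \tan(n t_-/2)$ and $\mean_g|_{\partial_+ V} \geq -\tan(n d/2) = -\tan(n t_+/2)$, matching the hypotheses of \cref{thm:band-rigidity}. A second invocation of that theorem now furnishes the warped product description over $I = [0, d]$, with the claimed warping function $\varphi(t) = \cos(nt/2)^{2/n}$ and Ricci-flat cross section.

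There is essentially no obstacle here, since all the analytic and geometric content has been absorbed into \cref{thm:band-rigidity}; the corollary merely records two natural symmetric-looking reformulations of that theorem which are more convenient to apply in practice. The only thing worth double-checking is the sign bookkeeping: the convention $\mean_g|_{\partial_\pm V} \geq \mp \tan(n t_\pm / 2)$ is asymmetric in $\pm$, so one must be careful that letting $t_-$ become negative (or zero) produces the expected non-negative lower bound on the mean curvature of $\partial_- V$, as verified above.
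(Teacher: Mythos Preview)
Your proposal is correct and matches the paper's proof essentially verbatim: the paper also derives the corollary by applying \cref{thm:band-rigidity} with \(t_\pm = \pm d\) in the first case and \(t_- = 0\), \(t_+ = d\) in the second. Your sign bookkeeping is accurate.
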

\begin{proof}
  This follows immediately from \cref{thm:band-rigidity} by setting \(t_\pm = \pm d\) in the first case, and \(t_- = 0\) and \(t_+ = d\) in the second.
\end{proof}

\section{Scalar-mean extremality and rigidity of warped products}
In this section, we prove our general extremality and rigidity results for logarithmically concave warped products.

As a preparation for the proof of the main theorem, we discuss a particularly relevant example of a twisting bundle \(E \to V\) to be used in \cref{thm:general-warped-product-rigidity}, namely the fiberwise spinor bundle on a warped product.

\begin{rem}[Twisting with the fiberwise spinor bundle] \label{fiberwise-spinor}
  Consider a warped product band
  \[
    (V \coloneqq M \times [t_-, t_+], g \coloneqq \varphi^2 g_M + \D{x} \otimes \D{x})
  \]
   and let \(\ReducedSpinBdl \to V\) be the spinor bundle with respect to the metric \(g\).
   Then we let \(E_0 \coloneqq \ReducedSpinBdl\) be the same bundle endowed with the same bundle metric but with the \enquote{fiberwise spinor connection}
   \[
    \nabla^{E_0}_X \coloneqq \nabla^{\ReducedSpinBdl}_X + \frac{1}{2} \clm(\nabla_X \D x) \clm(\D x) \quad (X \in \T V).
   \]
   With this connection, each restriction \(E_0|_{M \times \{t\}}\) is precisely the spinor bundle of \(M \times \{t\}\) (if \(n\) is odd) or two copies of it (if \(n\) is even), compare also \eqref{eq:boundary-connection}.
   Moreover, a direct calculation using the warped product structure shows that the curvature tensor of \(\nabla^{E_0}\) satisfies \(\FullCurv^{E_0}_{\partial_x, X} = 0\) for any tangent vector \(X \in \T V\).
   In other words, the connection \(\nabla^{E_0}\) is chosen in such a way that only the vertical directions contribute to its curvature.
   Consequently, if we form the twisted spinor bundle \(\ReducedSpinBdl \otimes E_0\), the corresponding curvature endomorphism from the Bochner--Lichnerowicz--Weitzenböck formula (see~\labelcref{Weitzenbock,E:Band-Lichnerowicz}) satisfies 
   \[
    \Curv^{E_0}|_{M \times \{t\}} = \sum_{i=1}^{n-1} \clm(e^i)\clm(e^j) \otimes \FullCurv^{\nabla^{E_0}}_{e_i,e_j} = \sum_{i=1}^{n-1} \clm(e^i)\clm(\D x)\clm(e^j)\clm(\D x) \otimes \FullCurv^{\nabla^{E_0}}_{e_i,e_j} = \Curv^{E_0|_{M \times \{t\}}},
   \]
   where \(e_1, \dotsc, e_{n-1}\) is a local orthonormal frame of \(\T M\) and \(\Curv^{E_0|_{M \times \{t\}}}\) denotes the Weitzenböck curvature endomorphism on the fiber \(M \times \{t\}\) of the twisting bundle \(E_0|_{M \times \{t\}}\) (or two copies thereof).
   Finally, since the metric on \(M \times \{t\}\) is simply the constant multiple \(\varphi(t)^2 g_M\) of the metric \(g_M\), we can identify each restriction \(E_0|_{M \times \{t\}}\) with the spinor bundle on \(M\) (or two copies of it) and with respect to this identification, we obtain
   \begin{equation}
    \Curv^{E_0}|_{M \times \{t\}} = \varphi(t)^{-2}\Curv^{M, E_0}, \label{weitzenboeck-curv-fiberwise-spinor}
   \end{equation}
   where \(\Curv^{M, E_0}\) denotes the curvature endomorphism on \((M, g_M)\) associated to using the spinor bundle on \(M\) itself as a twisting bundle (or two copies of each).
\end{rem}

We now state and proof the main result of this section.

\begin{thm} \label{thm:goette-semmelmann-band-rigidity}
  Let \(n\) be odd and \((N, g_N)\) be an \((n-1)\)-dimensional Riemannian spin manifold of non-vanishing Euler-characteristic whose Riemannian curvature operator is non-negative.
  Moreover, let \(\varphi \colon [t_-, t_+] \to (0,\infty)\) be a strictly logarithmically concave function and consider the warped product metric \(g_0 = \varphi^2 g_N + \D y \otimes \D y\) on \(V_0 \coloneqq N \times [t_-, t_+]\).
  Let \((V,g)\) be an \(n\)-dimensional Riemannian spin band and \(\Phi \colon (V, g) \to (V_0, g_0)\) a smooth band map such that
  \begin{myenumi}
    \item \(\Phi\) is \(1\)-Lipschitz and of non-zero degree;
    \item \(\scal_g \geq \scal_{g_0} \circ\ \Phi\);
    \item \(\mean_g|_{\partial_\pm V} \geq \mean_{g_0}|_{\partial_\pm V_0} = \pm h(t_\pm)\), where \(h = \varphi'/\varphi\).
  \end{myenumi}
  Then \(\scal_g = \scal_{g_0} \circ\ \Phi\) and \((V,g)\) is isometric to a warped product 
  \[(M \times [t_-, t_+], \varphi^2 g_M + \D{x} \otimes \D{x}),\] 
  where \(x = y \circ \Phi\), \(M = x^{-1}(t_0)\) for any \(t_0 \in [t_-,t_+]\) and \(g_M \coloneqq \varphi(t_0)^{-2} g|_{M}\) on \(M\).
  Moreover, we have \(\scal_{g_M} = \scal_{g_N} \circ\ \Phi|_{M}\) in this case.
  
  If, furthermore, the metric on \(N\) satisfies \(\Ric_{g_N} > 0\), then \(\Phi\) is an isometry under the above hypotheses.
\end{thm}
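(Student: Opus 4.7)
The plan is to deduce the theorem from the general rigidity \cref{thm:general-warped-product-rigidity}, with the twisting bundle obtained by pulling back the fiberwise spinor bundle of $V_0$ via $\Phi$. I set $x \coloneqq y \circ \Phi \colon V \to [t_-, t_+]$, which is smooth and $1$-Lipschitz and hence a smooth width function. With $\varphi$ as given, define $h \coloneqq \varphi'/\varphi$, $f \coloneqq -\tfrac{n}{2} h$, and take as abstract curvature data
\begin{equation*}
  \kappa \coloneqq \tfrac{1}{n(n-1)}(\scal_{g_0} \circ \Phi), \qquad \nu \coloneqq -\tfrac{1}{4\varphi(x)^2}(\scal_{g_N} \circ \Phi_N),
\end{equation*}
where $\Phi_N \colon V \to N$ denotes $\Phi$ followed by the projection onto $N$. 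A direct computation using the standard warped-product formula for $\scal_{g_0}$ shows that \labelcref{eq:warping-balance} is actually an equality under these choices, so \cref{setup:WarpingFunctions} is satisfied. The scalar part of \labelcref{item:curv-bounds} is exactly hypothesis (ii), and the mean curvature condition \labelcref{item:mean-bound} translates into hypothesis (iii) via $\mean_{g_0}|_{\partial_\pm V_0} = \pm h(t_\pm)$.

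For the twisting bundle I take $E \coloneqq \Phi^\ast E_0$, with $E_0 \to V_0$ the fiberwise spinor bundle of \cref{fiberwise-spinor}, carrying the $\Z/2$-grading induced by the chirality splitting $S_N = S_N^+ \oplus S_N^-$ on the even-dimensional fiber $N$. Since $\Curv^{E_0}$ has vanishing $\partial_y$-components, a Goette-Semmelmann type estimate using non-negativity of the curvature operator of $N$ together with the $1$-Lipschitz property of $\Phi$ yields $\Curv^{E} \geq \nu$, completing \labelcref{item:curv-bounds}. For condition \labelcref{item:callias-kernel}, I apply \cref{ex:band-index-conputation} and Atiyah--Singer: separating into the two chirality components and taking the difference gives
\begin{equation*}
  \ind(\Callias_{\psi, s}^{E^+}) - \ind(\Callias_{\psi, s}^{E^-}) = \int_{\partial_- V} \AhatClass(\partial_- V) \wedge \Phi|_{\partial_- V}^\ast\bigl(\ch(S_N^+) - \ch(S_N^-)\bigr).
\end{equation*}
Substituting the universal identity $\AhatClass(N) \wedge \ch(S_N^+ - S_N^-) = e(TN)$ and using that $e(TN)$ is concentrated in top degree $n-1$, only the constant term of $\AhatClass(\partial_- V) \wedge \Phi^\ast \AhatClass(N)^{-1}$ contributes, so the integral reduces to $\deg(\Phi) \cdot \chi(N) \neq 0$. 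Consequently at least one of the operators $\Callias_{\psi, s}^{E^\pm}$ has non-trivial kernel.

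Invoking \cref{thm:general-warped-product-rigidity} now gives the asserted isometry $(V,g) \cong (M \times [t_-, t_+], \varphi^2 g_M + \D x \otimes \D x)$ together with $\scal_g = \scal_{g_0} \circ \Phi$; comparing the warped-product scalar-curvature expressions for $g$ and $g_0$ (both with common warping $\varphi$) immediately forces $\scal_{g_M} = \scal_{g_N} \circ \Phi|_M$. For the final rigidity under $\Ric_{g_N} > 0$: the concluding clause of \cref{thm:general-warped-product-rigidity} produces a nowhere-vanishing $u \in \ker(\Callias_{\psi, s})$ whose fiber restrictions are parallel for the induced connection, so that $u|_M$ is a nonzero parallel section of $\Phi|_M^\ast S_N$ over $M$. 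A standard Goette-Semmelmann rigidity argument then forces $\Phi|_M \colon (M, g_M) \to (N, g_N)$ to be a local isometry, and $\Ric_{g_N} > 0$ together with non-zero degree rules out non-trivial coverings (via Myers), so $\Phi|_M$ is an isometry; the common warped-product structure on both sides then upgrades this to $\Phi$ itself being a global isometry. The most delicate step is the Weitzenböck bound $\Curv^{E} \geq \nu$ from the bare $1$-Lipschitz hypothesis---the standard Goette-Semmelmann estimate demands area contraction, and here the fact that $\Curv^{E_0}$ sees only the fiber directions, combined with the warped-product structure of $V_0$, is what allows length contraction to suffice.
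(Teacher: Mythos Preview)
Your approach is essentially the same as the paper's: reduce to \cref{thm:general-warped-product-rigidity} with the twisting bundle $E = \Phi^\ast E_0$ given by the fiberwise spinor bundle, verify the curvature bound via the Goette--Semmelmann estimate, and obtain non-vanishing of the index through the Euler characteristic of $N$. Your explicit splitting into chirality components $E^\pm$ and use of $\AhatClass(N)\wedge\ch(S_N^+ - S_N^-) = e(\T N)$ is in fact more careful than the paper's one-line assertion that the twisted boundary operator ``is the Euler characteristic operator''.

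Two minor issues remain in the final rigidity step. First, you omit the argument that the fiber maps $\Phi_t$ are independent of $t$: once $V \cong M \times [t_-,t_+]$ is established and $\Phi(p,t) = (\Phi_t(p),t)$, the $1$-Lipschitz condition applied to $\partial_x$ gives $|\Phi_\ast(\partial_x)|_{g_0}^2 = \varphi(t)^2\,|\partial_t \Phi_t|_{g_N}^2 + 1 \leq 1$, whence $\partial_t \Phi_t = 0$. The paper includes this short computation, and without it the phrase ``the common warped-product structure on both sides then upgrades this'' is not justified. Second, your Myers argument does not rule out non-trivial coverings: Bonnet--Myers only yields $|\pi_1(N)| < \infty$, which still permits finite Riemannian covers $M \to N$ of degree $|\deg(\Phi)| > 1$. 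The paper here simply cites \cite[Section~1.2]{Goette-Semmelmann} (length-non-increasing case) for the conclusion that $\Phi_{t_0}$ is an isometry and does not elaborate further.
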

\begin{proof}
  The main idea of the proof is to apply the argument of \citeauthor{Goette-Semmelmann}~\cite{Goette-Semmelmann} fiberwise in combination with~\cref{thm:general-warped-product-rigidity}.
  Since \(\Phi\) is a \(1\)-Lipschitz band map, the function \(x = y \circ \Phi \colon V \to [t_-, t_+]\) is a width function.
  We need to verify that we are in an instance of \cref{setup:WarpingFunctions}.
To this end, we set
\begin{equation}
  \kappa \coloneqq \frac{\scal_{g_0} \circ \Phi}{n(n-1)}, \qquad
\nu \coloneqq - \frac{\scal_{g_N} \circ \proj_{N} \circ \Phi}{4 \varphi(x)^2}, \label{eq:choice-warping-functions-general}
\end{equation}
where \(\proj_N \colon V_0 \to N\) is the projection onto the first factor.
It is now a consequence of the discussion in \cref{rem:warping-balance-motivation} that
\begin{equation}
  \frac{n^2 \kappa}{4} + \frac{n \nu}{n-1} + f(x)^2 - f'(x) = 0, \label{eq:warping-balance-general-warped}
\end{equation}
where \(f \coloneqq - \tfrac{n}{2} h\).
In particular, this choice of functions satisfies \cref{setup:WarpingFunctions}.
  
Now we consider the fiberwise spinor bundle \(E_0 \to V_0\) constructed as in \cref{fiberwise-spinor} and let \(E \coloneqq \Phi^\ast E\) the pull-back bundle on \(V\).
Now the main estimate of \citeauthor{Goette-Semmelmann}~\cite[Section~1.1]{Goette-Semmelmann} together with the description of the Weitzenböck curvature endomorphism of \(E_0\) from \labelcref{weitzenboeck-curv-fiberwise-spinor} shows that we precisely have the estimate
  \[
    \Curv^{E} \geq \nu.
  \]
   For each \(t \in [t_-,_+]\), the twisted Dirac operator \(\ReducedSpinDirac_{N \times \{t\}, E_0|_{N \times \{t\}}}\) is the Euler characteristic operator of \(N\) and thus has non-trivial index because \(n-1\) is even.
   Since the degree of \(\Phi\) is non-zero, it follows that the index of \(\ReducedSpinDirac_{\partial_- V, E|_{\partial_- V}}\) is also non-zero.
   Hence, \cref{ex:band-index-conputation} shows that the Callias operator \(\Callias_{\psi, s}\) considered in \cref{thm:general-warped-product-rigidity} has non-trivial index and hence non-trivial kernel.
   Thus \cref{thm:general-warped-product-rigidity} applies and we obtain \(\scal_g = \kappa n(n-1)\) and \((V,g)\) is isometric to a warped product 
   \[ (M \times [t_-,t_+], \varphi^2 g_{M} + \D x \otimes \D x),\]
    where \(M = x^{-1}(t_0)\) for some arbitrary but fixed \(t_0 \in [t_-, t_+]\) and some Riemannian metric \(g_M\) on \(M\).
   Using the warped product structure, we obtain
   \[
   \kappa n(n-1) = \scal_{g} = \frac{\scal_{g_M}}{\varphi(x)^2} - 2(n-1)h'(x) - n(n-1) h(x)^2.
   \]
  Together with \labelcref{eq:warping-balance-general-warped} this completely determines the scalar curvature of \(g_M\) and we obtain \(\scal_{g_M}= \scal_{g_N} \circ \Phi|_{M}\).
  This proves the first part of the theorem.
  
  To prove the second part, we first observe that due to \(x = y \circ \Phi\), under the isometry \(V \cong M \times [t_-, t_+]\), the map \(\Phi\) is of the form \((p, x) \mapsto (\Phi_{x}(p), x)\), where \(\Phi_t \coloneqq \Phi|_{M \times \{t\}} \colon M  \to N \).
  Since \(\| \T \Phi\| \leq 1\) and \(\T \Phi(0, \partial_x) = (\tfrac{\partial}{\partial x}\Phi_x, \partial_x) \), it follows that \(\tfrac{\partial}{\partial x}\Phi_x = 0\), that is, \(\Phi_t = \Phi_{t_0}\) for all \(t\).
  Moreover, \(\Phi_{t_0} \colon M \to N\) is \(1\)-Lipschitz with respect to the metric \(\varphi(t_0)^2 g_M\) and \(\varphi(t_0)^2 g_{N}\).
  Thus the same holds with respect to the metrics \(g_M\) and \(g_N\).
  If we assume that \(\Ric_{g_N} > 0\), then the rigidity argument of~\citeauthor{Goette-Semmelmann}, see \cite[Section~1.2]{Goette-Semmelmann} (and note that our \(\Phi_{t_0}\) is length-non-increasing), implies that \(\Phi_{t_0}\) is an isometry.
   Together with the warped product structure all of this implies that \(\Phi\) itself is an isometry.
\end{proof}

Restricting to the special case where \(\Phi\) is the identify map immediately yields the following corollary.
The notions of \emph{scalar-mean extremality} and \emph{-rigidity} are defined in the introduction in \cref{subsec:intro-rigidity}.

\begin{cor} \label{goette-semmelmann-band-extremality}
  Let \(n\) be odd and \((M, g_M)\) be an \((n-1)\)-dimensional Riemannian spin manifold of non-vanishing Euler-characteristic whose Riemannian curvature operator is non-negative.
  Let \(\varphi \colon [t_-, t_+] \to (0,\infty)\) be a smooth strictly logarithmically concave function and consider the warped product metric \(g_V = \varphi^2 g_M + \D x \otimes \D x\) on \(V \coloneqq M \times [t_-, t_+]\).
  Then any metric \(g\) on \(V\) which satisfies 
  \begin{myenumi}
    \item \(g \geq g_V\),
    \item \(\scal_g \geq \scal_{g_V}\),
    \item \(\mean_g \geq \mean_{g_V}\)
  \end{myenumi}
  is itself a warped product \(g = \varphi^2 \tilde{g}_M + \D x \otimes \D x\) for some metric \(\tilde{g}_M\) on \(M\) which satisfies \(\scal_{\tilde{g}_M} = \scal_{g_M}\).
  In particular, \(g_V\) is scalar-mean extremal.
  
  If, in addition, the metric \(g_M\) satisfies \(\Ric_{g_M} > 0\), then \(g_V\) is scalar-mean rigid.
\end{cor}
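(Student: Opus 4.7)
The plan is to deduce this corollary as a direct specialization of \cref{thm:goette-semmelmann-band-rigidity} applied to the identity map $\Phi = \mathrm{id}_V \colon (V,g) \to (V, g_V)$, with the roles of $(N, g_N)$ and $(V_0, g_0)$ played by $(M, g_M)$ and $(V, g_V)$ respectively.

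First I would verify the three hypotheses of the theorem. The identity is a band map of degree one. The condition $g \geq g_V$ is precisely the statement that $\mathrm{id}_V$ is $1$-Lipschitz from $(V,g)$ to $(V,g_V)$. The scalar and mean curvature hypotheses are exactly the assumed bounds $\scal_g \geq \scal_{g_V}$ and $\mean_g \geq \mean_{g_V}$, since $\scal_{g_V} \circ \mathrm{id} = \scal_{g_V}$. Hence \cref{thm:goette-semmelmann-band-rigidity} applies, yielding $\scal_g = \scal_{g_V}$ and an isometry of $(V, g)$ with a warped product $(M' \times [t_-, t_+], \varphi^2 g_{M'} + \D x \otimes \D x)$ with $\scal_{g_{M'}} = \scal_{g_M}$, where $M' = x^{-1}(t_0)$. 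Because $\Phi = \mathrm{id}_V$ we have $x = y$ (the projection to the second factor of $V = M \times [t_-, t_+]$), so $M' = M \times \{t_0\}$ is canonically identified with $M$, and the metric $\tilde g_M \coloneqq \varphi(t_0)^{-2} g|_{M \times \{t_0\}}$ gives the desired warped-product representation $g = \varphi^2 \tilde g_M + \D x \otimes \D x$.

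For scalar-mean extremality it remains only to check $\mean_g = \mean_{g_V}$, as $\scal_g = \scal_{g_V}$ was just established. But in any warped product of the form $\varphi^2 \tilde g_M + \D x \otimes \D x$, the mean curvature of the cross-section $M \times \{t_\pm\}$ with respect to the outward (resp.\ inward) normal equals $\pm h(t_\pm)$, a quantity depending only on $\varphi$ at the endpoints, not on $\tilde g_M$. Hence $\mean_g|_{\partial_\pm V} = \pm h(t_\pm) = \mean_{g_V}|_{\partial_\pm V}$.

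For the rigidity statement under $\Ric_{g_M} > 0$, the second assertion of \cref{thm:goette-semmelmann-band-rigidity} says that $\Phi$ is an isometry, which in our case is the tautology that $\mathrm{id}_V \colon (V,g) \to (V, g_V)$ is an isometry, i.e.\ $g = g_V$. I do not anticipate any substantive obstacle: the whole content of the corollary is packaged inside the theorem, and the only minor bookkeeping is the canonical identification of the abstract cross-section $x^{-1}(t_0)$ with $M$, which is immediate because $x = y$ when $\Phi = \mathrm{id}$.
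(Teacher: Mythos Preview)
Your proposal is correct and follows exactly the paper's approach: the paper simply states that restricting \cref{thm:goette-semmelmann-band-rigidity} to the special case where \(\Phi\) is the identity map immediately yields the corollary. Your additional verification that \(\mean_g = \mean_{g_V}\) (needed for scalar-mean extremality) is a useful detail the paper leaves implicit.
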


In particular, the main theorem and corollary of this section are fully applicable to strictly log-concave warped products over even-dimensional spheres.
This corresponds to a fiberwise application of Llarull's result~\cite{LLarull}.
In the following, we single out one important special class of examples, namely annuli in simply-connected space forms.

Indeed, let \(\kappa \in \R\) be fixed and \((M_\kappa, g_\kappa)\) be the \(n\)-dimensional simply connected space form of constant sectional curvature \(\kappa\).
To apply the theorem, we recall the description of \((M_\kappa, g_\kappa)\) as a warped product over the sphere.
Choose a base-point \(p_0 \in M_\kappa\).
Let \(\sn_\kappa\) be the unique solution to the initial value problem \(\varphi'' + \kappa \varphi =0\), \(\varphi(0) = 0\), \(\varphi'(0) = 1\).
Similarly, \(\cs_\kappa\) denotes the unique solution to the same differential equation but with initial values \(\varphi(0) = 1\), \(\varphi'(0) = 0\).
If \(\kappa > 0\), we let \(p_\infty \in M_\kappa\) be the point opposite to \(p_0\) and set \(M_\kappa^\prime \coloneqq M_\kappa \setminus \{p_0, p_\infty\}\).
  If \(\kappa \leq 0\), we let \(M_\kappa^\prime = M_\kappa \setminus \{p_0\}\).
On \(M_\kappa^\prime \cong \Sphere^{n-1} \times (0, t_\infty)\) the metric \(g_\kappa\) appears as the warped product 
\[g_\kappa = \sn_\kappa^2 g_{\Sphere^{n-1}} + \D x \otimes \D x,\]
where \(t_\infty\) is chosen such that \(I \coloneqq (0, t_\infty)\) is a maximal interval on which \(\sn_\kappa\) remains positive.
This means \(t_\infty = +\infty\) for \(\kappa \leq 0\) and \(t_\infty = \pi / \sqrt{\kappa}\) for \(\kappa > 0\).
Moreover, \(\log(\sn_\kappa)'' = -\tfrac{1}{\sn_\kappa^2} < 0\), that is, \(\sn_\kappa\) is strictly logarithmically concave.
This means that \cref{thm:goette-semmelmann-band-rigidity} is applicable to the metric \(g_\kappa\).
Given \(0 < t_- < t_+ < t_\infty\), we consider the annulus 
\[\Annulus_{t_-, t_+} \coloneqq \{ p \in M_\kappa \mid t_- \leq d_{g_\kappa}(p,p_0) \leq t_+ \} \subset M_\kappa.\] 
We will view \(\Annulus_{t_-, t_+} \) as a band with \(\partial_{\pm} \Annulus_{t_-, t_+} = \Sphere_{t_\pm} \coloneqq \{p \in M_\kappa \mid d_{g_\kappa}(p, p_0) = t_\pm\}\).
Furthermore, we set \(\ct_\kappa = \cs_\kappa / \sn_\kappa\).
Then the mean curvature of \(\partial_{\pm} \Annulus_{t_-, t_+}\) is equal to  \(\pm \ct_\kappa(t_\pm)\).
We thus deduce the following consequences of \cref{thm:goette-semmelmann-band-rigidity}.

\begin{cor} \label{annulus-rigidity}
  Let \(n \geq 3\) be odd and \((M_\kappa, g_\kappa)\) the \(n\)-dimensional simply connected space form of constant sectional curvature \(\kappa \in \R\).
  Let \(0 < t_- < t_+ < t_\infty\) and consider an annulus \(\Annulus_{t_-, t_+}\) as above.
  Let \((V,g)\) be an \(n\)-dimensional spin band and \(\Phi \colon V \to \Annulus_{t_-, t_+}\) be a smooth band map such that
  \begin{myenumi}
    \item \(\Phi\) is \(1\)-Lipschitz and of non-zero degree,
    \item \(\scal_g \geq \scal_{g_\kappa} = \kappa n(n-1)\),
    \item \(\mean_g|_{\partial_\pm V} \geq \mean_{g_\kappa}|_{\partial_\pm \Annulus_{t_-, t_+}} = \pm \ct_\kappa(t_\pm)\).
  \end{myenumi}
  Then \(\Phi\) is an isometry.
\end{cor}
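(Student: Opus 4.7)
The plan is to reduce the corollary directly to \cref{thm:goette-semmelmann-band-rigidity} by recognizing the annulus as a warped product over the sphere and verifying that the sphere satisfies the geometric hypotheses of that theorem. Concretely, I would take $N = \Sphere^{n-1}$ with the round metric $g_N = g_{\Sphere^{n-1}}$ and $\varphi = \sn_\kappa$ on the interval $[t_-, t_+] \subset (0, t_\infty)$. As noted in the paragraphs preceding the corollary, on $M_\kappa^\prime$ one has the warped product decomposition $g_\kappa = \sn_\kappa^2 g_{\Sphere^{n-1}} + \D x \otimes \D x$, so restricting to the annulus yields an isometry $\Annulus_{t_-, t_+} \cong (\Sphere^{n-1} \times [t_-,t_+], \sn_\kappa^2 g_{\Sphere^{n-1}} + \D x \otimes \D x)$ as bands. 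Also recalled before the corollary, the function $\sn_\kappa$ is strictly logarithmically concave on $(0, t_\infty)$, and the mean curvature of $\partial_\pm \Annulus_{t_-, t_+}$ with respect to the inward normal is exactly $\pm \ct_\kappa(t_\pm) = \pm \varphi'(t_\pm)/\varphi(t_\pm)$.

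Next I would verify that $(\Sphere^{n-1}, g_{\Sphere^{n-1}})$ satisfies the remaining assumptions of \cref{thm:goette-semmelmann-band-rigidity}. Since $n$ is odd, $n-1$ is even and so the Euler characteristic $\chi(\Sphere^{n-1}) = 2 \neq 0$. The round sphere has constant positive sectional curvature, which in particular implies that its Riemannian curvature operator is (strictly) positive, hence non-negative, and that $\Ric_{g_{\Sphere^{n-1}}} > 0$. This last condition is precisely the additional hypothesis needed to invoke the rigidity (rather than merely extremality) conclusion.

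With these verifications, \cref{thm:goette-semmelmann-band-rigidity} applies to the band map $\Phi \colon (V, g) \to \Annulus_{t_-, t_+}$, and the final clause of that theorem gives directly that $\Phi$ is an isometry. I expect no genuine obstacle here: the entire argument is a transcription of the hypotheses, and the only step worth being careful about is the sign- and normalization convention for the mean curvature, which must be checked to match the formula $\pm h(t_\pm)$ appearing in \cref{thm:goette-semmelmann-band-rigidity} with the convention~\labelcref{eq:mean_convention} used throughout the paper.
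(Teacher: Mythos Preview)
Your proposal is correct and matches the paper's approach exactly: the paper sets up the warped product description of the annulus with $\varphi = \sn_\kappa$ in the paragraphs preceding the corollary and then states the corollary as a direct consequence of \cref{thm:goette-semmelmann-band-rigidity}, which is precisely what you do. Your remark about checking the mean curvature convention is well taken but the verification is already contained in the discussion before the corollary.
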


\begin{cor} \label{annulus-extremal-rigidity}
  Let \(n \geq 3\) be odd and \((M_\kappa, g_\kappa)\) the \(n\)-dimensional simply connected space form of constant sectional curvature \(\kappa \in \R\).
  Let \(0 < t_- < t_+ < t_\infty\) and consider the annulus 
  \[\Annulus_{t_-, t_+} \coloneqq \{ p \in M_\kappa \mid t_- \leq d_{g_\kappa}(p,p_0) \leq t_+ \}\] 
  around some base-point \(p_0 \in M_\kappa\).
  Then any Riemannian metric \(g\) on \(\Annulus_{t_-, t_+}\) which satisfies
  \begin{myenumi}
    \item \(g \geq g_\kappa\),
    \item \(\scal_g \geq \scal_{g_\kappa} = \kappa n(n-1)\),
    \item \(\mean_g|_{\Sphere_{t_\pm}} \geq \mean_{g_\kappa}|_{\Sphere_{t_\pm}} = \pm \ct_\kappa(t_\pm)\)
  \end{myenumi}
  is equal to \(g_\kappa\).
  That is, \(g_\kappa\) is scalar-mean rigid on \(\Annulus_{t_-, t_+}\).
\end{cor}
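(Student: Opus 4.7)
The plan is to recognize this corollary as a direct specialization of \cref{annulus-rigidity} by taking the band map to be the identity. I would regard \((\Annulus_{t_-, t_+}, g)\) as the spin band \(V\) with boundary splitting \(\partial_\pm V = \Sphere_{t_\pm}\), and consider \(\Phi = \id \colon (\Annulus_{t_-, t_+}, g) \to (\Annulus_{t_-, t_+}, g_\kappa)\).

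Next I would verify the hypotheses of \cref{annulus-rigidity} in sequence. The identity is a band map of degree \(1\), so that part is immediate. To check that \(\Phi\) is \(1\)-Lipschitz, I would observe that the metric comparison \(g \geq g_\kappa\) means \(|v|_{g_\kappa} \leq |v|_{g}\) for every tangent vector \(v\); hence the operator norm of \(\T_p \id\) with respect to \(g\) on the source and \(g_\kappa\) on the target is at most \(1\) at every point \(p\), so \(\id\) does not increase lengths of smooth curves and is therefore \(1\)-Lipschitz for the induced Riemannian distances. The remaining scalar and mean curvature inequalities required by \cref{annulus-rigidity} are literal restatements of assumptions (ii) and (iii) of the present statement. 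Applying \cref{annulus-rigidity} then forces \(\id\) to be an isometry, which is precisely \(g = g_\kappa\).

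The main obstacle is essentially nonexistent: the real content lies in \cref{thm:goette-semmelmann-band-rigidity} and its specialization \cref{annulus-rigidity}, and this corollary is just the special case obtained by choosing the band map to be the identity. The only point requiring even a moment's thought is the translation of the metric comparison \(g \geq g_\kappa\) into the Lipschitz condition, which is a routine observation about operator norms at the level of tangent spaces.
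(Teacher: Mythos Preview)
Your proposal is correct and matches the paper's approach: both corollaries are stated as immediate consequences of \cref{thm:goette-semmelmann-band-rigidity} (via \cref{annulus-rigidity}) once one takes $\Phi = \id$, and the only point to note is precisely that $g \geq g_\kappa$ makes the identity $1$-Lipschitz.
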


\printbibliography
\end{document}